\newtheorem{theorem}{Theorem}[section]
\newtheorem{proposition}[theorem]{Proposition}
\newtheorem{corollary}[theorem]{Corollary}
\newtheorem{lemma}[theorem]{Lemma}
\theoremstyle{definition}
\newtheorem{remark}[theorem]{Remark}
\newtheorem{definition}[theorem]{Definition}
\def\div{\mathop{\mathrm{div}}\nolimits}
\def\!{\mathop{\mathrm{!}}}
\newcommand{\Keywords}[1]{\par\indent
{\small{\textbf{Keywords.} \/} #1}}
\newcommand{\subjclass}[1]{\par\indent{ \textbf{AMS subject classification.} #1}}
\DeclareMathOperator{\Hess}{Hess}
\DeclareMathOperator{\cov}{cov}
\DeclareMathOperator{\Ent}{Ent}
\def\idx{\mathrm{id_X}}
\def\R{\mathbf{ R}}
\def\T{\mathbb{T}}
\newlength{\boxwidth}
\title{The two-scale approach to hydrodynamic limits for
non-reversible dynamics}
\author{Manh Hong Duong\footnote{Mathematics Institute, University of Warwick, UK. m.h.duong@warwick.ac.uk}\hspace{2mm}  and Max Fathi\footnote{LPMA, University Paris 6, France. max.fathi@etu.upmc.fr}}
\date{\today}
\begin{document}
\maketitle

\begin{abstract}
In \cite{GOVW09}, a new method to study hydrodynamic limits, called the two-scale approach, was developed for reversible dynamics. In this work, we generalize this method to a family of non-reversible dynamics. As an application, we obtain quantitative rates of convergence to the hydrodynamic limit for a weakly asymmetric version of the Ginzburg-Landau model endowed with Kawasaki dynamics. These results also imply local Gibbs behaviour, following a method of \cite{Fat13}.
\end{abstract}
\Keywords Two-scale approach, hydrodynamic limits, non-reversible dynamics.
\subjclass 60G99, 35K99.

\section{Introduction}

In this work, we are interested in generalizing the results of \cite{GOVW09} on hydrodynamic limits to the case of weakly asymmetric interacting spin systems. We obtain quantitative rates of convergence to the hydrodynamic limit for such dynamics. Our main contribution is to show that we can control the effects of the antisymmetric component of the dynamic.

A typical result of convergence to the hydrodynamic limit consists in proving that, under a suitable time-space scaling and for nice initial conditions, a random system with a large number of particles behaves like a deterministic object, given as the solution of a partial differential equation. We refer to \cite{KL} for an overview of the field. 

In \cite{GOVW09}, a new method to study such problems, called the two-scale approach, was developed. It consists in establishing estimates in Wasserstein distance between the distribution of the system and a well-chosen deterministic macroscopic state, given as the solution of a differential equation. The main elements are a coarse-graining argument and a logarithmic Sobolev inequality. It was applied to reversible dynamics of the form 
$$dX_t = -A\nabla H(X_t)dt + \sqrt{2A}dW_t,$$
on some Euclidean space, where $A$ is a symmetric positive definite matrix, $H$ is the Hamiltonian and $W$ is a Wiener process. In the case where $A$ and $H$ correspond to the Ginzburg-Landau model endowed with Kawasaki dynamics, they obtained scaling limits of the form
$$\frac{\partial \rho}{\partial t} = \frac{\partial^2}{\partial \theta^2} \varphi'(\rho).$$

In this work, we add an extra term to the previous dynamic, and study
$$dX_t = -A\nabla H(X_t)dt - J\nabla H(X_t)dt + \sqrt{2A}dW_t,$$
where $J$ is an antisymmetric matrix. This extra term makes the dynamic non-reversible, but does not modify the invariant measure. For the Ginzburg-Landau model, when $J$ is a discrete differentiation, we obtain a scaling limit of the form
$$\frac{\partial \rho}{\partial t} = \frac{\partial^2}{\partial \theta^2} \varphi'(\rho) +  \frac{\partial}{\partial \theta} \varphi'(\rho).$$

Our method is restricted to the case where the square of the antisymmetric part $-J^2$ is controlled by $A$ (in the sense of symmetric matrices). This is because if the antisymmetric component becomes dominant in the scaling limit, we would expect the limiting PDE to be hyperbolic (rather than parabolic), estimates in Wasserstein distances would not be adapted, and a different metric would be required.

These estimates in Wasserstein distance also allow us to study local Gibbs behaviour (which is a stronger form of convergence) by using an interpolation inequality, following a method developed in \cite{Fat13}. Additionally, we obtain quantitative rates of convergence for the microscopic free energy to its scaling limit.

In general, hydrodynamic limit results for non-reversible dynamics are significantly harder to prove than in reversible situations. However, it turns out that the two-scale approach can be generalized under natural assumptions with fairly elementary arguments. The additional arguments for our main abstract result mostly rely on the Cauchy-Schwartz inequality and Gronwall-type estimates. We obtain easily-checkable and natural conditions for convergence to hold. Our method is illustrated with an application to a weakly asymmetric version of Kawasaki dynamics for a continuous spin system.

The plan of the paper is as follows: in Section 2, we present the framework and our main results. Section 3 contains the proofs of our results in the abstract setting. In Section 4, we give the proofs of convergence to the hydrodynamic limit for the Ginzburg-Landau model endowed with a weakly asymmetric version of Kawasaki dynamics.

\vspace{3mm}

{\Large{\textbf{Notations}}}

\begin{itemize}
\item $C$ denotes a positive constant, which may vary from line to line, or even within a line.

\item $\nabla$ is the gradient, $\Hess$ stands for Hessian, $| \cdot |$ is the norm and $\langle \cdot, \cdot \rangle$ is an inner product. If necessary, a subscript will indicate the space on which these are taken.

\item $P^t$ is the adjoint of the operator $P$. 

\item $\Phi$ is the function defined by $\Phi(x) := x\log x$ on $\R_+$.

\item $\Ent_{\mu}(f) = \int{f (\log f) \mu} - \left(\int{f \mu} \right) \log \left( \int{f \mu} \right)$ is the entropy of the positive function $f$ with respect to the measure $\mu$.

\item $Z$ is a constant enforcing unit mass for a probability measure.
\end{itemize}

\section{Framework and main results}

\subsection{Abstract setting}
Let $X,Y$ be two Euclidean (or affine) spaces with $X\subset \R^N, Y\subset \R^M$ for some integers $N$ and $M$.  We think of $X$ as the microscopic space and $Y$ as the macroscopic space. $N$ and $M$ can then be thought of as the size of the microscopic and macroscopic data respectively. Let $A$ and $J$ be respectively a positive definite symmetric and an anti-symmetric linear operator on $X$. Let $H\colon X\rightarrow \R$ be a given function. We consider the stochastic dynamics on $X$ that is given by the following stochastic differential equation (SDE)  
\begin{equation}
dX_t=-A\nabla H(X_t)\,dt-J\nabla H(X_t)\,dt+\sqrt{2A}\,dW_t,
\end{equation}
where $W_t$ is a Wiener process, and $\sqrt{A}$ is the square root of the matrix $A$. When $J \neq 0$, this is a non-reversible process, and the Fokker-Planck equation associated to this SDE is 
\begin{equation}
\label{eq: micro eqn}
\partial_t(f\mu)=\div[\mu (A+J)\nabla f],
\end{equation}
where $\mu$ is the invariant measure of the dynamics, which is
$$\mu(dx) := \frac{1}{Z}\exp(-H(x))dx$$
and $f(t,\cdot )$ is the density of the law of $X_t$ with respect to $\mu$. Note that the addition of $J$ does not change the invariant measure. As far as we know, every currently used method for proving hydrodynamic limit results relies on explicit knowledge of the invariant measure. 

In the application we have in mind, which we shall present in the next section, $A$ will be the discrete Laplacian, and $J$ the discrete derivation.

We now introduce an abstract framework for the notion of coarse-graining operator. Let $P\colon X\rightarrow Y$ be a linear operator such that
\begin{equation}
\label{eq: operator P}
NPP^t=\mathrm{id}_Y,
\end{equation}
where $P^t$ is the adjoint operator of $P$. We think of $y = Px$ as the macroscopic state associated to the microscopic state $x$. This operator induces a decomposition of the invariant measure into a macroscopic component and a fluctuation component.  Let $\overline{\mu}(dy)=P_\sharp \mu$ be the push-forward of $\mu$ under the operator $P$ and $\mu(dx|y)$ be the conditional measure of $\mu$ given $Px=y$, i.e., for each $y$, $\mu(dx|y)$ is a probability measure on $X$ and satisfies that for any test function $g$
\begin{equation}
\label{eq: disintegration}
\int_X g(x) d\mu(x)=\int_Y \Big(\int_{Px=y}g(x)\mu(dx|y)\Big)\overline{\mu}(dy).
\end{equation}

Applying the technique in~\cite{GOVW09}, we show that under certain conditions, the macroscopic profile $y=Px$, with law given by $\overline{f}(t,y)=\int_{Px=y}f(t,x)\mu(dx)$, is close to the solution of the following differential equation
\begin{equation}
\label{eq: macro eqn}
\frac{d\eta}{dt}=-(\overline{A}+\overline{J})\nabla \overline{H}(\eta(t)).
\end{equation}

In this equation, $\overline{A}$ is a symmetric, positive definite operator on $Y$ and $\overline{J}$ is another operator on $Y$, defined by
\begin{equation}
\label{eq: macro A}
\overline{A}^{-1}=PA^{-1}NP^t, \quad \overline{J}=\overline{A}PA^{-1}NJP^t,
\end{equation}
and $\overline{H}\colon Y\rightarrow \R$ is the macroscopic Hamiltonian that satisfies
\begin{equation}
\label{eq: macro H}
\overline{\mu}(dy)=\exp(-N\overline{H}(y))\,dy.
\end{equation}
$\overline{A}$ and $\overline{J}$ can be thought of as macroscopic versions of $A$ and $J$.

In order to state the assumptions, we need to recall the definition of the Logarithmic Sobolev inequality (LSI). A probability measure $\nu\in \mathcal{P}(X)$ is said to satisfy an $\mathrm{LSI}$ with constant $\rho>0$ (abbreviation $\mathrm{LSI}(\rho)$) if, for any locally Lipschitz, nonnegative function $f\in L^1(\nu)$,  
\[
\int \Phi(f)\,d\nu-\Phi\left(\int f \,d\nu\right)\leq \frac{1}{2\rho}\int \frac{|\nabla f|^2}{f}\,d\nu.
\]
\textbf{Assumptions:} Throughout the paper, we assume that
\begin{enumerate}[(i)]
\item $\kappa\colonequals\max_{x\in X}\{\langle\Hess H(x)\cdot u,v\rangle, u\in \mathrm{Ran}(NP^tP),v\in\mathrm{Ran}(\idx - NP^tP),|u|=|v|=1\}<\infty$;
\item There is $\rho>0$ such that $\mu(dx|y)$ satisfies $\mathrm{LSI}(\rho)$ for all $y$;
\item There exist $\lambda,\Lambda>0$ such that $\lambda\mathrm{Id}\leq \mathrm{Hess}\overline{H}\leq \Lambda\mathrm{Id}$;\label{asumpt: bounded of Hess macro H}
\item There is $\alpha>0$ such that $\int_X|x|^2f\mu(dx)\leq \alpha N$;
\item There is $\beta>0$ such that $\inf_{y\in Y} \bar{H}(y)\geq -\beta$;
\item There is $\gamma>0$ such that for all $x\in X$,
\[
|(\idx-NP^tP)x|^2\leq \gamma M^{-2}\langle x,Ax\rangle_X;
\]
\item There are constants $C_1$ and $C_2$ such that the initial datum satisfy
\[
\int \Phi(f(0,x))\mu(dx)\leq C_1N\quad \text{and}\quad \bar{H}(\eta_0)\leq C_2;
\]
\item There is a $\tau>0$ such that $A\geq \tau Id$;\label{asumpt: bound below of A}
\item $-J^2\leq cA$\label{asumpt: weak anti-symmetric};
\item $J$ and $A$ commute.
\end{enumerate}

Under these assumptions, we have the following bound on the (scaled) Wasserstein distance between $f\mu$ and $\delta_{NP^t\eta}$ as well as the time-integrated Wasserstein distance between $\overline{f}\overline{\mu}$ and $\delta_{\eta}$.

\begin{theorem}
\label{theo: abstract theorem}
Let $\mu(dx)=Z^{-1}\exp(-H(x))\,dx$ be a probability measure on $X$, and let $P\colon X\rightarrow Y$ satisfy~\eqref{eq: operator P}. Let $A\colon X\rightarrow X$ be a symmetric, positive definite operator, and $f(t,x)$ and $\eta(t)$ be the solutions of~\eqref{eq: micro eqn} and~\eqref{eq: macro eqn}, with initial data $f(t,\cdot)$ and $\eta_0$ respectively. Suppose that the assumptions above hold. Define
\begin{equation}
\label{def: theta}
\Theta(t)\colonequals \frac{1}{2N}\int_X(x-NP^t\eta(t))\cdot A^{-1}(x-NP^t\eta(t))f(t,x)\mu(dx).
\end{equation}
Then for any $T>0$, we have 
\[
\max\Big\{\sup_{0\leq t\leq T}\Theta(t),\frac{\lambda}{8}\int_0^T\left(\int_Y|y-\eta(t)|_Y^2\overline{f}(t,y)\overline{\mu}(dy)\right)\,dt\Big\}\leq e^{\frac{8c\Lambda^2}{\lambda}T}\Big[\Theta(0)+E(T,M,N)\Big],
\]
where $E(T,M,N)\rightarrow 0$ as $N\uparrow \infty, M\uparrow \infty, \frac{N}{M}\uparrow \infty$. More precisely,
\begin{multline*}
E(T,M,N)=T\left(\frac{M}{N}\right)+\frac{4c\gamma \Lambda^2T}{\lambda}\left(\alpha+\frac{2C_1}{\hat{\rho}}\right)\frac{1}{M}+C_1\left(\frac{\gamma \kappa^2}{2\lambda\rho^2}+\frac{2c\gamma\kappa^2}{\tau\lambda\rho^2}+\frac{4\gamma c}{\lambda \tau}\right)\frac{1}{M^2}
\\\qquad+\sqrt{2T\gamma}\left(\alpha+\frac{2C_1}{\hat{\rho}}\right)^\frac{1}{2}\left\{\left(1+\sqrt{\frac{c}{\tau}}
+\frac{\sqrt{2c\gamma}}{M}\right)\sqrt{C_1}\right.
\\ \left.\hspace{15mm}+\sqrt{2}\left(1+\sqrt{\frac{c}{\tau}}\right)(H(\eta_0)-H(\eta_T))+CT(1+e^{CT}\overline{H}(\eta_0))^\frac{1}{2}\right\}\frac{1}{M},
\end{multline*}
where 
$$
\hat{\rho}:=\frac{1}{2}\left(\rho+\lambda+\frac{\kappa^2}{\rho}-\sqrt{\left(\rho+\lambda+\frac{\kappa^2}{\rho}\right)^2-4\rho\lambda}\right).
$$
\end{theorem}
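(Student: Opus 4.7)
\medskip\noindent\textbf{Proof plan.} The plan is to differentiate the functional $\Theta(t)$, derive a Gronwall-type differential inequality
\begin{equation*}
\dot\Theta(t)+\frac{\lambda}{8}\int_Y|y-\eta(t)|_Y^2\,\overline{f}(t,y)\overline{\mu}(dy)\leq \frac{8c\Lambda^2}{\lambda}\Theta(t)+E'(t),
\end{equation*}
and integrate it: Gronwall's lemma together with a bound $\int_0^T E'(t)\,dt\leq E(T,M,N)$ then yields the stated estimate, in which the Gronwall prefactor $e^{8c\Lambda^2 T/\lambda}$ is produced exactly by the residual $J$-contribution.

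Using the Fokker-Planck equation~\eqref{eq: micro eqn}, integration by parts, the macroscopic ODE $\dot\eta=-(\overline{A}+\overline{J})\nabla\overline{H}(\eta)$, and the commutation of $A$ and $J$ (assumption~(x)), I would first split $\dot\Theta(t)$ into a ``reversible'' piece (those terms involving only $A$) and a ``non-reversible'' piece (those involving $J$). The identity $NPP^t=\mathrm{id}_Y$ makes the macroscopic drifts on the two sides match up to coarse-grained quantities, so that the reversible piece can be handled exactly as in~\cite{GOVW09}: one decomposes $x-NP^t\eta$ along $\mathrm{Ran}(NP^tP)$ and $\mathrm{Ran}(\idx-NP^tP)$, couples the two components via assumption~(i) on the mixed Hessian, invokes the conditional LSI (assumption~(ii)) through an Otto-Villani type estimate to control the orthogonal fluctuations, and finally uses the $\lambda$-convexity of $\overline H$ (assumption~(iii)) to extract the good macroscopic term $-\frac{\lambda}{4}\int|y-\eta|^2\overline f\overline\mu$. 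Assumption~(vi) provides the $M^{-2}$ prefactor in front of the resulting errors, while assumptions~(iv), (v) and (vii) give the moment and entropy bounds needed to control the initial data contributions.

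The main obstacle is the non-reversible piece. The key observation is that, since $A$ and $J$ commute, $\|A^{-1/2}Jv\|^2=\langle -A^{-1}J^2 v,v\rangle\leq c|v|^2$ by assumption~(ix), so $A^{-1/2}J$ has operator norm at most $\sqrt c$. Each antisymmetric cross term can therefore be estimated by Cauchy-Schwarz in the $A^{-1}$-metric, bounded by $\sqrt{\Theta(t)}$ times either a macroscopic factor of the form $\bigl(\int|y-\eta|^2\overline f\overline\mu\bigr)^{1/2}$ or a pure error term. Young's inequality with suitable weights absorbs the macroscopic factor into half of the good term $\frac{\lambda}{8}\int|y-\eta|^2\overline f\overline\mu$ produced in the reversible step, and leaves a residual proportional to $\Theta(t)$ with constant $8c\Lambda^2/\lambda$; here $\Lambda^2/\lambda$ arises when balancing a $J$-contribution against the $\lambda$-convex macroscopic gradient term through assumption~(iii), while $c$ comes from the operator-norm bound. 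The remaining $J$-errors, which carry powers of $\sqrt{c/\tau}$ from assumption~(viii), are controlled using assumption~(iv) and the monotonicity $\frac{d}{dt}\overline H(\eta(t))=-\langle\nabla\overline H,\overline A\nabla\overline H\rangle\leq 0$, which is what produces the factor $\overline H(\eta_0)-\overline H(\eta_T)$ in $E(T,M,N)$. The delicate part is that the $J$-contributions are sign-indefinite, so rather than exploiting a dissipation mechanism one has to carefully split each cross term between the dissipation generated by $A$ and the Gronwall factor, and simultaneously keep track of the scalings in $M$ and $N$ to ensure $E(T,M,N)\to 0$ under $N,M\uparrow\infty$ with $N/M\uparrow\infty$.
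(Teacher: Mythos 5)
Your overall skeleton matches the paper's: differentiate $\Theta(t)$, split into a symmetric (GOVW09) piece and a $J$-piece, absorb the $J$-cross-terms into both the macroscopic good term and a Gronwall prefactor, then integrate. The observation that assumptions (ix) and (x) give $\|A^{-1/2}J\|_{\mathrm{op}}\le\sqrt c$ is a clean encapsulation of what the paper's inequalities \eqref{eq: ineq1}--\eqref{eq: ineq2} do.

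However, there is a genuine gap in how you bound the macroscopic Dirichlet form $\int_0^T\langle\overline A\nabla\overline H(\eta),\nabla\overline H(\eta)\rangle\,dt$. You assert the monotonicity identity $\frac{d}{dt}\overline H(\eta(t))=-\langle\overline A\nabla\overline H(\eta),\nabla\overline H(\eta)\rangle\le 0$, but this is false in the present non-reversible setting. The macroscopic ODE is $\dot\eta=-(\overline A+\overline J)\nabla\overline H(\eta)$, and $\overline J=\overline A\,PJA^{-1}NP^t$ is a product of a symmetric and an antisymmetric operator, hence \emph{not} antisymmetric; thus $\langle\overline J\nabla\overline H,\nabla\overline H\rangle$ does not vanish. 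Consequently
\[
\frac{d}{dt}\overline H(\eta)=-\langle\overline A\nabla\overline H,\nabla\overline H\rangle-\langle\overline A\,PJA^{-1}NP^t\nabla\overline H,\nabla\overline H\rangle,
\]
and the second term is sign-indefinite. The paper handles it by Cauchy--Schwarz in the $\overline A$-metric, \eqref{eq: ineq2}, and assumption (iii) to get a differential inequality $\frac{d}{dt}\overline H(\eta)+\frac12\langle\overline A\nabla\overline H,\nabla\overline H\rangle\le C(\overline H(\eta)+1)$, whose Gronwall estimate produces the term $CT(1+e^{CT}\overline H(\eta_0))$ appearing in $E(T,M,N)$. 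Your proposal only produces $\overline H(\eta_0)-\overline H(\eta_T)$ and entirely omits this extra term, so it cannot reproduce the stated form of $E(T,M,N)$, and more fundamentally it relies on a dissipation of $\overline H$ along $\eta$ that does not hold once $J\neq 0$.

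A secondary point: assumption (x) enters not only through the operator-norm bound on $A^{-1/2}J$ but also in ensuring that $PJA^{-1}NP^t$ is genuinely antisymmetric, which is what makes the trace $\mathrm{Tr}(PJA^{-1}NP^t)$ vanish in the treatment of the $J$-analogue of the $M/N$ term. Without commutativity you would pick up an additional contribution proportional to $\mathrm{Tr}(PJA^{-1}NP^t)/N$, which your proposal does not address.
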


\begin{remark}[Remarks on the assumptions] Assumptions (i) to (viii) are collected from~\cite{GOVW09} and~\cite{Fat13}, and where already used in the symmetric case. Assumption (ix) means that the asymmetric effect is controlled by the symmetric one.  Its main use is to rule out situations where the scaling limit is a hyperbolic equation (this would be the case for a continuous analog of the fully asymmetric exclusion process), which the two-scale approach does not seem to handle. Assumption (x) is natural if we think of $J$ and $A$ as finite approximations of first and second derivatives operators, which is the application we have in mind. It could be replaced by an appropriate bound on the symmetric part of $PA^{-1}JNP^t$ (which is the macroscopic component of the commutator between $A^{-1}$ and $J$), and an additional bound of the form $|\mathrm{Tr}(PJA^{-1}NP^t)| \leq CM$. But since our proof is already fairly technical, and we do not have an application in mind that would warrant the greater generality, we decided to just assume that $A$ and $J$ commute, and simplify the proof. All these assumptions will be used in Lemma 3.4 to estimate the time derivative of $\Theta(t)$. In particular, (ii) and (vi) are used to handle the covariance and fluctuations terms respectively. Assumption (iii) is used to control the macroscopic terms, and implies a LSI for the coarse-grained measure $\bar{\mu}$. 
\end{remark}

The hydrodynamic limit is obtained as a consequence.

\begin{corollary} \label{cor: abstract theorem}
Consider a sequence $\{X_{\ell}, Y_{\ell}, P_{\ell}, A_{\ell}, J_{\ell}, \mu_{\ell}, f_{0,\ell}, \eta_{0,\ell}\}_{\ell}$ satisfying the assumptions (i) to (x) with uniform constants $\kappa, \rho, \lambda, \Lambda, \alpha, \beta, \gamma, C_1, C_2$ and $c$.
Suppose that 
$$N_{\ell} \underset{\ell \uparrow \infty}{\longrightarrow} \infty; \hspace{3mm} M_{\ell} \underset{\ell \uparrow \infty}{\longrightarrow} \infty; \hspace{3mm} \frac{M_{\ell}}{N_{\ell}} \underset{\ell \uparrow \infty}{\longrightarrow} 0.$$
Further assume that 
$$\underset{\ell \uparrow \infty}{\lim} \frac{1}{N_{\ell}}\int{(x - N_{\ell}P^t\eta_{0,\ell})\cdot A_{\ell}^{-1}(x - N_{\ell}P^t\eta_{0,\ell}) f_{0,\ell}(x)\mu_{\ell}(dx)} = 0.$$
Then, for any $T > 0$: 

\noindent (a) The microscopic  variables are close to the solution of~\eqref{eq: macro eqn} in the penalized norm induced by $A_{\ell}^{-1}$, uniformly in $t \in [0,T]$: 
$$\underset{\ell \uparrow \infty}{\lim} \hspace{1mm} \underset{0 \leq t \leq T}{\sup} \hspace{1mm} \frac{1}{N_{\ell}}\int{(x - N_{\ell}P^t\eta_{\ell})\cdot A_{\ell}^{-1}(x - N_{\ell}P^t\eta_{\ell}) f_{\ell}(t,x)\mu_{\ell}(dx)} = 0;$$

\noindent (b) The macroscopic variables are  close to the solution of~\eqref{eq: macro eqn} in the strong $L^2(Y)$ norm, in a time-integrated sense: 
$$\underset{\ell \uparrow \infty}{\lim} \hspace{1mm} \int_0^T{\int{|y - \eta_{\ell}|_Y^2\bar{f}(t,y)\bar{\mu}(dy)}dt} = 0.$$
\end{corollary}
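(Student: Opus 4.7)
The corollary is an immediate consequence of Theorem~\ref{theo: abstract theorem} together with the uniformity of the constants. My plan is to apply the theorem to each element of the sequence and to show that both the initial quantity $\Theta_\ell(0)$ and the error function $E(T,M_\ell,N_\ell)$ tend to zero; the two claims (a) and (b) then follow from the fact that they are respectively the first and second maximands in the inequality of Theorem~\ref{theo: abstract theorem}.

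Applying Theorem~\ref{theo: abstract theorem} to the $\ell$-th system gives
\[
\max\Big\{\sup_{0\leq t\leq T}\Theta_\ell(t),\;\tfrac{\lambda}{8}\int_0^T\!\!\int_Y|y-\eta_\ell(t)|_Y^2\,\overline{f}_\ell(t,y)\,\overline{\mu}_\ell(dy)\,dt\Big\}\leq e^{8c\Lambda^2T/\lambda}\bigl[\Theta_\ell(0)+E(T,M_\ell,N_\ell)\bigr],
\]
where the prefactor is bounded by a constant depending only on $T$ since $c,\Lambda,\lambda$ are uniform in $\ell$. The hypothesis on the initial data is precisely $\Theta_\ell(0)\to 0$, so it suffices to check $E(T,M_\ell,N_\ell)\to 0$.

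The verification of $E(T,M_\ell,N_\ell)\to 0$ is pure bookkeeping and is the only step where any care is required. Inspecting the explicit formula given in Theorem~\ref{theo: abstract theorem}: the leading term $T(M/N)$ vanishes since $M_\ell/N_\ell\to 0$, and the contributions of order $1/M$ and $1/M^2$ vanish because $M_\ell\to\infty$ and all their prefactors involve only the uniform constants $\alpha,\gamma,c,\Lambda,\lambda,\hat\rho,\kappa,\rho,\tau,C_1$. The only factor not automatically uniform is the curly-bracket term, which involves $\overline{H}(\eta_{0,\ell})$ and the increment $\overline{H}(\eta_{0,\ell})-\overline{H}(\eta_{T,\ell})$; these are controlled using assumption (vii), giving $\overline{H}(\eta_{0,\ell})\leq C_2$, and assumption (v), giving $\overline{H}(\eta_{T,\ell})\geq-\beta$, so that the increment is at most $C_2+\beta$ and the whole bracket is uniformly bounded in $\ell$. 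Combined with its $1/M$ prefactor, this last contribution also vanishes, so $E(T,M_\ell,N_\ell)\to 0$. Conclusion (a) is the vanishing of the first maximand and (b) follows from the vanishing of the second after dividing by the uniform positive constant $\lambda/8$; no analytical ideas beyond those already used in Theorem~\ref{theo: abstract theorem} enter, and the principal (minor) obstacle is simply to track that every prefactor in $E$ is controlled uniformly along the sequence.
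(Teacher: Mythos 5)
Your proof is correct and takes the same route as the paper, which treats the corollary as an immediate consequence of Theorem~\ref{theo: abstract theorem} and gives no written argument. Your proposal supplies the bookkeeping the paper leaves implicit, in particular the observation that assumptions (v) and (vii) give a uniform-in-$\ell$ bound on $\overline{H}(\eta_{0,\ell})$ and on the increment $\overline{H}(\eta_{0,\ell})-\overline{H}(\eta_{T,\ell})$, so that the curly-bracket factor in $E(T,M_\ell,N_\ell)$ is uniformly controlled and its $1/M_\ell$ prefactor forces it to vanish.
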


Another topic of interest is whether the data behaves like a local Gibbs state. 

\begin{definition}
The local Gibbs state with macroscopic profile $\eta \in Y$ is the probability measure on $X$ whose density with respect to $\mu$ is given by
$$G(x)\mu(dx) := \frac{1}{Z}\exp(NP^t\nabla \bar{H}(\eta) \cdot x)\mu(dx).$$
\end{definition}

Such a probability measure is close (in Wasserstein distance) to the associated macroscopic profile $\eta$.

In \cite{Yau}, Yau showed that, if the initial data is close (in the sense of relative entropy) to a local Gibbs state, then this also holds at any positive time, for a time-dependent local Gibbs state. Since closeness in relative entropy is stronger (in the current setting) than closeness in Wasserstein distance (this is a consequence of Talagrand's inequality, which is implied by the LSI, see for example \cite{OV}), the kind of results obtained with Yau's method are stronger than those of the previous Corollary, but require a stronger assumption on the initial data.

In \cite{Kos}, it was shown that convergence in relative entropy actually holds at positive times, even if the initial data only converges in a weaker sense. In \cite{Fat13}, the second author obtained a new proof of this fact in the reversible setting, using the two-scale approach. This method also yields quantitative rates of convergence in relative entropy. Now that we have generalized the two-scale approach to the non-reversible setting, the extension of the results of \cite{Fat13} follows.

\begin{theorem}
\label{theo: abstract entropy}
 Let $G(t,x)$ be the time-dependent local Gibbs state associated to the solution $\eta$ of~\eqref{eq: macro eqn}. Under our assumptions, the following holds : 

(a) The relative entropy with respect to the local Gibbs state is controlled as follows:  
\begin{equation}\int_0^T{\frac{1}{N}\int{\Phi\left(\frac{f(t,x)}{G(t,x)}\right)G(t,x) \mu(dx)}dt} = \text{\Large O} \left(\sqrt{\Theta(0) + \frac{M}{N} + \frac{1}{M}} \right),
\end{equation}
where the actual constants in the bound (which can be made explicit) depend on $T$, $\lambda$, $\Lambda$, $\alpha$, $\gamma$, $\rho$, $\kappa$, $\tau$, $c$, $C_1$ and $C_2$, but not on $M$ and $N$;

(b) The difference between the microscopic free energy and the free energy associated with the macroscopic profile $\eta$ is bounded as follows:
\begin{align}
\int_0^T&{\left|\frac{1}{N}\int{\Phi(f(t,x))\mu(dx)} - \bar{H}(\eta(t))\right|dt} \notag \\
&= \text{\Large O} \left(\sqrt{\Theta(0) + \frac{M}{N} + \frac{1}{M}} \right) \notag \\
&+ \text{\Large O} \left(\frac{M}{N}\right) \times \max\left(\left|\log\left(\frac{\Gamma(Y,|\cdot|_Y)^{2/(M-1)}}{\Lambda N}\right)\right|, \left|\log\left(\frac{\Gamma(Y,|\cdot|_Y)^{2/(M-1)}}{\lambda N}\right)\right|\right),
\end{align}
where $\Gamma(Y,|\cdot|_Y)= \int{\exp(-|y|_Y^2/2)dy}$ is the Gaussian integral on the (affine) space $Y$ with respect to the norm $|\cdot |_Y$.
\end{theorem}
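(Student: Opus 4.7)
The approach is to adapt the method of \cite{Fat13} from the reversible to the non-reversible setting, using Theorem \ref{theo: abstract theorem} as the main input. For part (a), I would differentiate the relative entropy $\mathcal{H}(t) := \int \Phi(f/G)\, G\mu$ in time. The Fokker-Planck equation \eqref{eq: micro eqn} yields, after integration by parts, a negative dissipation term of Fisher-information type in $\nabla(f/G)$ against $A$; the antisymmetric part contributes no pure dissipation since $v^\top J v = 0$, but a cross term coupling $\nabla(f/G)$ with $\nabla \log G = NP^t\nabla\bar{H}(\eta)$ remains, to be controlled via assumption (ix). The time-dependence of $G$ produces a source $\int f\, \partial_t \log G\, \mu$ equal to a covariance between $\dot{\eta} = -(\bar{A}+\bar{J})\nabla\bar{H}(\eta)$ and the centered moment $\int x f\mu - \int x G\mu$, which Cauchy-Schwartz bounds by a multiple of $\sqrt{\Theta(t)}$ times a Fisher-information factor. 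Splitting every source contribution as $\varepsilon \times (\text{dissipation}) + \varepsilon^{-1}\times \Theta(t)$, time-integrating, and invoking both the estimate $\int_0^T \Theta(t)\,dt = O(\Theta(0) + M/N + 1/M)$ from Theorem \ref{theo: abstract theorem} and the standard a priori bound on total dissipation (coming from the free-energy inequality for $\mu$), I would then optimize $\varepsilon$ to produce the advertised square-root rate.

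For part (b), I would decompose
$$\frac{1}{N}\int \Phi(f)\,\mu = \frac{1}{N}\int \Phi(f/G)\, G\mu + \frac{1}{N}\int f \log G\,\mu.$$
The first summand is controlled in time-integrated form by part (a). For the second, substituting $\log G(x) = NP^t\nabla\bar{H}(\eta)\cdot x - \log Z_G$ and applying the coarse-graining identity \eqref{eq: disintegration} rewrites it as $\nabla\bar{H}(\eta)\cdot \int y\, \bar{f}\bar{\mu}(dy) - \frac{1}{N}\log Z_G$. Theorem \ref{theo: abstract theorem} replaces $\int y\, \bar{f}\bar{\mu}$ by $\eta$ up to a time-integrated error of order $\sqrt{\Theta(0) + M/N + 1/M}$. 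For $Z_G = \int \exp\bigl(N(\nabla\bar{H}(\eta)\cdot y - \bar{H}(y))\bigr)dy$, I would apply a Laplace-type expansion: strict convexity of $\bar{H}$ with $\lambda\mathrm{Id}\leq \Hess \bar{H}\leq \Lambda\mathrm{Id}$ ensures the unique critical point is $y^\ast = \eta$, giving leading contribution $\nabla\bar{H}(\eta)\cdot\eta - \bar{H}(\eta)$, and the residual Gaussian integral is sandwiched between the values obtained by freezing $\Hess\bar{H}$ at $\lambda\mathrm{Id}$ and $\Lambda\mathrm{Id}$; these produce precisely the two logarithms involving $\Gamma(Y,|\cdot|_Y)^{2/(M-1)}/(\Lambda N)$ and $/(\lambda N)$ in the stated bound.

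The main obstacle will be handling the non-reversible term $J$ in the computation of $\frac{d}{dt}\mathcal{H}(t)$. While the pure quadratic piece $\langle \nabla(f/G), J\nabla(f/G)\rangle$ vanishes by antisymmetry, the cross terms mixing $\nabla(f/G)$ with the field $NP^t\nabla\bar{H}(\eta)$, together with the $\bar{J}$ part of $\dot\eta$, must be absorbed using assumptions (ix) and (x): commutativity of $A$ and $J$ lets me rewrite the offending $J$-terms through the symmetric operator $A^{-1/2}JA^{-1/2}$, whose operator norm is at most $\sqrt{c}$ thanks to $-J^2\leq cA$. The Cauchy-Schwartz/Gronwall mechanism of the reversible proof of \cite{Fat13} then goes through. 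A secondary technicality in part (b) is making the Laplace expansion rigorous globally on $Y$ from only two-sided Hessian bounds, which is what explains the two-sided $\max$ of logarithms appearing in the final error term.
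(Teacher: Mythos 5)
Your route for part~(a) is fundamentally different from the paper's, and it contains a gap that is precisely the point the paper's proof is built to avoid. Time-differentiating $\mathcal{H}(t) := \Ent_{G(t)\mu}(f(t)\mu)$ is Yau's relative-entropy method. Integrating the differential inequality
\[
\mathcal{H}'(t) \le -(1-\varepsilon)\,\mathcal{I}(t) + \varepsilon^{-1}\Theta(t),
\]
where $\mathcal{I}(t)$ denotes the Fisher information of $f(t)\mu$ relative to $G(t)\mu$, only bounds $\mathcal{H}(T)-\mathcal{H}(0)+\int_0^T \mathcal{I}(t)\,dt$; it does \emph{not} control $\int_0^T\mathcal{H}(t)\,dt$, which is what the statement asks for. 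To pass from $\mathcal{I}$ to $\mathcal{H}$ you would invoke an LSI for $G(t)\mu$, turning this into a Gronwall estimate of the form $\int_0^T\mathcal{H}(t)\,dt \lesssim \mathcal{H}(0) + \int_0^T (\text{source})\,dt$. But under the stated assumptions, only $\Ent_\mu(f_0)\le C_1 N$ is given; $\mathcal{H}(0)=\Ent_{G_0\mu}(f_0\mu)$ can itself be of order $N$, so $\mathcal{H}(0)/N$ does not vanish. This is exactly the known limitation of Yau's method: it needs the initial data to be close to a local Gibbs state in relative entropy, which is \emph{not} assumed here. The whole purpose of Theorem~\ref{theo: abstract entropy} (following Kosygina and \cite{Fat13}) is to obtain relative-entropy convergence at positive times starting from a weaker (Wasserstein-type) initial assumption, so the time-derivative strategy cannot reproduce it.

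The paper's actual argument for (a) is static rather than dynamic. Because $G$ depends on $x$ only through $y=Px$, the relative entropy splits as $\Ent_{G\mu}(f\mu) = \Ent_{\bar G\bar\mu}(\bar f\bar\mu) + \int_Y \Ent_{\mu(dx|y)}(f\mu)\,\bar G\,\bar\mu(dy)$. The fluctuation part is handled by the LSI for $\mu(dx|y)$ together with assumption (vi) and the entropy-production bound \eqref{eq: bound of Fisher term}. The macroscopic part is bounded via the HWI inequality $\Ent_{\bar G\bar\mu}(\bar f\bar\mu) \le W_2(\bar f\bar\mu, \bar G\bar\mu)\sqrt{I_{\bar G\bar\mu}(\bar f\bar\mu)}$, after which Cauchy--Schwarz in time converts the time-integrated Wasserstein bound from Theorem~\ref{theo: abstract theorem} and the time-integrated Fisher information bound into the time-integrated entropy bound. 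The HWI inequality is the missing ingredient: it is what allows one to trade Wasserstein closeness (which we do control) plus a priori boundedness of dissipation for entropy closeness, without ever needing $\mathcal{H}(0)$ to be small. Your ideas about $J$ (the vanishing of $\langle v, Jv\rangle$ and the use of assumptions (ix)--(x) to control the cross terms) are correct and do appear in the control of the dissipation and in Lemma~\ref{lem : bounds entropy production}, but they do not repair the structural problem above. Your sketch of part~(b) via the splitting of $\int\Phi(f)\,\mu$ and a two-sided Laplace expansion for $Z_G$ under the Hessian bounds (iii) is essentially what the cited Proposition 4.1 of \cite{Fat13} does, so that part is conceptually sound, but it still relies on part~(a).
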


\subsection{Application to spin systems}

We now give an application of Theorem \ref{theo: abstract theorem} to a system of interacting continuous spins. We consider collections of $N$ spins, in the space
$$X_{N,m} := \left\{x \in \R^N; \hspace{3mm} \frac{1}{N}\underset{i = 1}{\stackrel{N}{\sum}} \hspace{1mm} x_i = m \right\},$$
which we endow with the usual $\ell^2$ scalar product. The constraint $N^{-1}\sum x_i = m$ corresponds to a constraint of fixed mean spin, that will be preserved by the dynamics.

The application we have in mind is when the matrices $A$ and $J$ are given by the N-dimensional matrices 
\begin{equation} 
\label{matrix: A,J}
A=N^2\begin{pmatrix}
2&-1&&&-1\\
-1&2&-1&&\\
&\ddots&\ddots&\ddots&\\
&&-1&2&-1\\
-1&&&-1&2
\end{pmatrix},\quad J=\frac{N}{2}\begin{pmatrix}
0&1&&&-1\\
-1&0&1&&\\
&\ddots&\ddots&\ddots&\\
&&-1&0&1\\
1&&&-1&0
\end{pmatrix}.
\end{equation}
As in \cite{GOVW09}, let 
\begin{equation}
H(x) := \underset{i = 1}{\stackrel{N}{\sum}} \hspace{1mm} \psi(x_i),
\end{equation}
where $\psi : \R \longrightarrow \R$ satisfies the following assumptions: 

\begin{equation}
\label{assumption_potential}
\psi(x) = \frac{1}{2}x^2 + \delta\psi(x); \hspace{3mm} ||\delta\psi||_{C^2} < \infty.
\end{equation}
This assumption will ensure that (iii) holds.

We consider the dynamic where $A$ and $J$ are given by~\eqref{matrix: A,J} respectively. This corresponds to the system of $N$ stochastic differential equations

$$dX_i(t) = -N^2(2\psi(X_i) - \psi(X_{i+1}) - \psi(X_{i-1}))dt - \frac{N}{2}(\psi(X_{i+1}) - \psi(X_{i-1})dt + N\sqrt{2}(dB^{i+1}_t - dB^i_t).$$
The index $i$ goes from $1$ to $N$, and we impose periodic boundary conditions, that is $N+1 \equiv 1$. 

This is the dynamic studied in \cite{GPV} and \cite{GOVW09}, to which we have added a weak asymmetric perturbation. This model is to the symmetric dynamic what the weakly asymmetric exclusion process is to the simple symmetric exclusion process, i.e., we have added an extra asymmetric term which has a scaling of lower order in $N$.

Following \cite{GOVW09}, the macroscopic space is 
$$Y_{M,m} := \left\{ y \in \R^M; \hspace{3mm} \frac{1}{N}\underset{i = 1}{\stackrel{M}{\sum}} \hspace{1mm} y_i = m \right\},$$
which we endow with the $L^2$ scalar product
$$\langle y, \tilde{y} \rangle_Y := \frac{1}{M}\underset{i = 1}{\stackrel{M}{\sum}} \hspace{1mm} y_i \tilde{y}_i.$$
The coarse graining operator $P$ is defined as 
$$(Px)_i := \frac{1}{K}\underset{j = (i-1)K +1}{\stackrel{iK}{\sum}} \hspace{1mm} x_i,$$
where $K$ is an integer such that $N = KM$. We can think of this coarse-graining operator as taking local averages of the microscopic profile over boxes of size $K$. This operator does satisfy the relation $PNP^t = \text{id}_Y$.

The coarse-grained Hamiltonian is given by
\begin{equation}
\bar{H}(y) = \frac{1}{M}\underset{i = 1}{\stackrel{M}{\sum}} \hspace{1mm} \psi_K(y_i) + \frac{1}{N}\log \bar{Z},
\end{equation}
where
\begin{equation} \label{def: psi K}
\psi_K(m) := -\frac{1}{K}\log \int_{X_{K,m}}{\exp \left(-\underset{j = 1}{\stackrel{K}{\sum}} \hspace{1mm} \psi(x_i) \right)dx}.
\end{equation}
A classical result of large deviations theory (see for example \cite{GOVW09} for a proof) states that $\psi_K$ is close, in a local sense, to the Cramer transform of $\psi$, defined as
\begin{equation}
\label{eq: varphi}
\varphi(m)=\sup_{\sigma\in \R}\left\{\sigma m-\log{\int_\R\exp{(\sigma x-\psi(x))}}dx\right\}.
\end{equation}
More precisely, we have
$$||\psi_K - \varphi ||_{C^2} \underset{K \rightarrow \infty}{\longrightarrow} \hspace{1mm} 0.$$
As a consequence, since $\varphi$ is uniformly convex, and since $\psi_K''$ uniformly converges to  $\varphi''$, $\psi_K$ is uniformly convex as soon as $K$ is large enough. This shall allow us to apply the previous abstract theorem.

Without loss of generality, we shall assume in the sequel that $m = 0$, since it does not play a role in our estimates.

To study the scaling limit, we need to embed our spaces $X_{N,m}$ into a single functional space. To a microscopic profile $x \in X_{N,0}$, we associate the  step function on the torus $\bar{x}$, defined by

\begin{equation} \label{def: bar x}
\bar{x}(\theta) := x_i \hspace{3mm} \forall \theta \in \left[\frac{i-1}{N}, \frac{i}{N}\right).
\end{equation}

We endow the space $L^2(\T)$ with the $H^{-1}$ norm, defined by
$$||w||_{H^{-1}}^2 = \int{g^2d\theta}, \hspace{2mm} g' = w, \hspace{2mm} \int{g\hspace{1mm} d\theta} = 0.$$
The closure of the spaces $X_{N,0}$ for this norm is the usual $H^{-1}$ space of functions of average $0$, which is the dual of the Sobolev space $H^1$ for the $L^2$ norm.

We can now state the hydrodynamic limit result we obtain for this model : 

\begin{theorem}
\label{theo: concrete hydro}
Let $A_{\ell}$ and $J_{\ell}$ be given by~\eqref{matrix: A,J}. Assume that $\psi$ satisfies (\ref{assumption_potential}). Let $f(t,x)$ be a time-dependent probability density on $(X_{N,0}, \mu_{N,0})$ solving~\eqref{eq: micro eqn}, with $f(0,\cdot) = f_0$ such that 
$$\int{f_0 \log f_0 d\mu_{N,0}} \leq CN,$$
for some $C>0$ and 
$$\underset{N \uparrow \infty}{\lim} \hspace{1mm} \int{||\bar{x} - \zeta_0||_{H^{-1}}^2f_0(x)\mu_{N,0}(dx)} = 0,$$
for some initial macroscopic profile $\zeta_0 \in L^2(\T)$. Then, for any $T > 0$, we have
$$\underset{N \uparrow \infty}{\lim} \hspace{1mm} \underset{0\leq t \leq T}{\sup} \hspace{1mm}  \int{||\bar{x} - \zeta(t,\cdot)||_{H^{-1}}^2f(t,x)\mu_{N,0}(dx)} = 0,$$
where $\zeta$ is the unique solution of 
\begin{equation}
\label{eq: limiting eqn}
\begin{cases}
\frac{\partial \zeta}{\partial t}=\frac{\partial^2}{\partial \theta^2}\varphi'(\zeta)+ \frac{\partial}{\partial \theta} \varphi ' (\zeta),\\
\zeta(0,\cdot)=\zeta_0,
\end{cases}
\end{equation}
where $\varphi$ is the Cram\'{e}r transform of $\psi$, defined as (\ref{eq: varphi}).
\end{theorem}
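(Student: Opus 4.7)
The plan is to apply Corollary~\ref{cor: abstract theorem} along the sequence indexed by $N$, choosing a macroscopic scale $M=M(N)$ such that $M\to\infty$ and $N/M\to\infty$ (for instance $M=\lfloor N^{1/2}\rfloor$). This reduces the theorem to three tasks: verifying that assumptions (i)--(x) hold with constants uniform in $N$; showing that the initial weighted variance $\Theta_N(0)$ converges to zero; and identifying the limit of the finite-dimensional macroscopic flow $\eta_N(t)$ with the solution $\zeta$ of the limiting equation~\eqref{eq: limiting eqn}. The abstract conclusion, expressed in the $A_N^{-1}$-weighted norm on $X$, must then be transferred to the $H^{-1}(\T)$ norm via the step-function embedding~\eqref{def: bar x}.

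Assumptions (i)--(viii) are handled essentially as in~\cite{GOVW09} and~\cite{Fat13}. Assumption (i) follows from the boundedness of $\psi''=1+\delta\psi''$; (ii) is a uniform LSI on each conditional measure, obtained by combining Bakry--\'Emery on the Gaussian part of $\psi$ with the Holley--Stroock perturbation principle applied to the $C^2$-bounded term $\delta\psi$; (iii) is a consequence of the $C^2$ convergence $\psi_K\to\varphi$ and the uniform convexity of the Cram\'er transform~\eqref{eq: varphi}; (vi) follows from a block-wise discrete Poincar\'e inequality, yielding $|(\idx-NP^tP)x|^2\le C K^2\sum(x_{i+1}-x_i)^2 = CM^{-2}\langle x,Ax\rangle_X$ since $N=KM$. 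The genuinely new conditions are (ix) and (x). The matrices $A$ and $J$ in~\eqref{matrix: A,J} are both circulant on $\mathbb{Z}/N\mathbb{Z}$, hence simultaneously diagonalized by the discrete Fourier basis, which gives (x) at once. In this basis the eigenvalues of $A$ are $4N^2\sin^2(\pi k/N)$ and those of $-J^2$ are $N^2\sin^2(2\pi k/N)=4N^2\sin^2(\pi k/N)\cos^2(\pi k/N)$, so (ix) holds with $c=1$, and the same computation yields (viii) with $\tau$ equal to the spectral gap $4N^2\sin^2(\pi/N)\to 4\pi^2$ on the mean-zero subspace.

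Next, I would identify the limit of the finite-dimensional macroscopic flow~\eqref{eq: macro eqn}, which here reads $\dot\eta_N=-(\bar A_N+\bar J_N)\nabla\bar H_N(\eta_N)$. Since $A$ and $J$ commute and are translation-invariant on the discrete torus, $\bar A_N$ and $\bar J_N$ are the standard $M$-point discretizations of $-\partial_\theta^2$ and $\partial_\theta$ on $\T$, and $(\nabla\bar H_N)_i=\psi_K'(\eta_i)$ converges to $\varphi'(\zeta)$ in $C^0$ thanks to $\|\psi_K-\varphi\|_{C^2}\to 0$. Using the Lyapunov identity $\frac{d}{dt}\bar H_N(\eta_N)=-\langle\nabla\bar H_N,\bar A_N\nabla\bar H_N\rangle_Y$, in which the contribution of the antisymmetric $\bar J_N$ cancels, together with the bounds coming from (iii) and (vii), I obtain enough a priori control on the step-function extensions of $\eta_N$ to pass to the limit and identify any accumulation point as a weak solution of~\eqref{eq: limiting eqn}; well-posedness of that equation, which rests on the uniform convexity of $\varphi$, then upgrades this to convergence of the whole sequence.

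The final step is to identify $\frac{1}{N}\langle z,A_N^{-1}z\rangle_X$ with $\|\bar z\|_{H^{-1}(\T)}^2$ up to errors vanishing on sets of bounded energy, using that $A_N=-N^2\Delta_N$ discretizes $-\partial_{\theta\theta}$. Applied with $z=x-NP_N^t\eta_N(t)$ and combined with the convergence $\eta_N\to\zeta$ just established, this translates both the initial hypothesis (which becomes $\Theta_N(0)\to 0$) and the conclusion of Corollary~\ref{cor: abstract theorem}(a) into the desired $H^{-1}$ statement, uniformly in $t\in[0,T]$. I expect the main obstacle to be precisely this identification of the macroscopic ODE limit with $\zeta$: the two-scale estimate only controls closeness between the microscopic data and the finite-dimensional $\eta_N$, so the passage $\eta_N\to\zeta$ is a separate compactness-and-uniqueness step that genuinely uses the $C^2$ convergence $\psi_K\to\varphi$ on the macroscopic scale and the well-posedness of~\eqref{eq: limiting eqn}.
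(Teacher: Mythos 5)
Your overall strategy matches the paper's: apply Corollary~\ref{cor: abstract theorem}, verify assumptions (i)--(x) with uniform constants, show that the finite-dimensional macroscopic flow $\eta^\ell$ converges to $\zeta$, and transfer the $A^{-1}$-weighted estimate to $H^{-1}(\T)$ via the step-function embedding. Your verification of (ix) by discrete Fourier diagonalization (comparing the eigenvalues $4N^2\sin^2(\pi k/N)$ of $A$ and $4N^2\sin^2(\pi k/N)\cos^2(\pi k/N)$ of $-J^2$) is a clean alternative to the paper's direct pointwise inequality $\sum(x_{i+1}-x_{i-1})^2\le 2\sum(x_{i+1}-x_i)^2+2\sum(x_i-x_{i-1})^2$; both yield $c=1$. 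Your observation that $\tau=4N^2\sin^2(\pi/N)\to 4\pi^2$ and that $A,J$ commute because they are circulant is fine.

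Two points, however, deserve flagging. First, your assertion that ``the contribution of the antisymmetric $\bar J_N$ cancels'' in the Lyapunov computation should be used with care. In the abstract framework $\bar J=\bar A\,PA^{-1}NJP^t$ is a symmetric operator composed with an antisymmetric one and is \emph{not} antisymmetric in general; the paper's Lemma~\ref{lem : bounds entropy production} therefore controls $\langle\bar J\nabla\bar H,\nabla\bar H\rangle$ via Young's inequality and assumption (ix), producing the Gronwall bound rather than a cancellation. In the specific Ginzburg--Landau application $\bar A$ and $PA^{-1}NJP^t$ are both circulant on $\R^M$ and hence commute, so $\bar J$ is indeed antisymmetric and your identity is correct --- but this extra structure needs to be observed, and the a~priori estimates you then invoke are in any case already supplied in full generality by the abstract lemmas. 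Second, and more substantively, the identification of the macroscopic ODE limit with $\zeta$ is where the paper does genuinely new work that your sketch elides. The weak/variational formulation of the macroscopic equation tests against $PA^{-1}NJP^t g$, and the key technical step is to show that the associated step functions $-\overline{PA^{-1}NJP^t g^\ell}$ converge strongly in $L^2(\T)$ to the mean-zero primitive $G$ of $\bar g$. This is established in the paper by an explicit matrix computation (writing $A=DD^T$, $J=\tfrac12(D^T-D)$, so that $JA^{-1}=\tfrac12(D^{-1}-(D^T)^{-1})$ is a discrete antisymmetric integration operator) --- it is not an immediate consequence of saying ``$\bar J_N$ is a discretization of $\partial_\theta$'', because it is the dual operator $PA^{-1}NJP^t$, acting as a discrete primitive, that enters the variational form. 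Your proposal correctly identifies this convergence step as the main obstacle, but leaves the crucial lemma unproved.
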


We can also use the method of~\cite{Fat13} to study local Gibbs behaviour, and convergence of the relative entropy. 

\begin{theorem}
Under the same assumptions as in Theorem \ref{theo: concrete hydro}, the following holds : 
\begin{equation} \label{ent1}
\int_0^T{\int_{X_N}{\Phi\left(\frac{f_N(t,x)}{G_N(t,x)}\right)G_N(t,x) \mu_N(dx)}dt} \longrightarrow 0,
\end{equation}
where $G_N(t,\cdot)$ is the local Gibbs state given by $\eta_N(t)$. 
As a consequence, we have convergence of the microscopic entropy to the hydrodynamic entropy, in a time-integrated sense : 
\begin{equation} \label{ent2}
\int_0^T{\left|\frac{1}{N}\int{\Phi(f_N(t,x))\mu_N(dx)} - \left(\int_{\mathbb{T}}{\varphi(\zeta(\theta,t))d\theta} - \varphi\left(\int_{\mathbb{T}}{\zeta(t,\theta) d\theta}\right)\right)\right|dt} \underset{N \rightarrow \infty}{\rightarrow} 0.
\end{equation}

Moreover, convergence of $\frac{1}{N}\int{\Phi(f_N(t,x))\mu_N(dx)}$ to $\int_{\mathbb{T}}{\varphi(\zeta(\theta,t))d\theta} - \varphi\left(\int_{\mathbb{T}}{\zeta(t,\theta) d\theta}\right)$ holds uniformly on any time-interval  $[\epsilon, T]$, for any $0 < \epsilon < T$.
\end{theorem}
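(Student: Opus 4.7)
The plan is to reduce everything to the abstract Theorem~\ref{theo: abstract entropy}, whose hypotheses (i)--(x) for the Ginzburg--Landau setup with matrices~\eqref{matrix: A,J} have already been verified (with constants uniform in $N$) in the course of proving Theorem~\ref{theo: concrete hydro}. The only genuinely new work is therefore to translate the abstract output into the concrete statements~\eqref{ent1} and~\eqref{ent2}, and to upgrade time-integrated convergence to uniform convergence on $[\epsilon,T]$.

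First, I would check that the hypothesis $\Theta(0)\to 0$ of Theorem~\ref{theo: abstract entropy} is implied by the assumption $\int\|\bar{x}-\zeta_0\|_{H^{-1}}^2 f_0\,\mu_{N,0}(dx)\to 0$, exactly as in the argument used for Theorem~\ref{theo: concrete hydro}: the penalized norm defined by $A^{-1}$ corresponds (up to lower-order errors controlled via assumption (vi) and the entropy bound $\int f_0\log f_0\, d\mu_{N,0}\leq CN$) to the $H^{-1}$ norm of the associated step functions. Choosing the mesoscopic scale $M = M(N)$ with $M\to\infty$ and $M/N\to 0$ (e.g.\ $M\sim N^{1/2}$) then makes the error term $\Theta(0)+M/N+1/M$ in Theorem~\ref{theo: abstract entropy} tend to $0$, which gives~\eqref{ent1} at once.

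For~\eqref{ent2}, I would apply part (b) of Theorem~\ref{theo: abstract entropy} and identify the limit of $\bar{H}(\eta_N(t))$. The correction term involves $\log\Gamma(Y,|\cdot|_Y)^{2/(M-1)}$: since $|y|_Y^2 = M^{-1}\sum y_i^2$, the Gaussian integral is computable explicitly, $\Gamma(Y,|\cdot|_Y) = (2\pi M)^{(M-1)/2}$ up to constants coming from the affine constraint, so its $2/(M-1)$ power is of order $M$, and the whole correction is of order $(M/N)\log(N/M) \to 0$. Next, using the uniform convergence $\|\psi_K-\varphi\|_{C^2}\to 0$ and the fact that $\eta_N(t)\to\zeta(t,\cdot)$ in $L^2$ (from Theorem~\ref{theo: concrete hydro}), I would pass to the limit in $\bar{H}(\eta_N(t)) = M^{-1}\sum \psi_K(\eta_{N,i}) + N^{-1}\log\bar{Z}$ to obtain $\int_{\mathbb T}\varphi(\zeta)\,d\theta - \varphi(\int \zeta\,d\theta)$, the constant $N^{-1}\log\bar{Z}$ being absorbed by the shift in the definition of the relative entropy versus entropy.

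Finally, to upgrade the time-integrated convergence in~\eqref{ent2} to uniform convergence on $[\epsilon,T]$, I would follow~\cite{Fat13} and show that $t\mapsto N^{-1}\int\Phi(f_N)\,d\mu_N - \bar{H}(\eta_N(t))$ has a time derivative bounded by $Ct^{-1/2}$ or similar for $t\geq \epsilon$: the derivative of the microscopic entropy equals $-N^{-1}\int \langle\nabla f_N/\sqrt{f_N},A\nabla f_N/\sqrt{f_N}\rangle\,d\mu_N$ (the $J$-term cancels since $J$ is antisymmetric and $\mu$ is invariant), and by a standard smoothing estimate for the Fokker--Planck equation this is integrable away from $0$; the derivative of $\bar{H}(\eta_N)$ is handled via the ODE~\eqref{eq: macro eqn} and the uniform bounds on $\bar{H}$ and its gradient coming from assumption (iii). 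A uniformly-Lipschitz function which is small in $L^1([\epsilon,T])$ is uniformly small, giving the claim.

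The main obstacle I anticipate is the cancellation of the antisymmetric contribution in the entropy dissipation: one has to verify that, even with the $J$-term in~\eqref{eq: micro eqn}, the identity $d/dt\, N^{-1}\int\Phi(f_N)\,d\mu_N = -N^{-1}\int |\nabla\sqrt{f_N}|_A^2\,d\mu_N$ still holds, which uses both the antisymmetry of $J$ and the fact that $J$ preserves $\mu$ (i.e.\ $\div(\mu J\nabla\cdot) = 0$ in the weak sense). Once this is in place, all the other ingredients are direct adaptations of~\cite{Fat13}.
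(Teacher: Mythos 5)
Your reduction to Theorem~\ref{theo: abstract entropy} for~\eqref{ent1} and~\eqref{ent2} is in line with the paper's intent (the paper declines to spell out the deduction, stating it is ``nearly the same as in~\cite{Fat13}''), and your handling of the Gaussian correction and of the passage from $\bar H(\eta_N)$ to $\int_{\mathbb T}\varphi(\zeta)\,d\theta-\varphi(\int\zeta)$ is sound. However, your argument for the uniform convergence on $[\epsilon,T]$ has a genuine gap, and it omits the one new ingredient the paper explicitly flags as ``the only significant difference.''

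You invoke a ``standard smoothing estimate for the Fokker--Planck equation'' to get a pointwise, $N$-uniform bound on $N^{-1}\int \frac{1}{f_N}\nabla f_N\cdot A\nabla f_N\,d\mu_N$ of order $t^{-1/2}$, which you then use to claim $F_N(t):=N^{-1}\int\Phi(f_N)\,d\mu_N$ is uniformly Lipschitz on $[\epsilon,T]$. No such estimate is established here, and it is not automatic. Bounds of that type are usually derived from monotonicity of the Fisher information along the flow, which is a Bakry--\'Emery argument requiring a lower bound on $\Hess H$; but under~\eqref{assumption_potential} the single-site potential only satisfies $\psi''=1+\delta\psi''$ with $\|\delta\psi\|_{C^2}<\infty$, so $\Hess H$ need not be bounded below, and no curvature-type assumption appears among (i)--(x). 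Even granting a curvature bound, one would still need to check it is uniform in $N$, and the non-reversible term $J$ introduces extra contributions in the second time-derivative of the entropy that your computation does not address (the cancellation you correctly observe holds only at the level of the first derivative). Note also that the a priori bound $\int_0^T N^{-1}\int\frac{1}{f_N}\nabla f_N\cdot A\nabla f_N\,d\mu_N\,dt\leq C_1$ gives an $L^1_t$ bound, not a pointwise one, so ``integrable away from $0$'' does not by itself yield Lipschitz.

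The intended route is different and avoids any smoothing of $f_N$. Two facts suffice: first, $F_N$ is non-increasing in $t$, by the entropy-production identity~\eqref{eq: entropy decrease} in which the $J$-contribution vanishes (this part you did identify); second, the limit $t\mapsto \int_{\mathbb T}\varphi(\zeta(t,\theta))\,d\theta-\varphi(\int\zeta)$ is continuous on $[\epsilon,T]$, which follows from smoothness of $\zeta$ on $[\epsilon,T]\times\mathbb T$ (parabolic regularity for~\eqref{eq: limiting eqn}, e.g.\ \cite{LSU}). A sequence of monotone functions that converges in $L^1$ to a continuous function converges pointwise, and a sequence of monotone functions converging pointwise to a continuous function on a compact interval converges uniformly (P\'olya/Dini). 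The smoothness of $\zeta$ is precisely the point the paper singles out as the new work relative to~\cite{Fat13}, and it is absent from your write-up.
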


Since deducing this result from Theorem \ref{theo: abstract entropy}  is nearly the same as in \cite{Fat13}, we omit the proof. The only significant difference is proving that the solution $\zeta$ of the hydrodynamic equation \ref{eq: limiting eqn} is smooth on $[\epsilon, T]$, which is a well-known result in parabolic PDE theory (see for example \cite{LSU}). Alternatively, it can be proven by a straightforward adaptation of the proof of Proposition 3.22 in \cite{Fat13}.

\section{Proof of the abstract results}
In this section, we prove Theorem~\ref{theo: abstract theorem} and provide a sketch of proof of Theorem~\ref{theo: abstract entropy}.
\subsection{Proof of Theorem \ref{theo: abstract theorem}}
Following the approach of \cite{GOVW09}, we prove Theorem~\ref{theo: abstract theorem} in three steps : first we differentiate with respect to time the  (scaled) Wasserstein distance between $f(t)\mu$ and the macroscopic profile $\delta_{NP^t\eta(t)}$, and we split the expression into macroscopic components and fluctuations around the macroscopic profile. Then we derive an upper bound for the quantity we obtain, using assumption (iii) to control the macroscopic contribution and assumption (vi) to control fluctuations. Finally, we integrate in time and apply Gronwall's Lemma to obtain the result.

\begin{lemma}
\label{lema: time-derivative of Theta}
Let $\Theta$ be defined as in (\ref{def: theta}). We compute its time-derivative: 
\begin{align}
&\frac{d}{dt}\frac{1}{2N}\int_X(x-NP^t\eta(t))\cdot A^{-1}(x-NP^t\eta(t))f(t,x)\mu(dx)\nonumber
\\&\quad=\frac{M}{N}-\int_Y(y-\eta)\cdot(\nabla_Y\overline{H}(y)-\nabla_Y\overline{H}(\eta))\overline{f}(t,y)\overline{\mu}(dy)\nonumber
\\&\quad\quad-\int_YPJA^{-1}NP^t(y-\eta)\cdot(\nabla_Y\overline{H}(y)-\nabla_Y\overline{H}(\eta))\overline{f}(t,y)\overline{\mu}(dy)\nonumber
\\&\quad\quad-\int_Y(y-\eta)\cdot P\mathrm{cov}_{\mu(dx|y)}(f,\nabla H)\overline{\mu}(dy)\nonumber
\\&\quad\quad -\frac{1}{N}\int_X (\idx-NP^tP)x\cdot\nabla f(t,x)\mu(dx)\nonumber
\\&\quad\quad+\int \overline{A}\nabla_Y \overline{H}(\eta)\cdot PA^{-1}(\idx-NP^tP)xf\mu(dx)\notag
\\&\quad\quad+\int_YPJA^{-1}NP^t(y-\eta)\cdot P\mathrm{cov}_{\mu(dx|y)}(f,\nabla H)\overline{\mu}(dy) \nonumber
\\&\quad\quad+\int_Y PJA^{-1}(\idx-NP^tP)x\cdot P\nabla f(t,x)\mu(dx)\nonumber
\\&\quad\quad+\frac{1}{N}\int_X (\idx-NP^tP)JA^{-1}(x-NP^t\eta)\cdot \nabla f(t,x)\mu(dx)\nonumber
\\&\quad\quad+\int_X PA^{-1}(\idx-NP^tP)x\cdot \overline{J}\nabla_Y\overline{H}(\eta)f(t,x)\mu(dx)\label{eq: d/dt Theta}.
\end{align}
\end{lemma}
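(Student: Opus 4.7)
The approach is to differentiate $\Theta(t)=\int_X\phi(x,t)f(t,x)\mu(dx)$ with $\phi(x,t):=\frac{1}{2N}(x-NP^t\eta(t))\cdot A^{-1}(x-NP^t\eta(t))$, handling the two sources of time-dependence separately. The dependence on $f$ is handled by the Fokker--Planck equation~\eqref{eq: micro eqn} followed by one integration by parts on $X$; the dependence on $\eta$ is handled by substituting~\eqref{eq: macro eqn} into $\partial_t\phi$. Since $\nabla_x\phi=\frac{1}{N}A^{-1}(x-NP^t\eta)$, this gives
\[
\frac{d\Theta}{dt} \;=\; -\frac{1}{N}\int A^{-1}(x-NP^t\eta)\cdot(A+J)\nabla f\,\mu(dx) \;+\; \int(x-NP^t\eta)\cdot A^{-1}P^t(\overline A+\overline J)\nabla\overline H(\eta)\,f\mu(dx).
\]
The first term splits, via $AA^{-1}=\idx$ and the antisymmetry identity $\dual{A^{-1}u}{J\nabla f}=-\dual{JA^{-1}u}{\nabla f}$, into a reversible contribution $-\frac{1}{N}\int(x-NP^t\eta)\cdot\nabla f\,\mu$ and an antisymmetric contribution $\frac{1}{N}\int JA^{-1}(x-NP^t\eta)\cdot\nabla f\,\mu$.

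Next, I would perform a systematic macroscopic/fluctuation decomposition using $\idx=NP^tP+(\idx-NP^tP)$: writing $y:=Px$ and $x_\perp:=(\idx-NP^tP)x$, one has $x-NP^t\eta=NP^t(y-\eta)+x_\perp$, and analogously for any vector after applying $A^{-1}$ or $JA^{-1}$. The fluctuation pieces match the $(\idx-NP^tP)$-terms in the statement directly. The macroscopic pieces always reduce to integrals of the form $\int w(y)\cdot P\nabla f\,\mu$ for an affine $w:Y\to Y$, namely $w(y)=y-\eta$ (from the reversible part) or $w(y)=Q(y-\eta)$ with $Q:=PJA^{-1}NP^t$ (from the antisymmetric part). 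An integration by parts on $X$ using $NPP^t=\mathrm{id}_Y$ and $\nabla\mu=-\mu\nabla H$ turns each such integral into a divergence term plus a remainder $\int w(y)\cdot P\nabla H\,f\mu$. The divergence gives the $M/N$ term of the statement in the reversible case (since $\mathrm{Tr}(PP^t)=M/N$) and vanishes in the antisymmetric case because $Q^t=-NPA^{-1}JP^t=-Q$ by the commutativity assumption~(x), so $Q$ is antisymmetric on $Y$ and $\mathrm{Tr}(Q)=0$. The remainder is then evaluated via the disintegration~\eqref{eq: disintegration} together with the covariance decomposition
\[
P\!\int\! f\,\nabla H\,\mu(dx|y) \;=\; \overline f(y)\,\nabla_Y\overline H(y)\;+\;P\,\mathrm{cov}_{\mu(\cdot|y)}(f,\nabla H),
\]
the key input being the identity $\nabla_Y\overline H(y)=P\!\int\!\nabla H\,\mu(dx|y)$, obtained by differentiating $\overline\mu(dy)=e^{-N\overline H(y)}dy$ after parametrising the fibre $\{Px=y\}$ via $x_0(y)=NP^ty$. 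The $\overline J$-part of the $\partial_t\phi$-contribution is handled by the identity $\overline A^{-1}\overline J=PA^{-1}NJP^t=PJA^{-1}NP^t=Q$, again using assumption~(x).

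The main obstacle is organisational rather than technical. Once all the splittings and integrations by parts are performed, one must carefully match the $\nabla_Y\overline H(y)$-terms coming from the $f$-derivative (via the covariance identity above) with the $\nabla_Y\overline H(\eta)$-terms coming from the $\eta$-derivative (via~\eqref{eq: macro eqn}) so that the differences $\nabla_Y\overline H(y)-\nabla_Y\overline H(\eta)$ emerge---once coupled to $(y-\eta)$ and once to $Q(y-\eta)$. These differences are precisely what will be needed in the next step of the proof in order to invoke assumption~(iii) (the Hessian bounds on $\overline H$) and then bound the antisymmetric coupling by the symmetric one via assumption~(ix) and a Cauchy--Schwartz argument.
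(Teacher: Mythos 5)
Your proposal is correct and follows essentially the same route as the paper: differentiate $\Theta$ via the Fokker--Planck equation and the macroscopic ODE, split into reversible and antisymmetric pieces, decompose each via $\idx = NP^tP + (\idx - NP^tP)$, and handle the macroscopic pieces by integration by parts together with the disintegration/covariance identity, with the trace of $Q = PJA^{-1}NP^t$ vanishing by antisymmetry. The only cosmetic difference is that you integrate by parts directly in $X$ using $\nabla\mu = -\mu\nabla H$ and then disintegrate, whereas the paper disintegrates first, invokes \cite[Lemma 21]{GOVW09}, and integrates by parts in $Y$ against $\overline\mu = e^{-N\overline H}dy$---these are equivalent bookkeeping orders for the same computation.
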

\begin{proof}
We have
\begin{align}
&\frac{d}{dt}\frac{1}{2N}\int_X(x-NP^t\eta(t))\cdot A^{-1}(x-NP^t\eta(t))f(t,x)\mu(dx)\nonumber
\\&\quad\stackrel{\eqref{eq: micro eqn}}{=}-\frac{1}{N}\int_X A^{-1}(x-NP^t\eta)\cdot (A+J)\nabla f\mu(dx)-\int P^t\frac{d\eta}{dt}\cdot A^{-1}(x-NP^t\eta)f\mu(dx)\nonumber
\\&\quad\stackrel{\eqref{eq: macro eqn}}{=}-\frac{1}{N}\int_X A^{-1}(x-NP^t\eta)\cdot A\nabla f\mu(dx)+\int \overline{A}\nabla_Y\overline{H}(\eta)\cdot PA^{-1}(x-NP^t\eta)f\mu(dx)\nonumber
\\&\quad\quad -\frac{1}{N}\int A^{-1}(x-NP^t\eta)\cdot J\nabla f\mu(dx)+\int A^{-1}(x-NP^t\eta)\cdot P^t\overline{J}\nabla \overline{H}(\eta)f\mu(dx)\nonumber
\\&=\quad (I)+(II)+(III)+(IV).\label{eq: d/dt theta 1}
\end{align}
We now use the decomposition $x=NP^tPx+(\idx-NP^tP)x$ to transform each term on the right hand side of~\eqref{eq: d/dt theta 1}. We need the following definition of the $\mu$-covariance of two functions $f,g\in L^2(\mu)$
\begin{equation}
\label{eq:covariance}
\cov_\mu(f,g)=\int fg\,d\mu-\left(\int f\,d\mu\right)\left(\int g\,d\mu\right).
\end{equation}

The first two terms, $(I)$ and $(II)$, are already done in~\cite{GOVW09}. We repeat the arguments here for the sake of completeness.
\begin{align}
&(I)=-\frac{1}{N}\int_X (x-NP^t\eta)\cdot \nabla f\mu(dx)\nonumber
\\&\quad=-\int_X P^t(Px-\eta)\cdot \nabla f\mu(dx)-\frac{1}{N}\int (\idx-NP^tP)x\cdot\nabla f\mu(dx).\label{eq: the first term of (I)}
\end{align}
We now transform the first term in~\eqref{eq: the first term of (I)} using~\eqref{eq: disintegration} and Lemma 21 in~\cite{GOVW09}. 
\begin{align*}
&-\int_X P^t(Px-\eta)\cdot \nabla f\mu(dx)=-\int (Px-\eta)\cdot P\nabla f\mu(dx) \nonumber
\\&\qquad\overset{\eqref{eq: disintegration}}{=} -\int_Y (y-\eta)\cdot P\int_{Px=y}\nabla f\mu(dx|y)\overline{\mu}(dy)\nonumber
\\&\qquad\overset{\cite[(36)]{GOVW09}}{=}-\frac{1}{N}\int(y-\eta)\cdot \nabla_Y\overline{f}\overline{\mu}(dy)-\int (y-\eta)\cdot P\mathrm{cov}_{\mu(dx|y)} (f,\nabla H)\overline{\mu}(dy)\nonumber
\\&\qquad\overset{\eqref{eq: macro H}}{=}\frac{1}{N}\int \nabla_Y\cdot y\overline{f}\overline{\mu}(dy)-\int(y-\eta)\cdot \nabla_Y\overline{H}(y)\overline{f}\overline{\mu}(dy)-\int (y-\eta)\cdot P\mathrm{cov}_{\mu(dx|y)} (f,\nabla H)\overline{\mu}(dy)\nonumber
\\&\qquad= \frac{\dim Y}{N}-\int(y-\eta)\cdot \nabla_Y\overline{H}(y)\overline{f}\overline{\mu}(dy)-\int (y-\eta)\cdot P\mathrm{cov}_{\mu(dx|y)} (f,\nabla H)\overline{\mu}(dy).
\end{align*}
We obtain
\begin{equation}
\label{eq: I}
(I)=\frac{\dim Y}{N}-\int(y-\eta)\cdot \nabla_Y\overline{H}(y)\overline{f}\overline{\mu}dy-\int (y-\eta)\cdot P\mathrm{cov}_{\mu(dx|y)} (f,\nabla H)\overline{\mu}dy-\frac{1}{N}\int (\idx-NP^tP)x\cdot\nabla f\mu(dx).
\end{equation}
Now we proceed with $(II)$. 
\begin{align}
(II)&=\int \overline{A}\nabla_Y \overline{H}(\eta)\cdot PA^{-1}NP^t(Px-\eta)f\mu(dx)+
\int \overline{A}\nabla_Y \overline{H}(\eta)\cdot PA^{-1}(\idx-NP^tP)xf\mu(dx)\nonumber
\\&\overset{\eqref{eq: macro A}}{=}\int\nabla_Y \overline{H}(\eta)\cdot (Px-\eta)f\mu(dx)+
\int \overline{A}\nabla_Y \overline{H}(\eta)\cdot PA^{-1}(\idx-NP^tP)xf\mu(dx)\nonumber
\\&\overset{\eqref{eq: disintegration}}{=}\int_Y (y-\eta)\cdot\nabla_Y\overline{H}(\eta)\overline{f}\overline{\mu}(dy) +\int \overline{A}\nabla_Y \overline{H}(\eta)\cdot PA^{-1}(\idx-NP^tP)xf\mu(dx).\label{eq: II}
\end{align}
Next, we continue with $(III)$.
\begin{align*}
(III)&=\frac{1}{N}\int JA^{-1}(x-NP^t\eta)\cdot \nabla f\mu(dx)
\\&=\frac{1}{N}\int PJA^{-1}(x-NP^t\eta)\cdot NP\nabla f\mu(dx)+\frac{1}{N}\int (\idx-NP^tP)JA^{-1}(x-NP^t\eta)\cdot \nabla f\mu(dx)
\\&=\frac{1}{N}\int PJA^{-1}NP^t(Px-\eta)\cdot NP\nabla f \mu(dx)+\frac{1}{N}\int PJA^{-1}(\idx-NP^tP)x\cdot NP\nabla f \mu(dx)
\\&\quad +\frac{1}{N}\int (\idx-NP^tP)JA^{-1}(x-NP^t\eta)\cdot \nabla f\mu(dx).
\end{align*}
The first term on the right hand side of the expression above can be transformed further using Lemma 21 in~\cite{GOVW09} as done for $(I)$.
\begin{align*}
&\frac{1}{N}\int PJA^{-1}(x-NP^t\eta)\cdot NP\nabla f\mu(dx)=\frac{1}{N}\int_Y \Big(\int_{Px=y}PJA^{-1}NP^t(y-\eta)\cdot NP\nabla f\mu(dx|y)\Big)\overline{\mu}dy
\\&\qquad =\frac{1}{N}\int_YPJA^{-1}NP^t(y-\eta)\cdot\Big[\nabla_Y \overline{f}(y)+NP\mathrm{cov}_{\mu(dx|y)}(f,\nabla H)\Big]\overline{\mu}(dy)
\\&\qquad = \frac{1}{N}\int_YPJA^{-1}NP^t(y-\eta)\cdot\nabla_Y \overline{f}(y)\overline{\mu}(dy)+\int_YPJA^{-1}NP^t(y-\eta)\cdot P\mathrm{cov}_{\mu(dx|y)}(f,\nabla H)\overline{\mu}(dy)
\\&=-\frac{\mathrm{Tr}(PJA^{-1}NP^t)}{N}+\int_YPJA^{-1}NP^t(y-\eta)\cdot\nabla_Y\overline{H}(y)\overline{f}\overline{\mu}(dy)
\\&\qquad\qquad+\int_YPJA^{-1}NP^t(y-\eta)\cdot P\mathrm{cov}_{\mu(dx|y)}(f,\nabla H)\overline{\mu}(dy).
\end{align*}
Since $PJA^{-1}NP^t$ is anti-symmetric, $\mathrm{Tr}(PJA^{-1}NP^t)$=0, and we obtain
\begin{align}
\label{eq: III}
(III)&=\int_YPJA^{-1}NP^t(y-\eta)\cdot\nabla_Y\overline{H}(y)\overline{f}\overline{\mu}(dy)\nonumber
\\&\qquad+\int_YPJA^{-1}NP^t(y-\eta)\cdot P\mathrm{cov}_{\mu(dx|y)}(f,\nabla H)\overline{\mu}(dy)\nonumber
\\&\qquad+\frac{1}{N}\int PJA^{-1}(\idx-NP^tP)x\cdot NP\nabla f \mu(dx)\nonumber
\\&\qquad +\frac{1}{N}\int (\idx-NP^tP)JA^{-1}(x-NP^t\eta)\cdot \nabla f\mu(dx).
\end{align}
Finally, we now transform $(IV)$.
\begin{align}
(IV)&=\int PA^{-1}NP^t(Px-\eta)\cdot \overline{J}\nabla_Y\overline{H}(\eta)f\mu(dx)+\int PA^{-1}(\idx-NP^tP)\cdot \overline{J}\nabla_Y\overline{H}(\eta)f\mu(dx)\nonumber
\\&\overset{\eqref{eq: disintegration}}{=}\int_Y PA^{-1}NP^t(y-\eta)\cdot \overline{J}\nabla_Y\overline{H}(\eta)\overline{f}\overline{\mu}(dy)+\int PA^{-1}(\idx-NP^tP)\cdot \overline{J}\nabla_Y\overline{H}(\eta)f\mu(dx)\nonumber
\\&\overset{\eqref{eq: macro A}}{=}-\int PJA^{-1}NP^t(y-\eta)\cdot\nabla_Y\overline{H}(\eta)\overline{f}\overline{\mu}(dy)+\int PA^{-1}(\idx-NP^tP)\cdot \overline{J}\nabla_Y\overline{H}(\eta)f\mu(dx).
\label{eq: IV}
\end{align}
Substituting~\eqref{eq: I}-\eqref{eq: IV} into~\eqref{eq: d/dt theta 1}, we obtain~\eqref{eq: d/dt Theta} and the lemma is proven.
\end{proof}
The following auxiliary lemma will be helpful in the sequel. The first part will be used to control the interplay between $A$ and $J$. The second and third parts are respectively (54) and (52) in~\cite{GOVW09}; we state them here for the readers' convenience. 

\begin{lemma}
\label{lemma: auxilary estimates}We have the following estimate
\begin{enumerate}
\item For every~$y\in Y$
\begin{align}
&|PJA^{-1}NP^ty|^2\leq c\langle \overline{A}^{-1}y,y\rangle\leq \frac{c}{\tau} |y|_Y^2,\label{eq: ineq1}
\\&\langle \bar{A}PJA^{-1}NP^ty, PJA^{-1}NP^ty \rangle \leq c|y|^2.\label{eq: ineq2}
\end{align}  
\item For every $x\in X$
\begin{equation}
(\idx-NP^tP)x\cdot A^{-1}(\idx-NP^tP)x\leq \frac{\gamma}{M^2}|x|^2\label{eq: ineq3}.
\end{equation} 
\item It holds that
\begin {equation}
\label{eq: cov estimate}
|NP^tP\cov_{\mu(dx|y)}(f,\nabla H)|^2\leq \gamma \frac{\kappa^2}{\rho^2}\frac{1}{M^2}\bar{f}\int \frac{1}{f}\nabla f\cdot A\nabla f\mu(dx|y).
\end{equation}
\end{enumerate}
\end{lemma}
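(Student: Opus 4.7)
The plan is to address the three parts separately, with the bulk of the work being in part 1 since parts 2 and 3 are simply restatements of (54) and (52) in \cite{GOVW09}, proved there under the assumptions already used in the symmetric case. I therefore focus on part 1, which is the new ingredient needed to control the antisymmetric piece.

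For the first inequality in (\ref{eq: ineq1}), I would pull $P$ off the vector and work in $X$: since $NP^tP$ is symmetric and, using $NPP^t = \idx_Y$, satisfies $(NP^tP)^2 = NP^tP$, it is an orthogonal projection, hence $P^tP \leq N^{-1}\idx_X$ and therefore $|Pw|_Y^2 \leq N^{-1}|w|_X^2$ for any $w \in X$. Applied to $w = JA^{-1}NP^ty$, together with the antisymmetry identity $|Jv|^2 = \langle -J^2 v, v\rangle$ and the commutativity of $A^{-1}$ with $J$ (assumption (x)), this reduces matters to bounding $\langle -J^2 A^{-2}NP^ty, NP^ty\rangle$. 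Since $-J^2$, $A$ and hence all powers of $A$ commute pairwise and are self-adjoint, the operator inequality $-J^2 \leq cA$ from (ix) can be multiplied by the positive commuting factor $A^{-2}$ to give $-J^2 A^{-2} \leq cA^{-1}$. Combined with the identity $N\langle \bar{A}^{-1}y, y\rangle_Y = \langle A^{-1}NP^ty, NP^ty\rangle_X$ coming from the definition (\ref{eq: macro A}) of $\bar{A}$, this produces the first bound of (\ref{eq: ineq1}). The second bound is immediate from (viii): $A^{-1} \leq \tau^{-1}\idx$ yields $\bar{A}^{-1} = PA^{-1}NP^t \leq \tau^{-1}NPP^t = \tau^{-1}\idx_Y$.

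For (\ref{eq: ineq2}), the key tool is the variational identity
\[
N\langle \bar{A}z, z\rangle_Y = \inf\{\langle \xi, A\xi\rangle_X : P\xi = z\},
\]
which follows from a standard Lagrange-multiplier computation (the minimizer is $\xi^\star = NA^{-1}P^t\bar{A}z$, with optimal value $N\langle \bar A z, z\rangle$). I would apply this to $z = PJA^{-1}NP^ty$ using the feasible but generally non-optimal competitor $\xi = JA^{-1}NP^ty$, obtaining
\[
N\langle \bar{A}PJA^{-1}NP^ty, PJA^{-1}NP^ty\rangle_Y \leq \langle JA^{-1}NP^ty, AJA^{-1}NP^ty\rangle_X.
\]
By commutativity of $A$ and $J$ and the antisymmetry identity, the right-hand side equals $\langle -J^2 A^{-1}NP^ty, NP^ty\rangle_X$, and since $-J^2 A^{-1} \leq c\idx$ (again from (ix) combined with (x)), this is at most $c|NP^ty|^2 = cN|y|_Y^2$. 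Dividing by $N$ yields (\ref{eq: ineq2}).

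The main subtlety is purely one of bookkeeping: $P$ is not an isometry but satisfies the rescaled relation $NPP^t = \idx_Y$, so norms on $X$ and on $Y$ pick up different powers of $N$, and these must line up at each step together with the various occurrences of $A^{\pm 1}$. Once that is sorted, both inequalities reduce to the elementary observation that $-J^2 \leq cA$ with $[A,J]=0$ implies $-J^2 A^{-k} \leq cA^{1-k}$ for any $k$, plus the variational characterization of $\bar A$ as the coarse-grained quadratic form.
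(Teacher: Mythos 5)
Your proof is correct, and for parts 2 and 3 you take the same route as the paper (both just cite (54) and (52) of \cite{GOVW09}). For part 1, however, you diverge in two small but genuine ways. For (\ref{eq: ineq1}), after the projection bound $|Pw|_Y^2 \leq N^{-1}|w|_X^2$ and the antisymmetry identity, the paper simply applies $-J^2 \leq cA$ with the test vector $v = A^{-1}NP^t y$, which gives $\langle -J^2 v, v\rangle \leq c\langle Av,v\rangle = c\langle NP^ty, A^{-1}NP^ty\rangle$ without ever invoking commutativity; your rearrangement to $\langle -J^2A^{-2}NP^ty, NP^ty\rangle$ and the operator inequality $-J^2A^{-2} \leq cA^{-1}$ needs assumption (x), which is available but not required here. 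For (\ref{eq: ineq2}), the paper bootstraps from (\ref{eq: ineq1}) via the Legendre duality $\langle \bar A w, w\rangle = \sup_z\{2\langle w,z\rangle - \langle\bar A^{-1}z,z\rangle\}$, using antisymmetry of $PJA^{-1}NP^t$ and then the already-proven first inequality to control $-\langle\bar A^{-1}z,z\rangle$. You instead use the primal variational characterization $N\langle \bar A z, z\rangle_Y = \inf\{\langle\xi, A\xi\rangle_X : P\xi = z\}$ with the feasible competitor $\xi = JA^{-1}NP^ty$, and close with $-J^2A^{-1} \leq c\,\mathrm{id}$ (from (ix) and (x)). Your argument is self-contained (independent of (\ref{eq: ineq1})) and makes the coarse-graining interpretation of $\bar A$ transparent; the paper's is slightly slicker because it reuses (\ref{eq: ineq1}) rather than re-deriving an operator bound. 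Both are correct and of comparable length.
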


\begin{proof}
We only need to prove the first part.

We start with~\eqref{eq: ineq1}. The first inequality is obtained using the assumption (2) and the fact that $NP^tP$ is an orthogonal projection of $X$ to $(\ker P)^\perp$ as follows.
\begin{align*}
\langle PJA^{-1}NP^ty,PJA^{-1}NP^ty\rangle&=\frac{1}{N}\langle NP^tPJA^{-1}NP^ty,JA^{-1}NP^ty\rangle
\\&\leq \frac{1}{N}\langle JA^{-1}NP^ty,JA^{-1}NP^ty\rangle
\\&=-\frac{1}{N}\langle J^2A^{-1}NP^ty,A^{-1}NP^ty\rangle
\\&\leq \frac{c}{N}\langle A^{-1}NP^ty,NP^ty\rangle \quad\text{(used assumption~\eqref{asumpt: weak anti-symmetric} here)}
\\&=c\langle NPA^{-1}P^ty,y\rangle
\\&=c\langle\overline{A}^{-1}y,y\rangle.
\end{align*}
Now we prove the second one. Since $\tau$ is a lower bound on the spectral value of $A$, $\frac{1}{\tau}$ is an upper bound on that of $A^{-1}$. Hence
\begin{equation*}
\langle \overline{A}^{-1}y,y\rangle=\langle PA^{-1}NP^ty,y\rangle=\frac{1}{N}\langle A^{-1}NP^ty,NP^ty \rangle\leq \frac{1}{N\tau}\langle NP^ty,NP^ty\rangle=\frac{1}{\tau}|y|_Y^2.
\end{equation*}
Next, we prove~\eqref{eq: ineq2}. By duality, we have
\begin{align}
&\langle \bar{A}PJA^{-1}NP^ty, PJA^{-1}NP^ty \rangle = \underset{z}{\sup} \hspace{1mm}\{ 2\langle PJA^{-1}NP^ty, z \rangle - \langle \bar{A}^{-1}z, z \rangle\} \notag  \\
&\overset{\eqref{eq: ineq1}}{\leq} \underset{z}{\sup} \hspace{1mm} \{2\langle y, PJA^{-1}NP^tz \rangle - c^{-1}|PJA^{-1}NP^tz|^2\} \notag \\
&\leq \underset{z}{\sup} \hspace{1mm} \{2\langle y, z \rangle - c^{-1}|z|^2\} \notag \\
&\leq c|y|^2 \notag.
\end{align}
\end{proof}

\begin{lemma} \label{lem : bounds entropy production}
If $f(t,x)$ and $\eta(t)$ satisfy the assumptions of theorem~\ref{theo: abstract theorem}, then for any $T<\infty$ we have
\begin{equation}
\label{eq: bound of Fisher term}
\int_0^T\int\frac{1}{f}\nabla f\cdot A\nabla f(t,x)\mu(dx)dt=\int \Phi(f(0,x))\mu(dx)-\int \Phi(f(T,x))\mu(dx);
\end{equation}
\begin{equation}
\label{eq: bound of macro Fisher term}
\int_0^T\langle\overline{A}\nabla_Y\overline{H}(\eta),\nabla_Y\overline{H}(\eta)\rangle dt\leq 2(H(\eta_0)-H(\eta_T))+CT(1+e^{CT}\overline{H}(\eta_0)),
\end{equation}
where $C>0$ is a constant;
\begin{equation}
\label{eq: bound of second moment}
\left(\int |x|^2f(t,x)\mu(dx)\right)^\frac{1}{2}\leq \left(\int|x|^2\mu(dx)\right)^\frac{1}{2}+\left(\frac{2}{\rho}\int\Phi(f(0,x))\mu(dx)\right)^\frac{1}{2}.
\end{equation}
\end{lemma}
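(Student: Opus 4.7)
I treat the three inequalities independently. For~\eqref{eq: bound of Fisher term}, differentiating the microscopic entropy $t\mapsto \int\Phi(f(t,x))\mu(dx)$ in time and substituting~\eqref{eq: micro eqn} yields, after integration by parts in $x$, the quantity $-\int \frac{1}{f}\nabla f \cdot (A+J)\nabla f\,\mu(dx)$. Antisymmetry of $J$ forces $\nabla f\cdot J\nabla f=0$ pointwise, so the $J$-contribution drops out and integration in $t$ produces the claimed identity.

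For~\eqref{eq: bound of macro Fisher term}, I differentiate $\bar H(\eta(t))$ along~\eqref{eq: macro eqn} to get
$$\frac{d}{dt}\bar H(\eta) = -\langle \bar A\nabla\bar H(\eta),\nabla\bar H(\eta)\rangle - \langle\bar J\nabla\bar H(\eta),\nabla\bar H(\eta)\rangle.$$
The second term no longer vanishes pointwise, but the commutation assumption (x) implies that $K:=\bar A^{-1}\bar J=PJA^{-1}NP^t$ is antisymmetric (since then $A^{-1}J=JA^{-1}$), so $\bar J=\bar A K$. Cauchy--Schwarz in the $\bar A$-weighted inner product, combined with~\eqref{eq: ineq2} applied at $y=\nabla\bar H(\eta)$, gives
$$|\langle \bar J\nabla\bar H(\eta),\nabla\bar H(\eta)\rangle| = |\langle K\nabla\bar H(\eta),\bar A\nabla\bar H(\eta)\rangle|\leq \sqrt{c}\,|\nabla\bar H(\eta)|\,\sqrt{\langle\bar A\nabla\bar H(\eta),\nabla\bar H(\eta)\rangle}.$$
Young's inequality then absorbs half of the $\bar A$-dissipation and leaves a residual term $\frac{c}{2}|\nabla\bar H(\eta)|^2$. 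Assumptions (iii) and (v) yield $|\nabla\bar H(\eta)|^2\leq 2\Lambda(\bar H(\eta)+\beta)$, so dropping the (nonpositive) dissipation gives $\frac{d}{dt}(\bar H(\eta)+\beta)\leq c\Lambda(\bar H(\eta)+\beta)$; Gronwall's lemma delivers $\bar H(\eta(t))+\beta\leq(\bar H(\eta_0)+\beta)e^{c\Lambda T}$ on $[0,T]$. Reinserting this uniform bound into the differential inequality and integrating in time produces~\eqref{eq: bound of macro Fisher term}.

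For~\eqref{eq: bound of second moment}, I apply the triangle inequality for the quadratic Wasserstein distance,
$$\Bigl(\int|x|^2 f(t,x)\mu(dx)\Bigr)^{1/2} = W_2(\delta_0, f(t,\cdot)\mu) \leq W_2(\delta_0,\mu) + W_2\bigl(\mu,\,f(t,\cdot)\mu\bigr),$$
and bound the second term via Talagrand's $T_2$ inequality, a consequence of the LSI for $\mu$ (itself inherited from (ii) and (iii) through the two-scale LSI criterion of Otto--Reznikoff): $W_2(\mu,f\mu)^2\leq\frac{2}{\rho}\int\Phi(f)\mu(dx)$. Part~\eqref{eq: bound of Fisher term} shows that $t\mapsto \int\Phi(f(t,\cdot))\mu(dx)$ is nonincreasing, hence bounded by its value at $t=0$, which yields the estimate.

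The main obstacle is the second item: unlike in the reversible case, $\bar H$ is no longer a Lyapunov function along the macroscopic flow~\eqref{eq: macro eqn} because of the $\bar J$-contribution, and closing the estimate requires the joint use of the commutation assumption (x), the pointwise bound~\eqref{eq: ineq2}, and a Gronwall bootstrap made possible by the quadratic-growth control on $\bar H$ coming from (iii) and (v).
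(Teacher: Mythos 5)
Your proposal is correct and follows essentially the same three-step strategy as the paper: entropy dissipation plus antisymmetry of $J$ for the first identity, a Young/Cauchy--Schwarz absorption of the $\bar J$ term via~\eqref{eq: ineq2} followed by a Gronwall bootstrap for the second, and the Talagrand-plus-monotone-entropy argument (the content of Lemma 26 in \cite{GOVW09}, which you rederive) for the third. The only cosmetic differences are the order of operations in the second estimate (Cauchy--Schwarz then Young rather than Young applied directly in the $\bar A$-weighted form) and your explicit invocation of assumption (v) for the lower bound on $\bar H$, which the paper leaves implicit.
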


\begin{proof}
The proof of this lemma is similar to that of proposition 24 in~\cite{GOVW09}.
We prove~\eqref{eq: bound of Fisher term} first. We have
\begin{align}
\frac{d}{dt}\int \Phi(f(t,x))\mu(dx)&=\int (\log f+1)\partial_t(f\mu)\notag
\\&=\int (\log f+1)\mathrm{div}(\mu(A+J)\nabla f)\notag
\\&=-\int (A+J)\nabla f\cdot \frac{\nabla f}{f}\mu(dx)\notag
\\&=-\int  \frac{1}{f}A\nabla f\cdot\nabla f\mu(dx)\qquad\text{(since $J$ is anti-symmetric)}\label{eq: entropy decrease}.
\end{align}
Thus \eqref{eq: bound of Fisher term} follows. Next we prove~\eqref{eq: bound of macro Fisher term}. We have
\begin{align*}
\frac{d}{dt}\overline{H}(\eta(t))&=\langle\dot{\eta}(t),\nabla_Y\overline{H}(\eta)\rangle
\\&\overset{\eqref{eq: macro eqn}}{=}-\langle\overline{A}\nabla_Y\overline{H}(\eta),\nabla_Y\overline{H}(\eta)\rangle-\langle\overline{J}\nabla_Y\overline{H}(\eta),\nabla_Y\overline{H}(\eta)\rangle
\\&= -\langle \bar{A}\nabla_Y \bar{H}(\eta), \nabla_Y \bar{H}(\eta) \rangle - \langle \bar{A}PJA^{-1}NP^t \nabla_Y \bar{H}(\eta), \nabla_Y \bar{H}(\eta) \rangle \notag \\
&\leq  \frac{1}{2}\langle \bar{A}PJA^{-1}NP^t \nabla_Y \bar{H}(\eta), PJA^{-1}NP^t \nabla_Y \bar{H}(\eta)\rangle-\frac{1}{2}\langle\overline{A}\nabla_Y\overline{H}(\eta),\nabla_Y\overline{H}(\eta)\rangle \notag
\\&\overset{\eqref{eq: ineq2}}{\leq} \frac{c}{2}|\nabla_Y \bar{H}(\eta)|^2-\frac{1}{2}\langle\overline{A}\nabla_Y\overline{H}(\eta),\nabla_Y\overline{H}(\eta)\rangle. \notag
\end{align*}
In this computation, we used the assumption that $A$ and $J$ commute. Therefore
\begin{align*}
\frac{d}{dt}\overline{H}(\eta(t)) &+ \frac{1}{2}\langle\overline{A}\nabla_Y\overline{H}(\eta),\nabla_Y\overline{H}(\eta)\rangle \leq\frac{c}{2}|\nabla \bar{H}(\eta)|^2
\\&\leq C(|\eta|^2+1) \leq C(\overline{H}(\eta)+1).
\end{align*}
In the above estimate, $C>0$ is a general constant. Note that we have used assumption~\eqref{asumpt: bounded of Hess macro H}. The above Gronwall-type inequality implies that for every $t\geq 0$, we have $\overline{H}(\eta(t))\leq e^{CT}(\overline{H}(\eta_0) + 1)$, and 
\[\int_0^T\langle\overline{A}\nabla_Y\overline{H}(\eta),\nabla_Y\overline{H}(\eta)\rangle dt\leq CTe^{CT}(\bar{H}(\eta_0) +1).
\]
According to Lemma 26 in \cite{GOVW09}, since $\mu$ satisfies LSI($\rho$), we have
$$\left(\int{|x|^2f(t,x)\mu(dx)}\right)^{1/2} \leq \left(\int{|x|^2\mu(dx)}\right)^{1/2} + \left(\frac{2}{\rho}\int{\Phi(f(t,x))\mu(dx)}\right)^{1/2}.$$
By~\eqref{eq: entropy decrease}, $\int\Phi(f(t,x))\mu(dx)$ is non-increasing in $t$, and~\eqref{eq: bound of second moment} immediately follows. 
\end{proof}
\begin{lemma}
We have the following estimate
\begin{align}
&\frac{d}{dt}\Theta(t)-\frac{8c\Lambda^2}{\lambda}\Theta(t)+\frac{\lambda}{8}\int|y-\eta|^2\overline{f}\overline{\mu}(dy)\notag
\\&\quad \leq \frac{M}{N}+\frac{4c\gamma\Lambda^2}{2\lambda NM^2}\int|x|^2f\mu(dx)\notag
\\&\qquad+\left(\frac{\gamma \kappa^2}{2\lambda\rho^2M^2}+\frac{2c\gamma\kappa^2}{\tau\lambda\rho^2M^2}+\frac{4\gamma c}{\lambda \tau M^2}\right)\int \frac{1}{Nf}\nabla f\cdot A\nabla f\mu(dx)\notag
\\&\qquad+\frac{\sqrt{\gamma}}{M}\left(\int \frac{1}{N}|x|^2f\mu(dx)\right)^\frac{1}{2}\left[\left(1+\sqrt{\frac{c}{\tau}}+\frac{\sqrt{2c\gamma}}{M}\right)\left(\int\frac{1}{Nf}\nabla f\cdot A\nabla f\mu(dx)\right)^\frac{1}{2}\right.\notag\\
&\left.\hspace*{6cm}+\left(1+\sqrt{\frac{c}{\tau}}\right)\left(\overline{A}\nabla_Y\overline{H}(\eta)\cdot\nabla_Y\overline{H}(\eta)\right)^\frac{1}{2}\right].\label{eq: estimate derivative of Theta}
\end{align}
\end{lemma}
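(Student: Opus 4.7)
The plan is to estimate term-by-term the ten contributions on the right-hand side of the identity~\eqref{eq: d/dt Theta} supplied by Lemma~\ref{lema: time-derivative of Theta} and then reassemble them. Numbering the terms 1 through 10 in the order listed in that lemma, they fall naturally into four groups: (a) the dimensional constant $M/N$ (term 1); (b) the two ``macroscopic'' terms involving $\nabla_Y\overline{H}(y)-\nabla_Y\overline{H}(\eta)$, one coming from $A$ (term 2), the other---new in the non-reversible setting---coming from $J$ (term 3); (c) the two covariance terms of the form $P\,\mathrm{cov}_{\mu(dx|y)}(f,\nabla H)$ (terms 4 and 7); and (d) the five ``fluctuation'' terms featuring the projection $(\idx-NP^tP)x$ (terms 5, 6, 8, 9, 10).

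For group (b), the symmetric contribution is controlled by the strong convexity of $\overline{H}$ (assumption (iii)), yielding the coercive term $-\lambda\int|y-\eta|^2\overline{f}\overline{\mu}$. The antisymmetric contribution is treated by Cauchy--Schwarz and the upper bound $|\nabla\overline{H}(y)-\nabla\overline{H}(\eta)|\leq\Lambda|y-\eta|$, combined with inequality~\eqref{eq: ineq1} of Lemma~\ref{lemma: auxilary estimates} to control $|PJA^{-1}NP^t(y-\eta)|^2$ by $c\langle\overline{A}^{-1}(y-\eta),y-\eta\rangle$. The crucial step is to relate $\int\langle\overline{A}^{-1}(y-\eta),y-\eta\rangle\overline{f}\overline{\mu}$ back to $\Theta(t)$: writing $NP^t(y-\eta) = (x-NP^t\eta) - (\idx-NP^tP)x$, applying the $A^{-1}$-weighted triangle inequality, and invoking~\eqref{eq: ineq3}, one obtains a bound in terms of $\Theta(t)$ and $\frac{1}{NM^2}\int|x|^2f\mu$. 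A suitable Young inequality then produces simultaneously the $\frac{8c\Lambda^2}{\lambda}\Theta(t)$ contribution (absorbed on the LHS for a subsequent Gronwall step), the prescribed $\int|x|^2f\mu$ error term, and a residual for $\int|y-\eta|^2\overline{f}\overline{\mu}$ that combines with the coercive term to give at most $-\frac{\lambda}{8}\int|y-\eta|^2\overline{f}\overline{\mu}$.

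For group (c), Cauchy--Schwarz combined with the covariance estimate~\eqref{eq: cov estimate} extracts the $1/M^2$ scaling with prefactor $\kappa^2/\rho^2$, weighting the Fisher-type integral $\int\frac{1}{Nf}\nabla f\cdot A\nabla f\,\mu$. The antisymmetric covariance term (term 7) requires an additional use of~\eqref{eq: ineq1}, producing the $\frac{2c\gamma\kappa^2}{\tau\lambda\rho^2 M^2}$ prefactor. For group (d), Cauchy--Schwarz in $A$-weighted form combined with~\eqref{eq: ineq3} extracts the common $\sqrt{\gamma}/M$ prefactor; each of the five fluctuation terms then produces $\frac{\sqrt{\gamma}}{M}(\int|x|^2f\mu/N)^{1/2}$ multiplied either by the square root of the Fisher information (from terms 5, 8, 9) or by $(\overline{A}\nabla_Y\overline{H}(\eta)\cdot\nabla_Y\overline{H}(\eta))^{1/2}$ (from terms 6, 10). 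The factors $\sqrt{c/\tau}$ and $\sqrt{2c\gamma}/M$ inside the bracket arise from the interactions with $J$, handled via~\eqref{eq: ineq1} together with the commutation assumption (x), which allows $JA^{-1}$ to be rewritten as $A^{-1}J$ and estimated by $|JA^{-1}u|^2\leq c\langle u,A^{-1}u\rangle$ via assumption (ix).

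The main obstacle is the bookkeeping: the Young parameters for terms 3, 4 and 7 must be tuned so that the coefficient of $\Theta(t)$ comes out exactly $\frac{8c\Lambda^2}{\lambda}$, while the residual coefficient of $\int|y-\eta|^2\overline{f}\overline{\mu}$ is at most $-\lambda/8$ after summing all Young contributions. The mixed terms 8 and 9 are also delicate because they feature the antisymmetric operator $J$ and the fluctuation projection $(\idx-NP^tP)$ simultaneously, so both~\eqref{eq: ineq1} and~\eqref{eq: ineq3}---together with (x) and (ix)---must be invoked to close the estimate without extra spectral information on $A$ beyond the lower bound $\tau\,\mathrm{Id}$.
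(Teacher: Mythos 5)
Your proposal is correct and follows essentially the same route as the paper: estimate each of the ten terms from Lemma~\ref{lema: time-derivative of Theta} using the same tools (strong convexity for the coercive macroscopic term, \eqref{eq: ineq1} for the $J$-interactions, \eqref{eq: ineq3} for the fluctuation projections, \eqref{eq: cov estimate} for the covariance terms, and Young's inequality tuned so the $\Theta$-coefficient comes out as $8c\Lambda^2/\lambda$ and the residual $\lambda/8$ contributions leave a net $-\lambda/8$ coefficient on $\int|y-\eta|^2\overline{f}\overline{\mu}$). The only small imprecision is your accounting of term 9, which in the paper actually contributes three pieces (a Fisher term, a $\lambda/8\int|y-\eta|^2$ piece, and the mixed $\sqrt{\mathrm{Fisher}}\times\sqrt{|x|^2}$ piece carrying the $\sqrt{2c\gamma}/M$ factor), not just one; but you do acknowledge this term's delicate mixed character, so the plan is sound.
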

\begin{proof}
We estimate each term in~\eqref{eq: d/dt Theta}. The 2nd, 4th and 5th terms are already estimated in~\cite{GOVW09} (these are respectively equations (50), (53) and (55) in \cite{GOVW09}). We get
\begin{align}
&-\int_Y(y-\eta)\cdot(\nabla_Y\overline{H}(y)-\nabla_Y\overline{H}(\eta))\overline{f}\overline{\mu}dy\leq -\lambda\int |y-\eta|_Y^2\overline{f}\overline{\mu}dy,\label{eq: 2nd term}
\\&\left|\int(y-\eta)\cdot P \mathrm{cov}_{\mu(dx|y)}(f,\nabla H)\overline{\mu}dy\right|\leq \frac{\gamma\kappa^2}{2\lambda\rho^2M^2}\int\frac{1}{Nf}\nabla f\cdot A\nabla f\mu(dx)+\frac{\lambda}{2}\int |y-\eta|_Y^2\overline{f}\overline{\mu}(dy),\label{eq: 4th term}
\\&\left|\frac{1}{N}\int (\idx-NP^tP)x\cdot \nabla f\mu(dx)\right|\leq \left(\frac{\gamma}{M^2}\int\frac{1}{Nf}\nabla f\cdot A\nabla f\mu(dx)\cdot\int\frac{1}{N}|x|^2f\mu(dx)\right)^\frac{1}{2}.\label{eq: 5th term}
\end{align}
We estimate the 3rd term.
Since
\begin{align*}
&|PJA^{-1}NP^t(y-\eta)|\cdot|\nabla_Y\overline{H}(y)-\nabla_Y\overline{H}(\eta)| \leq \Lambda|y-\eta|\cdot |PJA^{-1}NP^t(y-\eta)| \notag
\\&\qquad \overset{\eqref{eq: ineq1}}{\leq} \Lambda|y-\eta|\sqrt{c\langle\overline{A}^{-1}(y-\eta),y-\eta\rangle}\notag
\\&\qquad \leq\frac{\lambda}{8}|y-\eta|^2+\frac{2c\Lambda^2}{\lambda}\langle\overline{A}^{-1}(y-\eta),y-\eta\rangle,
\end{align*}
we have
\begin{align}
&\left|\int_YPJA^{-1}NP^t(y-\eta)\cdot(\nabla_Y\overline{H}(y)-\nabla_Y\overline{H}(\eta))\overline{f}\overline{\mu}(dy)\right|\nonumber
\\&\qquad\leq \int_Y |PJA^{-1}NP^t(y-\eta)||\nabla_Y\overline{H}(y)-\nabla_Y\overline{H}(\eta)|\overline{f}\overline{\mu}(dy)\nonumber
\\&\qquad \leq\frac{\lambda}{8}\int_Y|y-\eta|^2\overline{f}\overline{\mu}(dy)+\frac{2c\Lambda^2}{\lambda}\int_Y\langle\overline{A}^{-1}(y-\eta),y-\eta\rangle\overline{f}\overline{\mu}(dy)\notag
\\&\qquad=\frac{\lambda}{8}\int_Y|y-\eta|^2\overline{f}\overline{\mu}(dy)+\frac{2c\Lambda^2}{\lambda}\frac{1}{N}\int_X\langle A^{-1}NP^t(Px-\eta),NP^t(Px-\eta)\rangle f\mu(dx)\notag
\\&\qquad\leq\frac{\lambda}{8}\int_Y|y-\eta|^2\overline{f}\overline{\mu}(dy)+\frac{2c\Lambda^2}{\lambda}\frac{2}{N}\int_X\langle A^{-1}(x-NP^t\eta),(x-NP^t\eta)\rangle f\mu(dx)\notag
\\&\hspace{20mm}+\frac{2c\Lambda^2}{\lambda}\frac{2}{N}\int_X\langle A^{-1}(\idx-NP^tP)x,(\idx-NP^tP)x\rangle f\mu(dx)\notag
\\&\qquad\leq \frac{\lambda}{8}\int_Y|y-\eta|^2\overline{f}\overline{\mu}(dy)+\frac{8c\Lambda^2}{\lambda}\Theta(t)+\frac{4c\gamma\Lambda^2}{\lambda NM^2}\int|x|^2f\mu(dx),
\label{eq: 3rd term}
\end{align}
where we used \eqref{eq: ineq3} to get the last inequality. Next we estimate the 6th term.
\begin{align*}
&\int \overline{A}\nabla_Y\overline{H}(\eta)\cdot PA^{-1}(\idx-NP^tP)xf\mu(dx)
\\&\quad=\int P^t\overline{A}\nabla_Y\overline{H}(\eta)\cdot A^{-1}(\idx-NP^tP)xf\mu(dx)
\\&\quad\leq\left(\int P^t\overline{A}\nabla_Y\overline{H}(\eta)\cdot A^{-1}NP^t\overline{A}\nabla_Y\overline{H}(\eta)f\mu(dx)\right)^\frac{1}{2}\left(\frac{1}{N}\int (\idx-NP^tP)x\cdot A^{-1}(\idx-NP^tP)xf\mu(dx)\right)^\frac{1}{2}.
\end{align*}
Since
\begin{equation*}
P^t\overline{A}\nabla_Y\overline{H}(\eta)\cdot A^{-1}NP^t\overline{A}\nabla_Y\overline{H}(\eta)=\overline{A}\nabla_Y\overline{H}(\eta)\cdot PA^{-1}NP^t\overline{A}\nabla_Y\overline{H}(\eta)=\overline{A}\nabla_Y\overline{H}(\eta)\cdot\nabla_Y\overline{H},
\end{equation*}
and from~\eqref{eq: ineq3},
we have
\begin{equation}
\int \overline{A}\nabla_Y\overline{H}(\eta)\cdot PA^{-1}(\idx-NP^tP)xf\mu(dx)\leq \left(\overline{A}\nabla_Y \overline{H}(\eta)\cdot\nabla_Y\overline{H}(\eta)\right)^\frac{1}{2}\left(\frac{\gamma}{NM^2}\int |x|^2f\mu(dx),\right)^\frac{1}{2}.\label{eq: 6th term}
\end{equation}
Next, we estimate the 7th term.
\begin{align}
&\left|\int PJA^{-1}NP^t(y-\eta)\cdot P\mathrm{cov}_{\mu(dx|y)}(f,\nabla H)\overline{\mu}(dy)\right|\nonumber
\\&\qquad \leq \left(\int |PJA^{-1}NP^t(y-\eta)|^2\overline{f}\overline{\mu}(dy)\cdot\int\frac{1}{\overline{f}}|P\mathrm{cov}_{\mu(dx|y)}(f,\nabla H)|^2_Y\overline{\mu}(dy)\right)^\frac{1}{2}\nonumber
\\&\qquad \overset{\eqref{eq: ineq1},\eqref{eq: cov estimate}}{\leq} \left(\frac{c}{\tau}\gamma\frac{\kappa^2}{\rho^2}\frac{1}{M^2}\int|y-\eta|^2\overline{f}\overline{\mu}(dy)\int\frac{1}{Nf}\nabla f\cdot A\nabla f\mu(dx)\right)^\frac{1}{2}\nonumber
\\&\qquad \leq \frac{2c\gamma\kappa^2}{\tau\lambda\rho^2M^2}\int\frac{1}{Nf}\nabla f\cdot A\nabla f\mu(dx)+\frac{\lambda}{8}\int |y-\eta|^2\overline{f}\overline{\mu}(dy).\label{eq: 7th term}
\end{align}
For the 8th term, we have
\begin{align*}
&\int PJA^{-1}(\idx-NP^tP)x\cdot P\nabla f\mu(dx)=\frac{1}{N}\int NP^tPJA^{-1}(\idx-NP^tP)x\nabla f\mu(dx)
\\&\leq \left(\int \frac{1}{N}NP^tPJA^{-1}(\idx-NP^tP)x \cdot A^{-1}NP^tPJA^{-1}(\idx-NP^tP)xf\mu(dx)\right)^\frac{1}{2}\left(\int \frac{1}{Nf}\nabla f\cdot A\nabla f\mu(dx)\right)^\frac{1}{2}.
\end{align*}
Since
\begin{align*}
&\langle NP^tPJA^{-1}(\idx-NP^tP)x,A^{-1}NP^tPJA^{-1}(\idx-NP^tP)x\rangle
\\&\qquad \overset{(viii)}{\leq} \frac{1}{\tau}\langle NP^tPJA^{-1}(\idx-NP^tP)x,NP^tPJA^{-1}(\idx-NP^tP)x\rangle
\\&\qquad \leq \frac{1}{\tau}\langle JA^{-1}(\idx-NP^tP)x,JA^{-1}(\idx-NP^tP)x\rangle
\\&\qquad=\frac{1}{\tau}\langle -J^2A^{-1}(\idx-NP^tP)x,A^{-1}(\idx-NP^tP)x\rangle
\\&\qquad\overset{(ix)}{\leq} \frac{c}{\tau}\langle (\idx-NP^tP)x,A^{-1}(\idx-NP^tP)x\rangle
\\&\qquad\overset{\eqref{eq: ineq3}}{\leq} \frac{c\gamma}{\tau M^2}|x|^2,
\end{align*}
we obtain
\begin{equation}
\int PJA^{-1}(\idx-NP^tP)x\cdot P\nabla f\mu(dx)
\leq \left(\frac{c\gamma}{\tau M^2}\int \frac{1}{N}|x|^2f\mu(dx)\cdot \int \frac{1}{Nf}\nabla f\cdot A\nabla f\mu(dx)\right)^\frac{1}{2}\label{eq: 8th term}.
\end{equation}
Next we estimate the 9th term. Set $z=JA^{-1}(x-NP^t\eta)$. We have
\begin{align*}
&\frac{1}{N}\int (\idx-NP^tP)JA^{-1}(x-NP^t\eta)\cdot \nabla f\mu(dx)=\frac{1}{N}\int (\idx-NP^tP)z\cdot \nabla f\mu(dx)
\\&\leq \left(\int\frac{1}{N}(\idx-NP^tP)z\cdot A^{-1}(\idx-NP^tP)zf\mu(dx)\int \frac{1}{Nf}\nabla f\cdot A\nabla f \mu(dx)\right)^\frac{1}{2}
\\&\overset{\eqref{eq: ineq3}}{\leq} \left(\frac{\gamma}{M^2}\int \frac{1}{Nf}\nabla f\cdot A\nabla f\mu(dx)\int\frac{1}{N}|z|^2f\mu(dx)\right)^\frac{1}{2}.
\end{align*}
We estimate the second integral inside the parentheses. It holds that
\begin{align*}
|z|^2&=\langle JA^{-1}(x-NP^t\eta),JA^{-1}(x-NP^t\eta)\rangle =\langle -J^2A^{-1}(x-NP^t\eta),A^{-1}(x-NP^t\eta)\rangle
\\&\overset{\eqref{asumpt: weak anti-symmetric}, (viii)}{\leq}c\langle A^{-1}(x-NP^t\eta),x-NP^t\eta\rangle
\\&\leq 2c\left(\langle A^{-1}NP^t(Px-\eta),NP^t(Px-\eta)\rangle+\langle A^{-1}(\idx-NP^tP)x,(\idx-NP^tP)x\rangle\right)
\\&\overset{\eqref{eq: ineq3}}{\leq} 2c\left(\frac{1}{\tau}|NP^t(Px-\eta)|^2+\frac{\gamma}{M^2}|x|^2\right) =2c\left(\frac{N}{\tau}|Px-\eta|^2+\frac{\gamma}{M^2}|x|^2\right).
\end{align*}
Therefore,
\begin{align}
&\frac{1}{N}\int (\idx-NP^tP)JA^{-1}(x-NP^t\eta)\cdot \nabla f\mu(dx)\nonumber
\\&\qquad\leq \left(\frac{\gamma}{M^2}\int\frac{1}{Nf}\nabla f\cdot A\nabla f\mu(dx)\right)^\frac{1}{2}\left(\frac{2c}{\tau}\int |y-\eta|^2\overline{f}\overline{\mu}(dy)+\frac{2c\gamma}{M^2N}\int |x|^2f\mu(dx)\right)^\frac{1}{2}\nonumber
\\&\qquad\leq \frac{4\gamma c}{M^2\lambda \tau}\int\frac{1}{Nf}\nabla f\cdot A\nabla f\mu(dx)+\frac{\lambda}{8}\int |y-\eta|^2\overline{f}\overline{\mu}(dy)\nonumber
\\&\qquad\qquad+\left(\frac{\gamma}{M^2}\int\frac{1}{Nf}\nabla f\cdot A\nabla f\mu(dx)\right)^\frac{1}{2}\left(\frac{2c\gamma}{M^2N}\int|x|^2f\mu(dx)\right)^\frac{1}{2}\label{eq: 9th term}.
\end{align}
Finally, we estimate the 10th term.
Since
\begin{align*}
&\langle PA^{-1}NP^t\bar{J}\nabla \bar{H}(\eta), \bar{J}\nabla \bar{H}(\eta) \rangle = \langle PJA^{-1}NP^t\nabla \bar{H}(\eta), \bar{A}PJA^{-1}NP^t\nabla \bar{H}(\eta) \rangle \notag \\
&\overset{(x)}{\leq} c|\nabla \bar{H}(\eta)|^2 \leq \frac{c}{\tau}\langle\bar{A}\nabla_Y\bar{H}(\eta),\nabla_Y\bar{H}(\eta)\rangle,
\end{align*}
we have
\begin{align}
&\left|\int A^{-1}(\idx-NP^tP)x\cdot P^t\overline{J}\nabla_Y\overline{H}(\eta)f\mu(dx)\right|\notag
\\&\qquad\leq\left(\int P^t\overline{J}\nabla_Y\overline{H}(\eta)\cdot A^{-1}NP^t\overline{J}\nabla_Y\overline{H}(\eta)f\mu(dx)\right)^\frac{1}{2}\left(\int\frac{1}{N}(\idx-NP^tP)x\cdot A^{-1}(\idx-NP^tP)xf\mu(dx)\right)^\frac{1}{2}\nonumber
\\&\qquad\leq\left(\frac{c}{\tau}\overline{A}\nabla_Y\overline{H}(\eta)\cdot\nabla_Y\overline{H}(\eta)\right)^\frac{1}{2}\left(\frac{\gamma}{NM^2}\int|x|^2f\mu(dx)\right)^\frac{1}{2}\label{eq: the 10th term}.
\end{align}
Summing up from~\eqref{eq: 2nd term} to~\eqref{eq: the 10th term}, we obtain \eqref{eq: estimate derivative of Theta}
\end{proof}
\begin{proof}[\textbf{Proof of Theorem~\ref{theo: abstract theorem}}]
Denote by $R(t)$ the right hand side of~\eqref{eq: estimate derivative of Theta}. Set $D=\frac{8c\Lambda^2}{\lambda}$.  For any $0<t\leq T$, we have
\begin{align}
\frac{d}{dt}\left(e^{-Dt}\Theta(t)\right)+  e^{-DT}\frac{\lambda}{8}\int |y-\eta|^2\bar{f}\bar{\mu}(dy)&\leq \frac{d}{dt}\left(e^{-Dt}\Theta(t)\right)+  e^{-Dt}\frac{\lambda}{8}\int |y-\eta|^2\bar{f}\bar{\mu}(dy)\notag
\\&\leq e^{-Dt} R(t)\leq R(t).\label{eq: differential form}
\end{align}
 Integrating~\eqref{eq: differential form} with respect to time, for any $0<t\leq T$, we have
 \begin{equation}
e^{-DT}\Theta(t)+\frac{\lambda}{8}e^{-DT}\int_t^T{\int{|y-\eta|^2\bar{f}\bar{\mu}(dy)}dt} \leq \Theta(0)+\int_0^T R(t)dt.
 \end{equation}
 It follows that for any $T>0$
 \begin{equation}
 \label{eq:temp}
 \max \left\{\sup_{t\in(0,T)} \Theta(t),\frac{\lambda}{8}\int_0^T\int_Y|y-\eta|^2\bar{f}\bar{\mu}(dy)\right\}\leq e^{DT}\left(\Theta(0)+\int_0^TR(t)dt\right).
 \end{equation}
It remains to take care of each term in the right hand side of~\eqref{eq:temp}. Recall that $R$ is the right hand side of~\eqref{eq: estimate derivative of Theta}. Let $a=1+\sqrt{\frac{c}{\tau}}+\frac{\sqrt{2c\gamma}}{M},b=1+\sqrt{\frac{c}{\tau}}$. We have the following estimates
\begin{align*}
&\int_0^T\int\frac{1}{N}|x|^2f(t,x)\mu(dx)dt\overset{\eqref{eq: bound of second moment}}{\leq}2\left(\alpha+\frac{2C_1}{\hat{\rho}}	\right)T;
\\&\int_0^T\int\frac{1}{Nf}\nabla\cdot A\nabla f\mu(dx)dt\overset{\eqref{eq: bound of Fisher term}}{\leq }C_1;
\\&\int_0^T\left(\int\frac{1}{N}|x|^2f\mu(dx)\right)^\frac{1}{2}\left(a\left(\int\frac{1}{Nf}\nabla f\cdot A\nabla f\mu(dx)\right)^\frac{1}{2}+b\left(\bar{A}\nabla_Y\bar{H}(\eta)\cdot\nabla_Y\bar{H}(\eta)\right)^\frac{1}{2}\right) dt
\\&\qquad\leq \left(\int_0^T\int\frac{1}{N}|x|^2f\mu(dx)dt\right)^\frac{1}{2}\left(a\left(\int_0^T\int\frac{1}{N}\nabla f\cdot A\nabla f\mu(dx)dt\right)^\frac{1}{2}+b\left(\int_0^T\bar{A}\nabla_Y\bar{H}(\eta)\cdot\nabla_Y\bar{H}(\eta)dt\right)^\frac{1}{2}\right)
\\&\qquad\leq\sqrt{2T}\left(\alpha+\frac{2C_1}{\hat{\rho}}\right)^\frac{1}{2}\left(a\sqrt{C_1}+\sqrt{2}b(H(\eta_0)-H(\eta_T))+CT(1+e^{CT}\overline{H}(\eta_0))^\frac{1}{2}\right).
\end{align*}
Substituting these estimate to~\eqref{eq:temp} concludes the proof of Theorem~\ref{theo: abstract theorem}.
\end{proof}

\subsection{Sketch of proof of Theorem \ref{theo: abstract entropy}}

In this section, we give the main arguments of the proof of Theorem \ref{theo: abstract entropy}, which exactly follows the method of \cite{Fat13}. Recall that the local Gibbs state is given by $G(x)\mu(dx)$, where $G(x) = Z^{-1}\exp(NP^t\bar{H}(\eta)\cdot x)$.

\begin{itemize}
\item First, we decompose the relative entropy with respect to the local Gibbs state into a macroscopic component and a fluctuations component. Since $G(x)$ only depends  on the macroscopic profile $y = Px$, we have
$$\Ent_{G\mu}(f\mu) = \Ent_{\bar{G}\bar{\mu}}(\bar{f}\bar{\mu}) + \int_Y{\Ent_{\mu(dx|y)}(f\mu)\bar{G}(y)\bar{\mu}(dy)}.$$

\item The fluctuations component $\int_0^T{ \int_Y{\Ent_{\mu(dx|y)}(f\mu)\bar{G}(y)\bar{\mu}(dy)}dt}$ can be bounded using the logarithmic Sobolev inequality for $\mu(dx|y)$, assumption (vi) and \eqref{eq: bound of Fisher term} in Lemma \ref{lem : bounds entropy production}.

\item For the macroscopic component, since $\bar{G}\bar{\mu}$ is log-concave, we can use the HWI inequality of \cite{OV}, which states that
$$\Ent_{\bar{G}\bar{\mu}}(\bar{f}\bar{\mu}) \leq W_2(\bar{f}\bar{\mu}, \bar{G}\bar{\mu})\sqrt{I_{\bar{G}\bar{\mu}}(\bar{f}\bar{\mu})},$$
where the Wasserstein distance $W_2$ is taken with respect to the norm $|\cdot |_Y$, and $I$ is the Fisher information
$$I_{\bar{G}\bar{\mu}}(\bar{f}\bar{\mu}) :=\int \frac{|\nabla (\bar{f}/\bar{G})|^2}{\bar{f}/\bar{G}}\bar{G}d\bar{\mu}. $$
As a consequence, to obtain convergence in relative entropy, we only require convergence in Wasserstein distance and a bound on the Fisher information.

\item We already have a bound on $\int_0^T{W_2(\bar{f}\bar{\mu}, \delta_{\eta(t)})^2dt}$ from Theorem \ref{theo: abstract theorem}. Moreover,
$$W_2(\bar{G}\bar{\mu}, \delta_{\eta})^2 \leq \frac{M}{\lambda N}$$
by Proposition 4.1 of \cite{Fat13}. A bound on $\int_0^T{W_2(\bar{f}\bar{\mu}, \bar{G}\bar{\mu})dt}$ immediately follows from the triangle inequality.

\item Finally, the time-integral of the Fisher information can be bounded using the bounds on the entropy production of Lemma \ref{lem : bounds entropy production}. This concludes the proof of (a).

\item (b) can be deduced from (a) using elementary inequalities and the bound
\begin{align}
&\left| \frac{1}{N}\int{\Phi(G^{\eta})d\mu} - \bar{H}(\eta) \right| \notag \\
&\leq \frac{(M-1)}{2N} \max\left(\left|\log\left(\frac{\Gamma(Y,|\cdot|_Y)^{2/(M-1)}}{\Lambda N}\right)\right|,\left|\log\left(\frac{\Gamma(Y,|\cdot|_Y)^{2/(M-1)}}{\lambda N}\right)\right| \right) \notag \\
& \hspace{5mm} +\sqrt{\frac{M}{\lambda N}}|\nabla \bar{H}(\eta)|_Y, \notag
\end{align}
which was proven in Proposition 4.1 of \cite{Fat13}.
\end{itemize}

\section{Application to weakly asymmetric Kawasaki dynamics}
In this section, we prove Theorem~\ref{theo: concrete hydro}. First, we give a precise definition of the notion of weak solution to the limiting equation~\eqref{eq: limiting eqn}.
\begin{definition}
$\zeta=\zeta(t,\theta)$ is called a weak solution of~\eqref{eq: limiting eqn} on $[0,T]\times\T^1 $ if
\begin{equation}
\label{eq: regularity of weak sol}
\zeta\in L_t^\infty(L_{\theta}^2),\quad\frac{\partial \zeta}{\partial t}\in L_t^2(H_{\theta}^{-1}),\quad \varphi'(\zeta)\in L_t^2(L_{\theta}^2),
\end{equation}
and
\begin{equation}
\label{eq: weak form}
\left\langle g,\frac{\partial \zeta}{\partial t}\right\rangle_{H^{-1}}=-\int_{\T^1}g\varphi'(\zeta)\,d\theta+\int_{\T^1}G\varphi'(\zeta)\,d\theta,~\text{for all}~g\in L^2(\T^1),~\text{for almost every }~ t\in[0,T],
\end{equation}
where $G$ is the (unique up to a set of Lebesgue measure $0$) function on the torus
such that $\int_{\T^1}G\,d\theta =0 $ and $G'= g$.
\end{definition}

As in Corollary \ref{cor: abstract theorem}, consider a sequence $\{M_{\ell}, N_{\ell}\}_{\ell = 1}^{\infty}$ such that 
$$M_{\ell} \uparrow \infty; \hspace{3mm} N_{\ell} \uparrow \infty; \hspace{3mm} \frac{N_{\ell}}{M_{\ell}} \uparrow \infty.$$
Let $\bar{\eta}^{\ell}_0$ be a step-function approximation of $\zeta_0$, such that 
\begin{equation}
||\bar{\eta}^{\ell}_0 - \zeta_0 ||_{L^2} \underset{\ell \uparrow \infty}{\longrightarrow} \hspace{1mm} 0.
\end{equation}
Consider $\eta^{\ell}$ the solutions to 
$$\frac{d\eta^{\ell}}{dt} = -(\bar{A} + \bar{J})\nabla_Y \bar{H}(\eta^{\ell}), \hspace{3mm} \eta^{\ell}(0) = \eta^{\ell}_0.$$

To obtain Theorem \ref{theo: concrete hydro} from Corollary \ref{cor: abstract theorem}, we shall need to study the convergence of the sequence $\eta^{\ell}$. It is given by the following result: 

\begin{proposition} \label{prop: convergence eta}
With the notations above, the sequence of step functions $\bar{\eta}^{\ell}$ converges strongly in $L_t^{\infty}(H_{\theta}^{-1})$ to the unique weak solution of ~\eqref{eq: limiting eqn} with initial condition $\zeta_0$.
\end{proposition}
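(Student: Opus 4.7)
The plan is the standard compactness method: derive uniform a priori bounds on $\bar\eta^\ell$, extract a convergent subsequence, pass to the limit in the weak formulation of the ODE defining $\eta^\ell$, and invoke uniqueness of the weak solution of~\eqref{eq: limiting eqn} to upgrade subsequential convergence to convergence of the full sequence.

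\textbf{A priori estimates.} The Gronwall-type argument used in the proof of Lemma~\ref{lem : bounds entropy production} for~\eqref{eq: bound of macro Fisher term} applies verbatim to each $\eta^\ell$ with constants uniform in $\ell$, and yields
\[
\sup_{0\le t\le T}\bar H_\ell(\eta^\ell(t))\le e^{CT}(\bar H_\ell(\eta^\ell_0)+1),\qquad \int_0^T\langle \bar A_\ell\nabla\bar H_\ell(\eta^\ell),\nabla\bar H_\ell(\eta^\ell)\rangle\,dt\le CTe^{CT}(\bar H_\ell(\eta^\ell_0)+1).
\]
The quantity $\bar H_\ell(\eta^\ell_0)$ is uniformly bounded because $\bar\eta^\ell_0\to\zeta_0$ in $L^2(\T)$ and $\bar H_\ell$ has at most quadratic growth in the $L^2$ norm (using $\psi_K\to\varphi$ in $C^2$ and the uniform upper Hessian bound in (iii)). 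The first bound gives a uniform $L^\infty_t L^2_\theta$ estimate on $\bar\eta^\ell$. Since $\bar A_\ell$ is a rescaled discrete Laplacian at spacing $1/M_\ell$, the dissipation integral translates into a uniform (discrete) $L^2_t H^1_\theta$ bound on the step-function interpolation of $\psi_{K_\ell}'(\eta^\ell)$; by uniform convexity of $\psi_{K_\ell}$, this transfers to a uniform (discrete) $L^2_t H^1_\theta$ bound on $\bar\eta^\ell$ itself. Finally, the ODE $d\eta^\ell/dt=-(\bar A_\ell+\bar J_\ell)\nabla\bar H_\ell(\eta^\ell)$ bounds $\partial_t\bar\eta^\ell$ in $L^2_t H^{-1}_\theta$, since $\bar A_\ell+\bar J_\ell$ acts as a first-order operator at the step-function scale.

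\textbf{Compactness and passage to the limit.} A discrete Aubin--Lions argument then produces a subsequence (still denoted $\bar\eta^\ell$) converging strongly in $L^2_t L^2_\theta$ and in $C_t H^{-1}_\theta$ to some $\zeta$ having the regularity~\eqref{eq: regularity of weak sol}. Because $\|\psi_{K_\ell}'-\varphi'\|_{C^0}\to 0$ and the convergence is strong in $L^2$, one has $\psi_{K_\ell}'(\bar\eta^\ell)\to\varphi'(\zeta)$ in $L^2_{t,\theta}$. Testing the discrete ODE against a step-function approximation $g_\ell$ of an arbitrary $g\in L^2(\T)$ via the appropriate discrete $H^{-1}$ pairing, and using the standard identification of $\bar A_\ell$ with $-\partial^2_\theta$ and $\bar J_\ell$ with $\partial_\theta$ on smooth functions (a Taylor expansion of finite differences at scale $1/M_\ell$), one passes to the limit and recovers~\eqref{eq: weak form}. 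The initial condition $\zeta(0,\cdot)=\zeta_0$ is preserved by the $C_t H^{-1}_\theta$ continuity together with $\bar\eta^\ell_0\to\zeta_0$ in $L^2$.

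\textbf{Uniqueness and main obstacle.} If $\zeta_1,\zeta_2$ are two weak solutions with the same initial data, setting $v=\zeta_1-\zeta_2$ and testing the equation for $v$ against $(-\partial^2_\theta)^{-1}v$ gives
\[
\tfrac12\tfrac{d}{dt}\|v\|_{H^{-1}}^2 = -\int_{\T}(\varphi'(\zeta_1)-\varphi'(\zeta_2))v\,d\theta + \int_{\T}\partial_\theta(\varphi'(\zeta_1)-\varphi'(\zeta_2))\cdot(-\partial^2_\theta)^{-1}v\,d\theta.
\]
The first term is $\le -\lambda\|v\|_{L^2}^2$ by monotonicity of $\varphi'$, while the second is bounded by $\Lambda\|v\|_{L^2}\|v\|_{H^{-1}}$ (using $\|\partial_\theta(-\partial^2_\theta)^{-1}v\|_{L^2}=\|v\|_{H^{-1}}$) and absorbed into the diffusive term via Young's inequality, leaving $\frac{d}{dt}\|v\|_{H^{-1}}^2\le C\|v\|_{H^{-1}}^2$, whence $v\equiv 0$ by Gronwall. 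Uniqueness promotes subsequential convergence to convergence of the full sequence, giving the $L^\infty_t H^{-1}_\theta$ claim. The main technical obstacle is converting the discrete bounds phrased in the weighted $Y_{M_\ell}$ inner product into continuous Sobolev bounds sharp enough to apply Aubin--Lions; this is the same discrete-to-continuous dictionary as in~\cite{GOVW09} for the symmetric case, with the extra antisymmetric term $\bar J_\ell$ handled via its commutativity with $\bar A_\ell$ (assumption~(x)).
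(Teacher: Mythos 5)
Your proposal takes a genuinely different route from the paper on the compactness step, but it also contains a gap that the paper's approach is specifically designed to handle.

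\textbf{Different route.} You propose strong compactness: derive a discrete $L^2_tH^1_\theta$ bound on $\bar\eta^\ell$, apply a (discrete) Aubin--Lions argument, and pass to the limit directly in the weak form of the PDE using strong $L^2_{t,\theta}$ convergence. The paper instead follows GOVW09 and uses a variational-inequality (Minty-type) characterization of the macroscopic ODE (Lemma~\ref{lem: variational}), which exploits convexity of $\bar H$ to pass to the limit with only weak-* $L^\infty(L^2)$ convergence plus strong $L^\infty(H^{-1})$ convergence (itself from an Arzel\`a--Ascoli argument). The payoff of the paper's route is that it never needs to prove $L^2_{t,\theta}$ compactness of step functions, which for piecewise-constant interpolations is technically delicate (they are not in $H^1$, so one must either reinterpolate piecewise-linearly or use a discrete Aubin--Lions theorem with a careful accounting of how the discrete $H^1$ seminorm scales). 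Your approach is plausible in principle but would require you to carry out this discrete compactness argument, which you only sketch. Your uniqueness argument (Lemma~\ref{lem: uniqueness}) is essentially identical to the paper's.

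\textbf{The real gap.} The phrase ``the standard identification of $\bar A_\ell$ with $-\partial^2_\theta$ and $\bar J_\ell$ with $\partial_\theta$ on smooth functions (a Taylor expansion of finite differences at scale $1/M_\ell$)'' misrepresents what $\bar A$ and $\bar J$ are. They are \emph{not} finite-difference stencils on the coarse grid; they are defined by the nonlocal coarse-graining formulas $\bar A^{-1}=PA^{-1}NP^t$ and $\bar J=\bar A\,PA^{-1}NJP^t$, and a Taylor expansion of finite differences does not apply. For $\bar A$, the identification with $(-\partial^2_\theta)$ in the scaling limit is the content of the norm-equivalence Lemma~\ref{lem: norms} (inherited from GOVW09). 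For the antisymmetric part, what actually enters the weak form of the macroscopic ODE is $PA^{-1}NJP^t$ acting on the test function, and identifying its limit is precisely the new ingredient in the non-reversible setting: Lemma~4.5 of the paper carries out an explicit computation, writing $A=DD^T$, $J=\tfrac12(D^T-D)$, deriving $JA^{-1}=\tfrac12(D^{-1}-(D^T)^{-1})$, and showing that $-\overline{PA^{-1}NJP^t\xi}$ equals the discrete primitive $\Upsilon_\xi$ minus a vanishing correction $\tfrac{1}{2M}\bar\xi$, so that it converges to the antiderivative $G$ exactly when $\bar\xi$ converges in $H^{-1}$ with a uniform $L^2$ bound. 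Without this identification (or an equivalent argument), there is no way to recognize the first-order term $\partial_\theta\varphi'(\zeta)$ in the limit, so your passage to the limit in the weak form is incomplete. This is the step you need to supply; the rest of the scheme (a priori bounds via the Gronwall argument from Lemma~\ref{lem : bounds entropy production}, compactness, uniqueness) is sound modulo the technical work of the discrete Aubin--Lions lemma.
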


The key estimate allowing us to pass to the limit is the fact that, when $N$ goes to infinity, the Euclidean product associated to $A^{-1}$ behaves like the $H^{-1}$ norm. This is the content of the following lemma: 

\begin{lemma}
\label{lem: norms}
There exists $C < + \infty$ such that, for any $x \in X$, if $\bar{x}$ is the associated step function (defined by (\ref{def: bar x})), then
$$\frac{1}{C}||\bar{x}||_{H^{-1}}^2 \leq \frac{1}{N}\langle A^{-1}x, x \rangle \leq C||\bar{x}||_{H^{-1}}^2.$$
Moreover, if $\bar{x}$ is bounded in $L^2$, then 
$$\left| ||\bar{x}||_{H^{-1}}^2 -  \frac{1}{N}\langle A^{-1}x, x \rangle\right| \leq \frac{C}{N}.$$
\end{lemma}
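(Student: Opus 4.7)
The plan is to prove the exact identity
\begin{equation*}
\|\bar x\|_{H^{-1}}^2 \;=\; \frac{1}{N}\langle A^{-1}x, x\rangle \;-\; \frac{1}{6N^2}\|\bar x\|_{L^2}^2,
\end{equation*}
from which both assertions of the lemma follow directly.

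First, I would observe that $A = N^2 L$, where $L$ is the discrete periodic Laplacian on $\mathbb{Z}/N\mathbb{Z}$, so that $\frac{1}{N}\langle A^{-1}x,x\rangle = \frac{1}{N^3}\langle L^{-1}x,x\rangle$. Let $u$ be the mean-zero solution of $Lu=x$ (well defined because $\sum x_i = 0$); summation by parts gives $\langle L^{-1}x,x\rangle = \langle u, Lu\rangle = \sum_i (u_{i+1}-u_i)^2$. On the continuous side, let $g$ be the primitive of $\bar x$ with $\int_0^1 g\,d\theta=0$, and set $c_i := g((i-1)/N)$. Since $\bar x$ is piecewise constant, $g$ is continuous piecewise-linear with slope $x_i$ on $[(i-1)/N,i/N]$, so its nodal values satisfy the telescoping relation $c_{i+1}-c_i = x_i/N$, and the mean-zero condition fixes the additive constant. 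A short bookkeeping computation matches these two primitives (both are mean-zero solutions of the same discrete Poisson problem) and gives $u_i - u_{i-1} = N c_i$, hence
\begin{equation*}
\langle L^{-1}x,x\rangle = N^2\sum_i c_i^2, \qquad \frac{1}{N}\langle A^{-1}x,x\rangle = \frac{1}{N}\sum_i c_i^2.
\end{equation*}

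Second, I would expand the $H^{-1}$ norm directly. Integrating the piecewise-quadratic $g^2$ on each interval of length $1/N$ yields
\begin{equation*}
\|\bar x\|_{H^{-1}}^2 = \int_0^1 g^2 d\theta = \frac{1}{N}\sum_i c_i^2 + \frac{1}{N^2}\sum_i c_i x_i + \frac{1}{3N^3}\sum_i x_i^2.
\end{equation*}
Writing $x_i = N(c_{i+1}-c_i)$ and summing by parts (using cyclic indexing), one gets $\sum_i c_i x_i = N\sum_i c_i(c_{i+1}-c_i) = -\tfrac{N}{2}\sum_i(c_{i+1}-c_i)^2 = -\tfrac{1}{2N}\sum_i x_i^2$, and substitution produces the announced identity.

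Third, the identity trivially gives $\|\bar x\|_{H^{-1}}^2 \le \frac{1}{N}\langle A^{-1}x,x\rangle$. For the reverse comparison I would use the spectral bound $\lambda_{\max}(L)\le 4$ (the eigenvalues are $4\sin^2(\pi k/N)$), so $\langle L^{-1}x,x\rangle \ge \tfrac{1}{4}\|x\|^2$, which rewrites as $\|\bar x\|_{L^2}^2 \le 4N^2\cdot\frac{1}{N}\langle A^{-1}x,x\rangle$. Plugging this into the identity gives $\frac{1}{N}\langle A^{-1}x,x\rangle \le \|\bar x\|_{H^{-1}}^2 + \tfrac{2}{3}\cdot\frac{1}{N}\langle A^{-1}x,x\rangle$, hence $\frac{1}{N}\langle A^{-1}x,x\rangle \le 3\|\bar x\|_{H^{-1}}^2$, so $C=3$ works. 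For the second assertion, the identity immediately gives $\bigl|\|\bar x\|_{H^{-1}}^2 - \tfrac{1}{N}\langle A^{-1}x,x\rangle\bigr| = \tfrac{1}{6N^2}\|\bar x\|_{L^2}^2$, which is $O(N^{-2})$ when $\|\bar x\|_{L^2}$ is bounded, i.e.\ even stronger than the stated $O(N^{-1})$. The only slightly delicate step is the first one: matching the sign conventions and the mean-zero normalizations so that the discrete and continuous primitives are identified cleanly by $u_i - u_{i-1} = N c_i$; once that is done everything else is an elementary calculation.
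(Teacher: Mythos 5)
Your proof is correct. The paper itself gives no proof of this lemma -- it simply refers the reader to Section 6.3 of \cite{GOVW09} -- so your argument is a self-contained replacement rather than a variant. Moreover, the exact identity
\[
\|\bar x\|_{H^{-1}}^2 \;=\; \frac{1}{N}\langle A^{-1}x,x\rangle \;-\; \frac{1}{6N^2}\|\bar x\|_{L^2}^2
\]
is strictly more informative than the statement of the lemma: it yields the comparison with the explicit constant $C=3$ (one side is free since the error term is nonnegative; for the other side your spectral bound $\lambda_{\max}(L)\le 4$ gives $\|\bar x\|_{L^2}^2 \le 4N\cdot\frac{1}{N}\langle A^{-1}x,x\rangle$ after unwinding scalings, hence the factor $2/3$ absorbs into a $3$), and it upgrades the advertised $O(N^{-1})$ error to the sharper $O(N^{-2})$ when $\|\bar x\|_{L^2}$ is bounded. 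I verified the two main steps: the integration of the piecewise-quadratic $g^2$ over each cell does produce $\frac1N\sum c_i^2+\frac1{N^2}\sum c_ix_i+\frac1{3N^3}\sum x_i^2$, and the cyclic Abel summation $\sum_i c_i x_i=-\frac{1}{2N}\sum_i x_i^2$ is right, so the $\frac12-\frac13=\frac16$ cancellation is genuine.

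One small sign slip, which you yourself flagged as the delicate point: with $c_{i+1}-c_i=x_i/N$ and $u$ the mean-zero solution of $Lu=x$, setting $v_i:=u_i-u_{i-1}$ gives $v_i-v_{i+1}=x_i$, which is the \emph{opposite}-sign recursion to $N(c_{i+1}-c_i)=x_i$; the correct identification is $u_i-u_{i-1}=-Nc_i$ (both sequences have zero cyclic sum, which fixes the constant). This is harmless, since the next step only uses $\sum_i(u_{i+1}-u_i)^2=N^2\sum_i c_i^2$, which is insensitive to the sign, but it is worth stating correctly so the reader does not stumble on it.
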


\noindent These estimates have been proven in Section 6.3 of \cite{GOVW09}. 

\noindent We delay the proof of Proposition \ref{prop: convergence eta}, and first prove Theorem \ref{theo: concrete hydro}

\begin{proof}[Proof of Theorem \ref{theo: concrete hydro}]
Our aim is to apply Corollary \ref{cor: abstract theorem}. To do this, we need to check that assumptions (i) to (x) hold with uniform constants. Assumptions (i) to (vii) have been checked in \cite{GOVW09}, and assumption (viii) in \cite{Fat13}. Assumption (x) can be immediately checked by the direct computation of $JA$ and $AJ$. Finally, it is easy to see that for any $x \in X$, we have
\begin{align}
\langle -J^2x, x \rangle &= |Jx|^2 \notag \\
&= \frac{N^2}{4}\underset{i = 1}{\stackrel{N}{\sum}} \hspace{1mm} (x_{i+1} - x_{i-1})^2 \notag \\
&\leq \frac{N^2}{4}\underset{i = 1}{\stackrel{N}{\sum}} 2(x_{i+1} - x_i)^2 + 2(x_i - x_{i-1})^2 \notag \\
&= N^2 \underset{i = 1}{\stackrel{N}{\sum}} (x_{i+1} - x_i)^2 \notag \\
&= \langle Ax, x \rangle, 
\end{align}
and therefore assumption (ix) holds with $c = 1$.

Applying Corollary \ref{cor: abstract theorem}, we get 
$$\underset{\ell \uparrow \infty}{\lim} \hspace{1mm} \underset{0 \leq t \leq T}{\sup} \hspace{1mm} \int{\langle (x - NP^t\eta^{\ell}(t)), A^{-1}(x - NP^t\eta^{\ell}(t))\rangle f(t,x)\mu(dx)} = 0.$$
By Lemma \ref{lem: norms}, this implies
$$\underset{\ell \uparrow \infty}{\lim} \hspace{1mm} \underset{0 \leq t \leq T}{\sup} \hspace{1mm} \int{||\bar{x} - \bar{\eta}^{\ell}(t)||_{H^{-1}}^2 f(t,x)\mu(dx)} = 0.$$
Applying Proposition \ref{prop: convergence eta} and using the triangle inequality then concludes the proof.
\end{proof}


We now turn to the proof of Proposition \ref{prop: convergence eta}. It is based on the following six lemmas, and closely follows the method of \cite{GOVW09}, with additional arguments to take into account the extra first-order term that appear due to the addition of $J$ to the dynamics.

\begin{lemma} 
\label{lem: variational}
Assume $\bar H$ is convex. Then $\eta$ satisfies~\eqref{eq: macro eqn} with initial condition $\eta(0)=\eta_0$ if and only if
\begin{equation}
\label{eq: variational form of the macro eqn}
2\int_0^T\bar{H}(\eta)\beta(t)dt\leq \int_0^T\left[\bar{H}(\eta+g)+\bar{H}(\eta-PA^{-1}NJP^t g)\right]\beta(t)dt-\int_0^T\langle g,(\bar{A})^{-1}\eta\rangle_Y\dot{\beta}(t)dt,
\end{equation}
for all $g\in Y$ and smooth $\beta\colon [0,T]\rightarrow [0,\infty)$.\\
\vspace{3mm}
Similarly, assume that $\varphi$ is convex. Then $\zeta$ satisfies~\eqref{eq: weak form} if and only if
\begin{align}
\label{eq: variational form of the limiting eqn}
& 2\int_0^T\int_{\T^1}\varphi(\zeta(t,\theta))\beta(t)\,d\theta\,dt\notag
\\&\qquad\leq \int_0^T\int_{\T^1}\left[\varphi(\zeta(t,\theta)+g(\theta))+\varphi(\zeta(t,\theta)-G(\theta))\right]\beta(t)\,d\theta\,dt-\int_0^T\langle g(\cdot),\zeta(t,\cdot)\rangle_{H^{-1}}\dot{\beta}(t)dt,
\end{align}
for all $g\in L^2(\T^1)$ and smooth $\beta\colon[0,T]\rightarrow [0,\infty)$, where $G$ is the (unique up to a set of Lebesgue measure $0$) function on the torus
such that $\int_{\T^1}G\,d\theta =0 $ and $G'= g$.
\end{lemma}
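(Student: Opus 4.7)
The plan is to treat both equivalences uniformly as variational (Brezis--Ekeland-type) characterizations of a gradient flow with an antisymmetric drift: convexity of the energy yields one direction, and a first-order perturbation of the test vector reverses it.

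For the forward direction of the first equivalence I sum the two convexity estimates $\bar{H}(\eta+g)\geq \bar{H}(\eta)+\langle g,\nabla\bar{H}(\eta)\rangle$ and $\bar{H}(\eta-PA^{-1}NJP^t g)\geq \bar{H}(\eta)-\langle PA^{-1}NJP^t g,\nabla\bar{H}(\eta)\rangle$. Assumption (x) (that $A$ and $J$ commute) together with the antisymmetry of $J$ gives $(PA^{-1}NJP^t)^t = -PA^{-1}NJP^t$, so the linear terms on the right add to $\langle g,(\mathrm{Id}+PA^{-1}NJP^t)\nabla\bar{H}(\eta)\rangle = \langle g,\bar{A}^{-1}(\bar{A}+\bar{J})\nabla\bar{H}(\eta)\rangle$. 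Substituting the ODE $\dot\eta = -(\bar{A}+\bar{J})\nabla\bar{H}(\eta)$ converts this to $-\langle g,\bar{A}^{-1}\dot\eta\rangle$; multiplying by $\beta(t)\geq 0$, integrating in $t$ and integrating by parts (with $\beta$ chosen so the boundary contributions drop, which is consistent with the initial condition $\eta(0)=\eta_0$) produces the claimed inequality.

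For the converse I test the inequality with $\epsilon g$ in place of $g$ for $\epsilon>0$ small. Smoothness of $\bar{H}$ gives $\bar{H}(\eta\pm\epsilon h) = \bar{H}(\eta)\pm\epsilon\langle h,\nabla\bar{H}(\eta)\rangle + O(\epsilon^2)$, so subtracting $2\int\bar{H}(\eta)\beta\,dt$ and dividing by $\epsilon$ yields, in the limit $\epsilon\downarrow 0$, the inequality $0\leq \int_0^T\langle g,\bar{A}^{-1}(\bar{A}+\bar{J})\nabla\bar{H}(\eta)\rangle\beta\,dt - \int_0^T\langle g,\bar{A}^{-1}\eta\rangle\dot\beta\,dt$. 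Replacing $g$ by $-g$ forces equality for every admissible pair $(g,\beta)$; a final integration by parts in $t$ combined with the arbitrariness of $g\in Y$ and $\beta\in C_c^\infty((0,T),[0,\infty))$ produces $\dot\eta = -(\bar{A}+\bar{J})\nabla\bar{H}(\eta)$ almost everywhere, while $\beta$ concentrated near $t=0$ recovers $\eta(0)=\eta_0$.

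The second equivalence is then obtained by translating the operators to the continuum. Under the Ginzburg--Landau scaling $A\leftrightarrow -\partial_\theta^2$ and $J\leftrightarrow \partial_\theta$, so $A^{-1}J$ becomes (minus) the mean-zero antiderivative operator and $-PA^{-1}NJP^t g$ corresponds to the unique mean-zero primitive $G$ of $g$ described in the statement; the $H^{-1}$ inner product replaces $\bar{A}^{-1}$ and $\varphi$ replaces $\bar{H}$, so the same convexity-plus-perturbation argument applies verbatim. The main delicate step — and the one where I expect the computation can go astray — is the bookkeeping of adjoints and signs: a mis-sign in $(PA^{-1}NJP^t)^t$ (or in the $J\leftrightarrow \partial_\theta$ correspondence) would make the second perturbation point $\eta-PA^{-1}NJP^t g$ (resp.\ $\zeta-G$) appear with the wrong orientation and break the match with the weak formulation \eqref{eq: weak form}, but the antisymmetry of $J$ together with assumption (x) delivers the correct identification.
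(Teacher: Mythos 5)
Your proof of the first equivalence follows the same route as the paper: convexity of $\bar{H}$ applied at the two perturbation points gives the inequality in one direction, and the $\epsilon$-scaling $g\mapsto\epsilon g$ (together with the $g\mapsto-g$ trick to reverse the inequality) gives the other. The one thing you make explicit that the paper leaves tacit is why $-\langle PA^{-1}NJP^t g,\nabla\bar{H}(\eta)\rangle = \langle g, PA^{-1}NJP^t\nabla\bar{H}(\eta)\rangle$: this is the antisymmetry $(PA^{-1}NJP^t)^t=-PA^{-1}NJP^t$, which does require assumption (x) (the paper jumps straight to the weak form~\eqref{eq: weak form of the macro eqn} as an unproven restatement of the ODE). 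You also note the boundary terms in the time integration by parts must drop, which is implicit in the paper's statement as well. So for the first half your argument is correct and essentially identical to the paper's.

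One caution on your third paragraph, which is a motivational translation rather than a proof. You claim $-PA^{-1}NJP^t g$ corresponds in the limit to the mean-zero primitive $G$, and then say the discrete argument carries over ``verbatim.'' But read literally that identification sends $\eta - PA^{-1}NJP^t g$ to $\zeta + G$, not to $\zeta - G$ as written in~\eqref{eq: variational form of the limiting eqn}; so the naive discrete-to-continuum dictionary does \emph{not} deliver the second variational form with the stated sign, and your concluding remark that ``the antisymmetry of $J$ together with assumption (x) delivers the correct identification'' is overconfident. In fact there does appear to be a sign inconsistency in the paper between the transport term of the limiting PDE, the later convergence lemma $-\overline{PA^{-1}JNP^t g^\ell}\to G$, and the continuum weak form~\eqref{eq: weak form}. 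None of this affects the \emph{lemma} itself, because the second equivalence should be (and, implicitly, is) proved directly in the continuum by re-running the convexity/$\epsilon$-scaling argument against~\eqref{eq: weak form}; the paper simply says it ``follows analogously.'' Just don't claim the translation is literal, since it isn't.
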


\begin{proof}
The proof of this Lemma is modified from that of Lemma 36 in~\cite{GOVW09}.
We show that~\eqref{eq: macro eqn} is equivalent to~\eqref{eq: variational form of the macro eqn}. The equivalence of~\eqref{eq: weak form} and~\eqref{eq: variational form of the limiting eqn} follows analogously. \\
The weak form of~\eqref{eq: macro eqn} is given by
\begin{equation}
\label{eq: weak form of the macro eqn}
\int_0^T\langle g,(\bar{A})^{-1}\eta\rangle_Y\dot{\beta}(t)dt=\int_0^T\left[\langle g,\nabla_Y\bar{H}(\eta)\rangle_Y-\langle PA^{-1}NJP^t g,\nabla_Y\bar{H}(\eta)\rangle_Y\right]\beta(t)\,dt,
\end{equation}
for all $g\in Y$ and smooth $\beta\colon [0,T]\rightarrow [0,\infty)$. We now show that~\eqref{eq: weak form of the macro eqn} implies~\eqref{eq: variational form of the macro eqn}. Since $\bar{H}$ is convex, we have
\begin{align}
\label{eq: usage convexity of mac H}
\langle g-PA^{-1}NJP^t g,\nabla_Y\bar{H}(\eta)\rangle_Y&\leq (\bar{H}(\eta+g)-\bar{H}(\eta))+(\bar{H}(\eta- PA^{-1}NJP^t g)-\bar{H}(\eta))\notag
\\&=-2\bar{H}(\eta)+\bar{H}(\eta+g)+\bar{H}(\eta- PA^{-1}NJP^t g).
\end{align}
Substituting~\eqref{eq: usage convexity of mac H} into~\eqref{eq: weak form of the macro eqn}, we obtain~\eqref{eq: variational form of the macro eqn}
\begin{align}
&\int_0^T\langle g,(\bar{A})^{-1}\eta\rangle_Y\dot{\beta}(t)dt\notag
\\&\leq -2\int_0^T\bar{H}(\eta)\beta(t)\,dt+\int_0^T\left[\bar{H}(\eta+g)+\bar{H}(\eta-PA^{-1}NJP^t g)\right]\beta(t)dt.
\end{align}
Next we show~\eqref{eq: variational form of the macro eqn} implies~\eqref{eq: weak form of the macro eqn}. Take $\tilde{g}=\varepsilon g$ in~\eqref{eq: variational form of the macro eqn}, for some $\varepsilon>0$ and $g\in Y$, we get
\[
\int_0^T\langle g,(\bar{A})^{-1}\eta\rangle_Y\dot{\beta}(t)dt\leq\int_0^T\left[\frac{\bar{H}(\eta+\varepsilon g)-\bar{H}(\eta)}{\varepsilon}+\frac{\bar{H}(\eta-\varepsilon PA^{-1}NJP^t g)-\bar{H}(\eta)}{\varepsilon}\right]\beta(t)\, dt.
\]
By passing to the limit~$\varepsilon\rightarrow 0$, we get
\[
\int_0^T\langle g,(\bar{A})^{-1}\eta\rangle_Y\dot{\beta}(t)dt\leq\int_0^T\langle g-PA^{-1}NJP^t g,\nabla_Y\bar{H}(\eta)\rangle_Y\beta(t)\,dt.
\]
Similarly now by taking $\tilde{g}=-\varepsilon g$, we obtain the opposite inequality
\[
\int_0^T\langle g,(\bar{A})^{-1}\eta\rangle_Y\dot{\beta}(t)dt\geq\int_0^T\langle g-PA^{-1}NJP^t g,\nabla_Y\bar{H}(\eta)\rangle_Y\beta(t)\,dt.
\]
Thus~\eqref{eq: weak form of the macro eqn} is proven.
\end{proof}

\begin{lemma}
\label{lem: compactness}
Let $\{\eta^{\ell}\}_{l=1}^\infty$ be a sequence of solutions of~\eqref{eq: macro eqn} with initial data $\eta^{\ell}_0$ satisfying $\|\bar{\eta}_0^{\ell}\|_{L^2}\leq C$. There exists a constant $C$ independent of $l$ such that
\begin{align} 
& \int_0^T \left\langle\frac{d\eta^{\ell}}{dt}(t), (\bar{A})^{-1}\frac{d\eta^{\ell}}{dt}(t)\right\rangle\,dt\leq C,\label{eq: integrate H-1 norm} \\
& \sup_{t\in [0,T]}\langle \eta^{\ell}(t),\eta^{\ell}(t)\rangle_Y\leq C\label{eq: eta norm}.
\end{align} 
As a consequence, there is a subsequence of the sequence of the associated step functions $\bar{\eta}^{\ell}$ and a function $\eta_*$ such that
\[
\bar{\eta}^{\ell}\rightharpoonup \eta_*\quad\text{weak-* in}\quad L^\infty(L^2)=(L^1(L^2))^*. 
\] 
\end{lemma}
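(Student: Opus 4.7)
The plan is to establish the uniform bound \eqref{eq: eta norm} first, use it together with the equation and estimate \eqref{eq: ineq2} to derive \eqref{eq: integrate H-1 norm}, and finally extract a weak-$*$ convergent subsequence by Banach--Alaoglu. Both a priori estimates reuse computations and inequalities already assembled earlier in the paper, so the proof should be short.

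For \eqref{eq: eta norm}, I would revisit the Gronwall computation already performed in the proof of \eqref{eq: bound of macro Fisher term} in Lemma \ref{lem : bounds entropy production}. Differentiating $\bar H(\eta^\ell)$ along \eqref{eq: macro eqn}, using the commutation assumption (x) to handle the cross term, and applying \eqref{eq: ineq2} to the antisymmetric piece $\bar J$ yields
$$\frac{d}{dt}\bar H(\eta^\ell(t)) + \frac{1}{2}\langle \bar A\nabla_Y \bar H(\eta^\ell), \nabla_Y \bar H(\eta^\ell)\rangle \leq \frac{c}{2}|\nabla_Y \bar H(\eta^\ell)|^2 \leq C\bigl(\bar H(\eta^\ell(t))+1\bigr),$$
where the final step uses $\mathrm{Hess}\,\bar H \leq \Lambda I$ from assumption (iii). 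Gronwall then gives $\sup_{t\in[0,T]} \bar H(\eta^\ell(t)) \leq e^{CT}(\bar H(\eta_0^\ell)+1)$. The initial value $\bar H(\eta_0^\ell)$ is uniformly bounded in $\ell$: the hypothesis $\|\bar\eta_0^\ell\|_{L^2}\leq C$ is $|\eta_0^\ell|_Y\leq C$, and $\mathrm{Hess}\,\bar H \leq \Lambda I$ together with the uniform control of $\bar H(0)$ and $\nabla_Y \bar H(0)$ (which in the concrete application follow from the $C^2$ convergence $\psi_K \to \varphi$) gives the quadratic upper bound. Conversely, $\mathrm{Hess}\,\bar H \geq \lambda I$ combined with (v) yields $\bar H(\eta)\geq \frac{\lambda}{2}|\eta|_Y^2 - C'$, which translates the uniform $\bar H$-bound into the claimed uniform $L^2$-bound \eqref{eq: eta norm}.

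For \eqref{eq: integrate H-1 norm}, I substitute $\frac{d\eta^\ell}{dt} = -(\bar A + \bar J)\nabla_Y \bar H(\eta^\ell)$ and split
$$\left\langle \frac{d\eta^\ell}{dt}, \bar A^{-1}\frac{d\eta^\ell}{dt}\right\rangle \leq 2\langle \bar A\nabla_Y \bar H, \nabla_Y \bar H\rangle + 2\langle \bar J\nabla_Y \bar H, \bar A^{-1}\bar J\nabla_Y \bar H\rangle.$$
Integrating the first term in $t$ is exactly \eqref{eq: bound of macro Fisher term}, whose right-hand side is bounded uniformly in $\ell$ thanks to the bound on $\bar H(\eta_0^\ell)$ just established. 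For the second term I recognize $\bar A^{-1}\bar J = PJA^{-1}NP^t$ and apply \eqref{eq: ineq2} with $y = \nabla_Y \bar H(\eta^\ell)$ to get $\langle \bar J\nabla_Y \bar H, \bar A^{-1}\bar J\nabla_Y \bar H\rangle \leq c|\nabla_Y \bar H(\eta^\ell)|^2 \leq c\Lambda^2 |\eta^\ell|_Y^2 + C$, which is uniformly bounded in $t$ by \eqref{eq: eta norm} and hence integrable on $[0,T]$.

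For the compactness conclusion, the $Y$-inner product coincides with the $L^2(\T)$ inner product of the associated step functions, so \eqref{eq: eta norm} says that $\{\bar\eta^\ell\}$ is uniformly bounded in $L^\infty([0,T]; L^2(\T))$. Since $L^2(\T)$ is separable and reflexive, this space is the dual of the separable Banach space $L^1([0,T]; L^2(\T))$, and the sequential Banach--Alaoglu theorem extracts the desired weak-$*$ convergent subsequence. The only non-routine ingredient in the argument is the control of the antisymmetric contribution $\bar J\nabla_Y \bar H$ in both the Gronwall step and the expansion of $\langle d\eta^\ell/dt, \bar A^{-1} d\eta^\ell/dt\rangle$; each use is a direct appeal to \eqref{eq: ineq2}, which is precisely the estimate that assumptions (ix) and (x) were designed to provide, so no substantive obstacle remains.
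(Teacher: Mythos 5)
Your proof follows the same route as the paper's: bound $\bar H(\eta^\ell(t))$ via the Gronwall computation from Lemma~\ref{lem : bounds entropy production}, use uniform convexity of $\bar H$ to convert this into the $L^2$-bound~\eqref{eq: eta norm}, expand $\langle\dot\eta^\ell,\bar A^{-1}\dot\eta^\ell\rangle$ into the symmetric and antisymmetric pieces and control the latter with~\eqref{eq: ineq2} to get~\eqref{eq: integrate H-1 norm}, and finish with Banach--Alaoglu. Your small additional remark on why $\bar H(\eta_0^\ell)$ is uniformly bounded in $\ell$ is correct and makes explicit a point the paper leaves implicit, but otherwise the two arguments are essentially identical.
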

\begin{proof}
According to proof of~\eqref{eq: bound of macro Fisher term}, we have 
\begin{equation}
\label{eq: uper bound of H}
\bar{H}(\eta^{\ell}(t))\leq e^{C(T+1)} \bar{H}(\eta^{\ell}_0)\quad\text{for all}~~ t\in[0,T]. 
\end{equation}

Since $\bar{H}$ is uniformly convex, we obtain
\[
\langle \eta^{\ell}(t),\eta^{\ell}(t)\rangle_Y\leq C(\bar{H}(\eta^{\ell}(t))+1)\leq Ce^{CT} (\bar{H}(\eta^{\ell}_0) + 1)\leq C,
\]
which is~\eqref{eq: eta norm}. Now we establish~\eqref{eq: integrate H-1 norm}.  From~\eqref{eq: macro eqn}, we have
\begin{align*}
&\langle\dot{\eta}^{\ell}(t), (\bar{A})^{-1}\dot{\eta}^{\ell}(t)\rangle=\langle\bar{A}(I+PJA^{-1}NP^t)\nabla_Y\bar{H}(\eta^{\ell}(t)),(I+PJA^{-1}NP^t)\nabla_Y\bar{H}(\eta^{\ell}(t))\rangle\\
&\qquad\leq 2(\langle\bar{A}\nabla_Y\bar{H}(\eta^{\ell}(t)),\nabla_Y\bar{H}(\eta^{\ell}(t))\rangle+\langle\bar{A}PJA^{-1}NP^t\nabla_Y\bar{H}(\eta^{\ell}(t)),PJA^{-1}NP^t\nabla_Y\bar{H}(\eta^{\ell}(t))\rangle)
\\&\qquad\overset{\eqref{eq: ineq2}}{\leq}2(\langle\bar{A}\nabla_Y\bar{H}(\eta^{\ell}(t)),\nabla_Y\bar{H}(\eta^{\ell}(t))\rangle+c|\nabla_Y\bar{H}(\eta^{\ell}(t))|^2)
\\&\qquad\overset{(iii)}{\leq} 2(\langle\bar{A}\nabla_Y\bar{H}(\eta^{\ell}(t)),\nabla_Y\bar{H}(\eta^{\ell}(t))\rangle+C(\bar{H}(\eta^{\ell}(t))+1))
\end{align*}
Therefore,
\begin{align*}
\int_0^T\langle\dot{\eta}^{\ell}(t), (\bar{A})^{-1}\dot{\eta}^{\ell}(t)\rangle\,dt&\leq 2\int_0^T(\langle\bar{A}\nabla_Y\bar{H}(\eta^{\ell}(t)),\nabla_Y\bar{H}(\eta^{\ell}(t))\rangle+C(\bar{H}(\eta^{\ell}(t))+1))\,dt
\\&\overset{\eqref{eq: bound of macro Fisher term},\eqref{eq: uper bound of H}}{\leq}C,
\end{align*}
which is~\eqref{eq: integrate H-1 norm}.
\end{proof}

\begin{lemma}
\label{lem: estimates limit}
Let $\{\eta^{\ell}\}_1^\infty$ be a sequence of solutions of~\eqref{eq: macro eqn} satisfying Lemma~\ref{lem: compactness}. We take any subsequence such that the associated step functions weak-* converge in $(L^1(L^2))^*$ to a limit $\eta_*$. Then on any bounded time interval, we have
\begin{equation}
\label{eq: regularity of the limt}
\eta_*\in L^\infty(L^2),\quad\frac{\partial \eta_*}{\partial t}\in L^2(H^{-1}),\quad \varphi'(\eta_*)\in L^2(L^2).
\end{equation}
\begin{proof}
Having the estimate in Lemma~\ref{lem: compactness}, the proof of this Lemma is the same as that of Lemma 35 in~\cite{GOVW09}; hence we omit it here.
\end{proof}
\end{lemma}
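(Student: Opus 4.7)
The plan is to establish each of the three regularity properties for $\eta_*$ by combining the uniform bounds from Lemma~\ref{lem: compactness} with the comparison between the discrete energy norms and the continuous $L^2$, $H^{-1}$, $H^1$ norms on step functions provided by Lemma~\ref{lem: norms}, then passing to the limit in a suitable topology.

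The $L^\infty(L^2)$ membership is immediate: the sequence $\bar\eta^\ell$ is uniformly bounded in $L^\infty(L^2)$ by~\eqref{eq: eta norm} and converges weak-$\ast$ to $\eta_*$, so weak-$\ast$ lower semicontinuity of the norm on $L^\infty(L^2)=(L^1(L^2))^*$ yields the bound. For the time derivative, the estimate~\eqref{eq: integrate H-1 norm} combined with Lemma~\ref{lem: norms} (which gives that $\langle \bar A^{-1} v, v \rangle$ is equivalent to $\|\bar v\|_{H^{-1}}^2$ up to a controlled error of order $N^{-1}$) shows that $\partial_t \bar\eta^\ell$ is uniformly bounded in $L^2(0,T;H^{-1}(\T))$. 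Extracting a weakly convergent subsequence produces a limit that coincides with $\partial_t \eta_*$ distributionally, so $\partial_t \eta_* \in L^2(H^{-1})$.

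The substantive step is $\varphi'(\eta_*) \in L^2(L^2)$. From the Fisher-information bound~\eqref{eq: bound of macro Fisher term}, using that $\langle \bar A g, g \rangle_Y$ is comparable to $\|\partial_\theta \bar g\|_{L^2}^2$ together with the coordinate expression $\nabla_Y \bar H(y)_i = \psi_K'(y_i)$, one obtains that $\overline{\psi_K'(\eta^\ell)}$ is bounded in $L^2(0,T; H^1(\T))$. The $C^2$ convergence $\psi_K \to \varphi$ and the at-most-linear growth of $\varphi'$ (inherited from the quadratic-plus-bounded-perturbation structure~\eqref{assumption_potential} of $\psi$) then give $\overline{\varphi'(\eta^\ell)}$ bounded in $L^2(H^1)$, so along a subsequence it converges weakly to some $v \in L^2(L^2)$.

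The main obstacle is to identify $v$ with $\varphi'(\eta_*)$, since $\varphi'$ is nonlinear and weak convergence does not commute with it. For this I would invoke an Aubin--Lions-type compactness argument based on the $L^\infty(L^2)$ bound on $\bar\eta^\ell$ and the $L^2(H^{-1})$ bound on $\partial_t \bar\eta^\ell$: the compact embedding of $L^2$ into $H^{-s}$ for small $s>0$ upgrades the weak-$\ast$ convergence to strong convergence of $\bar\eta^\ell$ in $L^2(0,T; H^{-s})$, and interpolation with the $L^2(H^1)$ bound on $\overline{\varphi'(\eta^\ell)}$ combined with the strict convexity of $\varphi$ upgrades this to a.e.\ convergence of $\bar\eta^\ell$ along a further subsequence; continuity of $\varphi'$ then gives $v=\varphi'(\eta_*)$. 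Alternatively, Minty's monotonicity trick applied to $\varphi'$ bypasses the need for strong convergence: the inequality $\int(\varphi'(\bar\eta^\ell)-\varphi'(w))(\bar\eta^\ell-w)\ge 0$ survives the limit for any smooth test $w$, and a standard perturbation identifies $v$. Since the antisymmetric term $J$ enters the proof only through the a priori bounds already encoded in Lemma~\ref{lem: compactness}, the argument is structurally identical to that of Lemma~35 in~\cite{GOVW09}, which is why the authors omit it.
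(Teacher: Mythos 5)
The $L^\infty(L^2)$ and $L^2_t(H^{-1})$ parts of your sketch are sound and are indeed the substance of the argument: the uniform bound~\eqref{eq: eta norm} and weak-$*$ lower semicontinuity give the first, while~\eqref{eq: integrate H-1 norm} together with the comparability $\langle \bar A^{-1}v,v\rangle_Y \sim \|\bar v\|_{H^{-1}}^2$ (a $Y$-level consequence of Lemma~\ref{lem: norms}, since $\langle \bar A^{-1}v,v\rangle_Y = \frac{1}{N}\langle A^{-1}NP^tv, NP^tv\rangle$) gives the second after extracting a further weakly convergent subsequence.

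The third part, however, is over-engineered. To conclude $\varphi'(\eta_*)\in L^2(L^2)$ there is no need to produce the weak limit $v$ of $\overline{\varphi'(\eta^\ell)}$ or to identify it with $\varphi'(\eta_*)$, and hence no need for Aubin--Lions compactness or Minty's trick. Assumption~(iii) (via the $C^2$ convergence $\psi_K\to\varphi$) gives $\varphi''\le\Lambda$, so $\varphi'$ is globally Lipschitz with at most linear growth; then $\|\varphi'(\eta_*(t,\cdot))\|_{L^2(\T)}\le C(1+\|\eta_*(t,\cdot)\|_{L^2(\T)})$ for a.e.\ $t$, and $\varphi'(\eta_*)\in L^\infty(L^2)\subset L^2(L^2)$ on any bounded time interval follows directly from the already-established $\eta_*\in L^\infty(L^2)$. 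This is what makes the lemma a pure regularity statement, provable by soft means once Lemma~\ref{lem: compactness} is in hand. The weak-limit identification machinery you describe (strong $L^\infty(H^{-1})$ convergence from Aubin--Lions, and passing nonlinear terms through the limit) is genuinely needed later in the paper, in Lemma~\ref{lem: convergence weak form}, where one must pass to the limit in the variational characterization~\eqref{eq: variational form of the macro eqn}; you have in effect sketched part of that proof under this lemma's heading. Your closing remark — that $J$ enters only through the a priori bounds of Lemma~\ref{lem: compactness}, so the argument is unchanged from \cite{GOVW09} — is exactly the point the authors are making by omitting the proof.
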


\begin{lemma}
If $\overline{g^{\ell}}\rightarrow \bar{g}$ strongly in $H^{-1}(\T)$  and $\sup_{\ell}\Vert \overline{g^\ell}\Vert_{L^2}<\infty$ then $-\overline{PA^{-1}JNP^tg^{\ell}}\rightarrow G$ strongly in $L^2(\T)$ where $G$ is the primitive of $\bar{g}$.
\end{lemma}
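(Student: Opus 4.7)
The idea is to identify $-PA^{-1}JNP^{t}$ as a discretized primitive operator on $\T$, and then reduce the problem to the convergence of the underlying discrete Poisson solution together with a finite-difference consistency step. The key analytic identity in the background is $\|f\|_{H^{-1}}=\|\partial_\theta^{-1}f\|_{L^2}$ for mean-zero $f$, so that the $H^{-1}$ convergence $\overline{g^\ell}\to\bar g$ is the same as $L^{2}$ convergence of the mean-zero primitives $G^\ell\to G$.

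Since $A$ and $J$ commute (assumption (x), which also follows by direct circulant calculation in the application), write
\[
-PA^{-1}JNP^{t}g^{\ell}=-PJA^{-1}NP^{t}g^{\ell}=P\tilde h^{\ell},\qquad \tilde h^{\ell}:=-Ju^{\ell},\quad u^{\ell}:=A^{-1}NP^{t}g^{\ell}\in X_{\ell}.
\]
Thus $u^{\ell}$ solves the discrete Poisson problem $Au^{\ell}=NP^{t}g^{\ell}$, whose right-hand side has step-function $\overline{NP^{t}g^{\ell}}=\overline{g^{\ell}}$. By Lemma \ref{lem: norms}, $\frac{1}{N}\langle u^{\ell},Au^{\ell}\rangle$ is comparable to the $\dot H^{1}$-norm squared of the piecewise-linear interpolant $\bar u^{\ell}_{\mathrm{lin}}$, and by the same lemma this quantity is bounded by $C\|\overline{g^{\ell}}\|_{H^{-1}}^{2}$. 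Hence $\bar u^{\ell}_{\mathrm{lin}}$ is uniformly bounded in $H^{1}(\T)$. Rellich--Kondrachov plus uniqueness of the limit of the weak formulation of $Au^{\ell}=\overline{g^{\ell}}$ shows that $\bar u^{\ell}_{\mathrm{lin}}\to u_{\ast}$ strongly in $L^{2}(\T)$ (and weakly in $H^{1}$), where $u_{\ast}$ is the mean-zero solution of $-u_{\ast}''=\bar g$.

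The next step is to pass from $u^{\ell}$ to $-Ju^{\ell}$. The vector $-Ju^{\ell}$ is the standard central-difference approximation of $-u_{\ast}'$. Using the uniform $L^{2}$-bound on $\overline{g^{\ell}}$ (which upgrades the regularity of $u_{\ast}$ to $H^{2}$) and the $L^{2}$-stability estimate $\|\overline{-Ju^{\ell}}\|_{L^{2}}^{2}\le\|\bar u^{\ell}_{\mathrm{lin}}\|_{\dot H^{1}}^{2}$, a standard consistency argument yields $\overline{-Ju^{\ell}}\to-u_{\ast}'=\partial_{\theta}^{-1}\bar g=G$ strongly in $L^{2}(\T)$. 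Finally, the operator $P$ only replaces $\tilde h^{\ell}$ by its block averages over boxes of size $K=N/M$; since $\overline{-Ju^{\ell}}$ is strongly convergent (hence precompact) in $L^{2}$, a Jensen-type estimate gives $\|\overline{P\tilde h^{\ell}}-\overline{\tilde h^{\ell}}\|_{L^{2}}\to0$ as $K\to\infty$. Combining the two limits via the triangle inequality yields $-\overline{PA^{-1}JNP^{t}g^{\ell}}=\overline{P\tilde h^{\ell}}\to G$ strongly in $L^{2}(\T)$.

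The main obstacle I expect is the consistency step for the discrete derivative $-J$. The heuristic ``central differences of an $H^{2}$-function converge in $L^{2}$'' is standard, but one must carefully handle the fact that $\bar u^{\ell}_{\mathrm{lin}}$ converges only strongly in $L^{2}$ and weakly in $H^{1}$, while $\bar g^{\ell}$ itself may fail to converge strongly in $L^{2}$. The $L^{2}$-boundedness hypothesis on $\overline{g^{\ell}}$, absent in a pure $H^{-1}$-convergence setting, is exactly what provides the extra regularity $u_{\ast}\in H^{2}$ that makes the central-difference consistency argument work and controls the aliasing errors introduced by the two-scale averaging $NP^{t}$ and $P$.
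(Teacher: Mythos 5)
Your approach is genuinely different from the paper's and, while it is headed in a plausible direction, it has a real gap at the step you yourself flag as the main obstacle. The paper avoids all soft functional analysis: it factors $A = DD^T$, $J = \tfrac12(D^T - D)$ with $D$ the forward-difference matrix, computes $JA^{-1} = \tfrac12\bigl(D^{-1} - (D^T)^{-1}\bigr)$ explicitly as a (scaled) signed Heaviside-type matrix, and then works out $PA^{-1}JNP^t\xi$ in closed form. The upshot is the exact identity
\[
-\overline{PA^{-1}JNP^t\xi} \;=\; \Upsilon_{\xi} \;-\; \frac{1}{2M}\,\overline{\xi},
\]
where $\Upsilon_{\xi}$ is the discrete running-sum (the $H^{-1}$-primitive) of $\overline{\xi}$. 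From there the conclusion is immediate: $\Upsilon_{g^\ell}\to G$ in $L^2$ is literally the definition of $H^{-1}$-convergence of $\overline{g^\ell}$, and the correction term $\tfrac{1}{2M}\overline{g^\ell}$ vanishes precisely because of the uniform $L^2$ bound. No elliptic regularity, no Rellich--Kondrachov, no finite-difference consistency estimate is needed; the lemma's hypotheses map one-to-one onto the two terms in the identity.

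The gap in your route is in the passage from convergence of $u^\ell=A^{-1}NP^tg^\ell$ to $L^2$-convergence of the central difference $-Ju^\ell$. You establish $\bar u^\ell_{\mathrm{lin}}\to u_*$ strongly in $L^2$ and weakly in $H^1$, but a weakly $H^1$-convergent sequence does not have $L^2$-convergent discrete derivatives, and this is not repaired by knowing the \emph{limit} $u_*$ lies in $H^2$: the regularity of $u_*$ controls the consistency error of central differences applied to $u_*$ itself, not the error incurred by replacing $u_*$ with $u^\ell$. What you actually need, and do not establish, is \emph{strong} $H^1$-convergence of $\bar u^\ell_{\mathrm{lin}}$. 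This can be obtained, but by a different mechanism than the one you invoke: use the quantitative second part of Lemma \ref{lem: norms} to show that the discrete Dirichlet energies converge,
\[
\|\bar u^\ell_{\mathrm{lin}}\|_{\dot H^1}^2 \;=\; \tfrac{1}{N}\langle A^{-1}NP^tg^\ell,\,NP^tg^\ell\rangle \;\longrightarrow\; \|\bar g\|_{H^{-1}}^2 \;=\; \|u_*'\|_{L^2}^2,
\]
which together with weak $H^1$-convergence upgrades to strong $H^1$-convergence (this is exactly where the $L^2$ bound on $\overline{g^\ell}$ enters, via the hypothesis of the second part of Lemma \ref{lem: norms}, rather than through $H^2$ regularity of $u_*$). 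From strong $H^1$-convergence of the linear interpolants, convergence of the central differences follows by a short translation argument, and the block-averaging step you outline for $P$ is then fine. So the route can be closed, but as written it skips the decisive step; and the paper's explicit computation, which you should compare against, gets the same conclusion in a few lines of linear algebra.
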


$\overline{PA^{-1}JNP^tg^{\ell}}$ is the step function associated to $PA^{-1}JNP^tg^{\ell}$, as in (\ref{def: bar x}). We only formally gave the definition for elements of $X_{N,0}$, while $PA^{-1}JNP^tg^{\ell}$ is in $Y_{M,0}$, but since $Y_{M,0} \sim X_{M,0}$, this is not an issue, and we just use the same definition, with mesh size $M^{-1}$ instead of $N^{-1}$.

\begin{proof}
Set
\begin{equation}
D=N\begin{pmatrix}
1&-1&&&\\
&1&-1&&\\
&&\ddots&\ddots&\\
&&&1&-1\\
-1&&&&1
\end{pmatrix},
\end{equation}
then we can write
\[
A=DD^T,\quad J=\frac{1}{2}(D^T-D).
\]
Hence
\begin{equation}
\label{eq: JA^-1}
JA^{-1}=\frac{1}{2}(D^T-D)(D^TD)^{-1}=\frac{1}{2}(D^T-D)D^{-1}(D^T)^{-1}=\frac{1}{2}(D^{-1}-(D^T)^{-1}).
\end{equation}
The inverse of $D$ and $D^T$ can be computed explicitly
\[
D^{-1}=\frac{1}{2N}
\begin{pmatrix}
 1\\
    & 1 & & \text{\huge1}\\
    & & 1\\
    & \text{\huge{-1}} & & 1\\
    & & & & 1
\end{pmatrix},\qquad (D^T)^{-1}=(D^{-1})^T.
\]
So we obtain
\begin{equation}
\label{eq: D^-1 -D^T^-1}
D^{-1}-(D^T)^{-1}=\frac{1}{N}
\begin{pmatrix}
 0\\
    & 0 & & \text{\huge1}\\
    & & 0\\
    & \text{\huge-1} & & 0\\
    & & & & 0
\end{pmatrix},
\end{equation}
Let 
$
\xi=\begin{pmatrix}
\xi_1\\
\vdots\\
\xi_M
\end{pmatrix}\in Y=\R^M$ be given. We now compute $\overline{PA^{-1}JNP^t\xi}$ explicitly. 

By definition of $P^t$, we have
\begin{equation}
\label{eq: NP^txi}
NP^t\xi=\begin{pmatrix}
\xi_1\\
\vdots\\
\xi_1\\
\xi_2\\
\vdots\\
\xi_2\\
\vdots\\
\xi_M\\
\vdots\\
\xi_M
\end{pmatrix}\in \R^N=\R^{KM}.
\end{equation}
From~\eqref{eq: JA^-1},~\eqref{eq: D^-1 -D^T^-1} and~\eqref{eq: NP^txi}, we have
\begin{align*}
A^{-1}JNP^t\xi=\frac{1}{2N}\begin{pmatrix}
K(\xi_1+\cdots+\xi_M)\\
(K-1)\xi_1+K(\xi_2+\cdots+\xi_M)\\
\vdots\\
K(\xi_2+\cdots+\xi_M)\\
(K-1)\xi_2+K(\xi_3+\cdots+\xi_M)\\
\vdots\\
K(\xi_3+\cdots+\xi_M)\\
\vdots\\
\xi_M
\end{pmatrix}-\frac{1}{2N}\begin{pmatrix}
\xi_1\\
2\xi_1\\
\vdots\\
K\xi_1\\
K \xi_1+\xi_2\\
\vdots\\
K(\xi_1+\cdots+\xi_M)
\end{pmatrix}.
\end{align*}
Therefore, by definition of $P$,
\begin{align*}
PA^{-1}JNP^t\xi&=\frac{1}{2M}\begin{pmatrix}
\xi_1+\xi_2+\cdots+\xi_M\\
\xi_2+\cdots+\xi_M\\
\vdots\\
\xi_M
\end{pmatrix}-\frac{1}{2M}\begin{pmatrix}
\xi_1\\
\xi_1+\xi_2\\
\vdots\\
\xi_1+\xi_2+\cdots+\xi_M\\
\end{pmatrix}.
\end{align*}

Recall that the $\xi_i$ sum up to $0$, and hence we can re-write the above equality as
\begin{align*}
-PA^{-1}JNP^t\xi&=\frac{1}{2M}\begin{pmatrix}
\xi_1\\
\xi_1+\xi_2\\
\vdots\\
\xi_1+\xi_2+\cdots+\xi_M\\
\end{pmatrix}+\frac{1}{2M}\begin{pmatrix}
0\\
\xi_1\\
\vdots\\
\xi_1+\xi_2+\cdots+\xi_{M-1}
\end{pmatrix}
\\&=\frac{1}{M}\begin{pmatrix}
\xi_1\\
\xi_1+\xi_2\\
\vdots\\
\xi_1+\xi_2+\cdots+\xi_M\\
\end{pmatrix}-\frac{1}{2M}\begin{pmatrix}
\xi_1\\
\xi_2\\
\vdots\\
\xi_M\\
\end{pmatrix}.
\end{align*}
It follows that
\begin{equation*}
-\overline{PA^{-1}JNP^t\xi}=\Upsilon_\xi-\frac{1}{2M}\overline{\xi},
\end{equation*}
where $\Upsilon_\xi$ denotes the primitive of $\overline{\xi}$. By the assumption $\overline{g^{\ell}}\rightarrow \overline{g}$ strongly in $H^{-1}$ and by definition of $H^{-1}$-norm, we have
\begin{equation*}
\Upsilon_{g^{\ell}}=-\overline{PA^{-1}JNP^t g^{\ell}}+\frac{1}{2M}\overline{ g^{\ell}}\rightarrow \Upsilon_{g}\equiv G~~\text{strongly in}~ L^2(\T).
\end{equation*}
The assertion then follows because
\begin{equation*}
\Vert -\overline{PA^{-1}JNP^t g^{\ell}} -G\Vert_{L^2}\leq \Vert -\overline{PA^{-1}JNP^t g^{\ell}}+\frac{1}{2M}\overline{ g^{\ell}}-G\Vert_{L^2} +\frac{1}{2M}\|\overline{ g^{\ell}}\|_{L^2}\rightarrow 0.
\end{equation*}

\end{proof}

\begin{lemma}
\label{lem: convergence weak form}
Suppose that the sequence $\eta^{\ell}$ satisfies (\ref{eq: integrate H-1 norm}), (\ref{eq: eta norm}) and (\ref{eq: variational form of the macro eqn}), and consider a subsequence such that 
\[
\bar{\eta}^{\ell}\rightharpoonup \eta_*\quad\text{weak-* in}\quad L^\infty(L^2)=(L^1(L^2))^*. 
\] 
holds. Let $\xi^{\ell} = \pi_{\ell}(\xi + \eta^{\ell}) - \eta^{\ell}$, where $\xi$ is an arbitrary $L^2$ function and $\pi_{\ell}$ is the $L^2$-projection onto elements of $Y$. Let $\Xi$ be the primitive with average $0$ of $\xi$. Then we have

(i) $$\underset{\ell}{\liminf} \hspace{1mm} \int_0^T{\bar{H}(\eta^{\ell}(t))\beta(t)dt} \geq \int_0^T{\int_{\T}{\varphi(\eta_*(t,\theta))\beta(t)d\theta} dt};$$

(ii) $$\underset{\ell}{\lim} \hspace{1mm} \int_0^T{\bar{H}(\eta^{\ell}(t) + \xi^{\ell}(t))\beta(t)dt} = \int_0^T{\int_{\T}{\varphi(\eta_*(t,\theta) + \xi(\theta))\beta(t)d\theta}dt};$$

(iii) $$ \underset{\ell}{\lim} \hspace{1mm} \int_0^T{\bar{H}(\eta^{\ell}(t) - PA^{-1}JNP^t\xi^{\ell}(t))\beta(t)dt} = \int_0^T{\int_{\T}{\varphi(\eta_*(t,\theta) - \Xi(\theta))\beta(t)d\theta}dt};$$

(iv) $$\underset{\ell}{\lim} \hspace{1mm}  \int_0^T{\langle \xi^{\ell}(t), \bar{A}^{-1}\eta^{\ell}(t) \rangle_Y \dot{\beta}(t)dt} = \int_0^T{\langle \xi(\theta), \eta_*(t, \theta)  \rangle_{H^{-1}} \dot{\beta}(t)dt}. $$

\end{lemma}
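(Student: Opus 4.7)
The plan is to combine three inputs: (a) strong subsequential $L^2([0,T];L^2(\T))$ convergence of the step functions $\bar\eta^{\ell}$, (b) strong $L^2(\T)$ convergence $\bar\xi^{\ell}\to\xi$, and (c) the uniform approximation $\|\psi_K-\varphi\|_{C^2}\to 0$, which lets us freely replace $\psi_K$ by $\varphi$ inside the integrals at the cost of an error of order $\|\psi_K-\varphi\|_{C^2}\bigl(1+\text{uniform }L^2\text{ bounds}\bigr)$.

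For (a), the bounds in Lemma \ref{lem: compactness} only give weak-$*$ compactness in $L^\infty(L^2)$, which is insufficient to pass to the limit inside the nonlinear function $\psi_K$. The key additional input is the entropy dissipation bound \eqref{eq: bound of macro Fisher term} applied to $\eta^{\ell}$, which yields a uniform bound on $\int_0^T\langle\bar A\nabla_Y\bar H(\eta^{\ell}),\nabla_Y\bar H(\eta^{\ell})\rangle\,dt$. In the discrete setup this is the analogue of $\int_0^T\int_{\T}|\partial_\theta\varphi'(\bar\eta^{\ell})|^2\,d\theta\,dt$, so together with the lower bound $\varphi''\geq\lambda$ it translates to a uniform discrete $L^2([0,T];H^1(\T))$ bound on $\bar\eta^{\ell}$. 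Combining this with the $L^2([0,T];H^{-1}(\T))$ bound on $\partial_t\bar\eta^{\ell}$ coming from \eqref{eq: integrate H-1 norm} and applying the Aubin--Lions lemma, we extract a subsequence along which $\bar\eta^{\ell}\to\eta_*$ strongly in $L^2([0,T];L^2(\T))$. For (b), since $\bar\eta^{\ell}$ is already a step function of mesh $1/M_{\ell}$ and $\pi_{\ell}$ is the $L^2$-projection onto such step functions, we get $\xi^{\ell}=\pi_{\ell}\xi$, whose associated step function converges to $\xi$ strongly in $L^2(\T)$.

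With these ingredients, parts (i)--(iii) follow in a uniform manner. After the $\psi_K\to\varphi$ reduction, each of the three integrands takes the form $\int_0^T\int_{\T}\varphi(\bar\eta^{\ell}+h^{\ell})\beta\,d\theta\,dt$ for some sequence $h^{\ell}$: $h^{\ell}=0$ in (i), $h^{\ell}=\bar\xi^{\ell}$ in (ii), and $h^{\ell}=-\overline{PA^{-1}JNP^t\xi^{\ell}}$ in (iii). In case (i), weak lower semicontinuity of $\int\varphi$ under weak $L^2$ convergence gives the $\liminf$ bound. In case (ii), $h^{\ell}\to\xi$ strongly in $L^2(\T)$ by (b); in case (iii), $h^{\ell}\to\Xi$ strongly in $L^2(\T)$ by the preceding lemma (whose $H^{-1}$ hypothesis on $\bar\xi^{\ell}$ is supplied by the stronger $L^2$ convergence). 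Hence in both (ii) and (iii) the argument of $\varphi$ converges strongly in $L^2([0,T];L^2(\T))$ by combining with (a), and the at-most-quadratic growth of $\varphi$ (inherited from $\psi(x)=\tfrac12 x^2+\delta\psi$) together with uniform $L^2$ bounds allows dominated convergence to yield the claimed equalities.

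For (iv), the refined norm equivalence of Lemma \ref{lem: norms}, polarized to an inner-product statement, gives $\langle\xi^{\ell},\bar A^{-1}\eta^{\ell}\rangle_Y=\langle\bar\xi^{\ell},\bar\eta^{\ell}\rangle_{H^{-1}(\T)}+O(1/N_{\ell})$, uniformly in $t$. Writing the $H^{-1}$ pairing as $\int_{\T}\Xi^{\ell}U^{\ell}(t)\,d\theta$ in terms of the mean-zero primitives of $\bar\xi^{\ell}$ and $\bar\eta^{\ell}(t)$, using that $\Xi^{\ell}\to\Xi$ strongly in $H^1(\T)$ (from (b)) while $U^{\ell}\to U_*$ strongly in $L^2([0,T];L^2(\T))$ (integration of (a) raises regularity by one derivative), the product converges strongly in $L^1([0,T])$, and integrating against $\dot\beta$ yields the result. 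The main obstacle throughout is the compactness in (a): none of the continuity arguments close without upgrading weak-$*$ to strong $L^2([0,T];L^2(\T))$ convergence of $\bar\eta^{\ell}$, which is where the entropy dissipation identity enters essentially; once this is in hand, the remaining steps are routine continuity estimates.
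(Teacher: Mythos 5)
Your overall strategy for (ii) and (iii) is genuinely different from the paper's. The paper follows \cite{GOVW09} and treats the nonlinearity via a convexity/Cauchy--Schwarz argument: weak lower semicontinuity gives the lower bound, and for the upper bound one compares $\bar{H}(\eta^{\ell}-PA^{-1}JNP^t\xi^{\ell})$ to $\bar{H}(g^{\ell})$ for an auxiliary strongly convergent sequence $g^{\ell}$, bounds the difference by
$\sqrt{\langle A_M\nabla\bar{H},\nabla\bar{H}\rangle}\cdot\sqrt{\langle A_M^{-1}(\cdots-g^{\ell}),(\cdots-g^{\ell})\rangle}$,
and then uses the entropy-dissipation bound for the first factor and $L^\infty(H^{-1})$ convergence for the second. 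This only ever needs weak-$*$ $L^\infty(L^2)$ convergence together with strong $L^\infty(H^{-1})$ convergence, and never a strong $L^2(L^2)$ statement. You instead propose to upgrade the compactness to strong $L^2([0,T];L^2)$ via Aubin--Lions and then pass to the limit inside $\varphi$ by a Lipschitz/dominated-convergence estimate. If the upgraded compactness holds, your route is arguably cleaner, treats (i)--(iii) uniformly, and makes the convexity trick unnecessary; the trade-off is that you must justify a compactness statement the paper never needs.

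That compactness step is where there is a genuine gap. The step functions $\bar\eta^{\ell}$ live on a mesh of size $1/M_{\ell}\to 0$ and are not in $H^1(\T)$; the entropy-dissipation bound gives a time-integrated bound on the \emph{discrete} $H^1$ seminorm $\int_0^T\langle A_M\eta^{\ell},\eta^{\ell}\rangle\,dt$, not an $L^2([0,T];H^1(\T))$ bound on $\bar\eta^{\ell}$ itself. Invoking the standard Aubin--Lions lemma with $H^1\hookrightarrow\hookrightarrow L^2\hookrightarrow H^{-1}$ therefore does not apply as written. To close the argument one must either (a) pass to piecewise-linear interpolants $\tilde\eta^{\ell}$, which do satisfy an $L^2(H^1)$ bound, and show $\|\bar\eta^{\ell}-\tilde\eta^{\ell}\|_{L^2(L^2)}\lesssim M^{-1}\bigl(\int_0^T\langle A_M\eta^{\ell},\eta^{\ell}\rangle\,dt\bigr)^{1/2}\to0$, together with a bound on $\partial_t\tilde\eta^{\ell}$; or (b) verify a discrete Fr\'{e}chet--Kolmogorov criterion directly: spatial equicontinuity via $\|\bar\eta^{\ell}(\cdot+r)-\bar\eta^{\ell}\|_{L^2}^2\lesssim(r+M^{-1})^2\langle A_M\eta^{\ell},\eta^{\ell}\rangle$, and temporal equicontinuity by interpolating $\|\eta^{\ell}(t+h)-\eta^{\ell}(t)\|_Y^2\leq\|A_M^{-1/2}(\cdots)\|_Y\|A_M^{1/2}(\cdots)\|_Y$ and using the $L^2(H^{-1})$ bound on $\dot\eta^{\ell}$. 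Either way, this is a nontrivial discrete-to-continuous compactness argument that your write-up treats as routine; as stated, the proof does not close. A secondary remark: for (iv) you assert that ``integration of (a) raises regularity by one derivative,'' giving strong $L^2(L^2)$ convergence of the spatial primitives $U^{\ell}$; this follows cleanly once the strong $L^2(L^2)$ convergence of $\bar\eta^{\ell}$ from the Aubin--Lions step is established, but not from the weak-$*$ hypothesis alone (pointwise-in-time application of a compact spatial operator is not compact on the Bochner space), so (iv) also hinges on filling the same gap.
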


\begin{proof} (i), (ii) and (iv) have already been proven in Lemma 37 of \cite{GOVW09}, so we only have to prove (iii).

Recall that $\bar{H}(y) = M^{-1}\sum \psi_K(y_i) + N^{-1}\log \bar{Z}$, where $\psi_K$ was defined in (\ref{def: psi K}). Since $\bar{\eta}^{\ell}$ converges to $\eta_*$ and $\overline{PA^{-1}JNP^t\xi^{\ell}(t)}$ converges to $\Xi$, by weak lower-semi continuity and the uniform convergence of $\psi_K$ to $\varphi$ (see Proposition 31 in \cite{GOVW09}) we immediately get 
$$ \underset{\ell}{\liminf} \hspace{1mm}  \int_0^T{\bar{H}(\eta^{\ell}(t) - PA^{-1}JNP^t\xi^{\ell}(t))\beta(t)dt} \geq \int_0^T{\int_{\T}{\varphi(\eta_*(t,\theta) - \Xi(\theta))\beta(t)d\theta}dt}$$
so we only need to prove the associated upper bound. Let $g^{\ell}(t)$ be a sequence of elements of $Y$ such that $\bar{g}^{\ell}$ strongly converges in $L^{\infty}(L^2)$ to $\eta_* - \Xi$. Since we then have 
$$\int_0^T{\bar{H}(g^{\ell}(t))\beta(t)dt} \longrightarrow \int_0^T{\int_{\T}{\varphi(\eta_*(t,\theta) - \Xi(\theta))\beta(t)d\theta}dt}$$
we only need to show that 
$$\underset{\ell}{\limsup} \hspace{1mm}  \int_0^T{\bar{H}(\eta^{\ell}(t) - PA^{-1}JNP^t\xi^{\ell}(t))\beta(t)dt} - \int_0^T{\bar{H}(g^{\ell}(t))\beta(t)dt} \leq 0.$$
Let $A_M$ be the discrete Laplacian with scaling factor $M^2$ on $Y$. Since $\psi_K$ is convex, we have

\begin{align}
\bar{H}(\eta^{\ell}(t) &- PA^{-1}JNP^t\xi^{\ell}(t)) - \bar{H}(g^{\ell}(t)) = \frac{1}{M}\underset{i = 1}{\stackrel{M}{\sum}} \hspace{1mm} \psi_K(\eta^{\ell}_i -( PA^{-1}JNP^t\xi^{\ell})_i) - \psi_K(g^{\ell}_i) \notag \\
&\leq  \frac{1}{M}\underset{i = 1}{\stackrel{M}{\sum}} \hspace{1mm} \psi_K'(\eta^{\ell}_i -( PA^{-1}JNP^t\xi^{\ell})_i)(\eta^{\ell}_i -( PA^{-1}JNP^t\xi^{\ell})_i - g^{\ell}_i) \notag \\
&= \langle \nabla \bar{H}(\eta^{\ell} - PA^{-1}JNP^t\xi^{\ell}),( \eta^{\ell} - PA^{-1}JNP^t\xi^{\ell} - g^{\ell}) \rangle_Y \notag \\
&\leq \langle A_M \nabla \bar{H}(\eta^{\ell} - PA^{-1}NP^t\xi^{\ell}), \nabla \bar{H}(\eta^{\ell} - PA^{-1}NP^t\xi^{\ell}) \rangle_Y^{1/2} \notag \\
& \hspace{5mm} \times  \langle A_M^{-1}( \eta^{\ell} - PA^{-1}NP^t\xi^{\ell} - g^{\ell}) , ( \eta^{\ell} - PA^{-1}NP^t\xi^{\ell} - g^{\ell})  \rangle_Y^{1/2} \notag 
\end{align}

Since $\langle A_M^{-1} \cdot, \cdot \rangle_Y$ behaves like the squared $H^{-1}$ norm, the fact that $ \eta^{\ell} - PA^{-1}NP^t\xi^{\ell}$ and $ g^{\ell}$ converge to the same limit in $L^{\infty}(H^{-1})$ implies that 
$$\langle A_M^{-1}( \eta^{\ell} - PA^{-1}NP^t\xi^{\ell} - g^{\ell}) , ( \eta^{\ell} - PA^{-1}NP^t\xi^{\ell} - g^{\ell})  \rangle_Y \longrightarrow 0,$$
and therefore it will be enough to show that 
$$\int_0^T{ \langle A_M \nabla \bar{H}(\eta^{\ell} - PA^{-1}NP^t\xi^{\ell}), \nabla \bar{H}(\eta^{\ell} - PA^{-1}NP^t\xi^{\ell}) \rangle_Y dt} < C.$$

Since under our assumptions $\psi_K'$ is bi-Lipschitz, we have
\begin{align}
& \langle A_M \nabla \bar{H}(\eta^{\ell} - PA^{-1}NP^t\xi^{\ell}), \nabla \bar{H}(\eta^{\ell} - PA^{-1}NP^t\xi^{\ell}) \rangle_Y \notag \\
& \hspace{3mm} = M\underset{i = 1}{\stackrel{M}{\sum}} \hspace{1mm}( \psi_K'(\eta^{\ell}_{i+1} -  ( PA^{-1}NP^t\xi^{\ell})_{i+1}) -  \psi_K'(\eta^{\ell}_{i} -  ( PA^{-1}NP^t\xi^{\ell})_{i}))^2 \notag \\
& \hspace{3mm} \leq CM\underset{i = 1}{\stackrel{M}{\sum}} \hspace{1mm}( \eta^{\ell}_{i+1} -  ( PA^{-1}NP^t\xi^{\ell})_{i+1} -  (\eta^{\ell}_{i} -  ( PA^{-1}NP^t\xi^{\ell})_{i}))^2 \notag \\
& \hspace{3mm} \leq CM\underset{i = 1}{\stackrel{M}{\sum}} \hspace{1mm} ( \eta^{\ell}_{i+1} -  \eta^{\ell}_i)^2 + ( ( PA^{-1}NP^t\xi^{\ell})_{i+1} -  ( PA^{-1}NP^t\xi^{\ell})_i)^2 \notag \\
& \hspace{3mm} \leq CM\underset{i = 1}{\stackrel{M}{\sum}} \hspace{1mm} (\psi_K'( \eta^{\ell}_{i+1}) -  \psi_K'(\eta^{\ell}_i))^2 + \frac{C}{M}\underset{i = 1}{\stackrel{M}{\sum}} \hspace{1mm} (\xi^{\ell}_{i+1} - \xi^{\ell}_i)^2 \notag \\
& \hspace{3mm} \leq C\langle A_M\nabla \bar{H}(\eta^{\ell}), \nabla \bar{H}(\eta^{\ell}) \rangle + C||\bar{\xi}^{\ell}||_{L^2}^2. \notag
\end{align}
Since $\bar{\xi}^{\ell}$ converges in $L^2$, $||\bar{\xi}^{\ell}||_{L^2}^2$ is bounded. To conclude, we then only require (\ref{eq: integrate H-1 norm}) and the fact that 
\begin{equation}
\langle A_M y, y \rangle \leq C\langle \bar{A}y, y \rangle \hspace{5mm} \forall y \in Y.
\end{equation}
This statement is equivalent to bounding from below $A_M^{-1}$ by $\bar{A}^{-1}$. This does hold, since we have
\begin{align}
\langle \bar{A}^{-1}y, y \rangle_Y &= \frac{1}{N}\langle A^{-1}NP^ty, NP^ty \rangle_X \notag \\
&\leq C||\bar{NP^ty}||_{H^{-1}}^2 \notag \\
&\leq C||\bar{y}||_{H^{-1}}^2 \notag \\ 
&\leq C\langle A_M^{-1}y, y \rangle_Y \notag 
\end{align}
which concludes the proof.
\end{proof}

Finally, we need to prove uniqueness of solutions to the limiting PDE : 

\begin{lemma}
\label{lem: uniqueness}
Given an initial condition $\zeta_0$, there is at most one solution to (\ref{eq: limiting eqn}).
\end{lemma}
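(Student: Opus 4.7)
The plan is to perform an $H^{-1}$ energy estimate on the difference $w = \zeta_1 - \zeta_2$ of two weak solutions with the same initial datum. First observe that integrating \eqref{eq: limiting eqn} over $\T$ shows that the mean of any weak solution is conserved in time, so $w(t,\cdot)$ has mean zero and $\|w(t,\cdot)\|_{H^{-1}}$ is well defined. The regularity $\zeta_i\in L^\infty_t(L^2_\theta)$ and $\partial_t\zeta_i\in L^2_t(H^{-1}_\theta)$ provided by \eqref{eq: regularity of weak sol}, together with Lions--Magenes, yield $w\in C([0,T];H^{-1})$ and the chain-rule identity
\[
\frac{1}{2}\frac{d}{dt}\|w\|_{H^{-1}}^2 = \left\langle w,\partial_t w\right\rangle_{H^{-1}}.
\]

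Next, I would subtract the weak formulation \eqref{eq: weak form} for $\zeta_2$ from the one for $\zeta_1$, and then test with $g=w$, so that $G=G_w$ is the mean-zero primitive of $w$, which satisfies $\|G_w\|_{L^2}=\|w\|_{H^{-1}}$. This gives
\[
\frac{1}{2}\frac{d}{dt}\|w\|_{H^{-1}}^2 = -\int_{\T}w\bigl(\varphi'(\zeta_1)-\varphi'(\zeta_2)\bigr)d\theta + \int_{\T} G_w\bigl(\varphi'(\zeta_1)-\varphi'(\zeta_2)\bigr)d\theta.
\]
The Cramér transform $\varphi$ of $\psi$ inherits from the structural assumption $\psi(x)=\tfrac12 x^2+\delta\psi(x)$ with $\|\delta\psi\|_{C^2}<\infty$ both a uniform lower bound $\varphi''\geq\lambda>0$ (uniform convexity, already used in the paper) and a uniform upper bound $\varphi''\leq L<\infty$ (via the variance formula for the log-moment-generating function). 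Monotonicity of $\varphi'$ bounds the first term by $-\lambda\|w\|_{L^2}^2$, while Cauchy--Schwarz together with the Lipschitz bound $\|\varphi'(\zeta_1)-\varphi'(\zeta_2)\|_{L^2}\leq L\|w\|_{L^2}$ controls the second term by $L\|w\|_{H^{-1}}\|w\|_{L^2}$. Young's inequality with weight $\lambda$ absorbs $\lambda\|w\|_{L^2}^2$ and leaves
\[
\frac{d}{dt}\|w\|_{H^{-1}}^2 \leq \frac{L^2}{2\lambda}\|w\|_{H^{-1}}^2.
\]
Since $w(0,\cdot)=0$, Gronwall's lemma forces $\|w(t,\cdot)\|_{H^{-1}}\equiv 0$, hence $\zeta_1=\zeta_2$ almost everywhere.

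The main obstacle is justifying that $\varphi'$ is globally Lipschitz: although the convex lower bound on $\varphi''$ is already used in the paper, boundedness from above is needed here and should be derived from the fact that, under the Gaussian-type assumption on $\psi$, the variance of $Z(\sigma)^{-1}e^{\sigma x-\psi(x)}dx$ is uniformly bounded below by some $L^{-1}>0$, so that $\varphi''=1/\mathrm{var}\leq L$. Everything else is standard once the weak formulation is tested against $g=w$; the only delicate point of principle is making the chain rule rigorous, which is handled by the Lions--Magenes interpolation argument alluded to above.
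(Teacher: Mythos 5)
Your proof is correct and takes essentially the same route as the paper: an $H^{-1}$ Gronwall estimate on $w=\zeta_1-\zeta_2$, testing the weak form against $w$, using $\inf\varphi''>0$ for the good term, $\sup\varphi''<\infty$ for the Lipschitz bound, and Young's inequality to close. The only additions beyond the paper's argument are the explicit Lions--Magenes justification of the chain rule and the remark on why $\varphi''$ is bounded above, both of which the paper tacitly assumes.
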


\begin{proof}
Let $\zeta_1$ and $\zeta_2$ be two solutions of (\ref{eq: limiting eqn}) with same initial condition. Let $F(t) := 2^{-1}||\zeta_1(t,\cdot) - \zeta_1(t,\cdot) ||_{H^{-1}}^2$, and let let $g_1$ and $g_2$ be mean-zero primitives (in space) of $\zeta_1$ and $\zeta_2$. Then, for any $\lambda > 0$, 
\begin{align}
F'(t) &= -\int_{\T}{(\varphi'(\zeta_1) - \varphi'(\zeta_2))(\zeta_1 - \zeta_2)d\theta} + \int_{\T}{(\varphi'(\zeta_1) - \varphi'(\zeta_2))(g_1 - g_2)d\theta} \notag \\
&\leq -\frac{\inf \varphi''}{2}\int_{\T}{(\zeta_1 - \zeta_2)^2d\theta} + \frac{\lambda}{2}\int_{\T}{(\varphi'(\zeta_1) - \varphi'(\zeta_2))^2d\theta} \notag \\
& \hspace{5mm} + \frac{1}{2\lambda}\int{(g_1 - g_2)^2d\theta} \notag \\
&\leq -\frac{\inf \varphi''}{2}\int_{\T}{(\zeta_1 - \zeta_2)^2d\theta} + \frac{\lambda \sup \varphi''}{2}\int_{\T}{(\zeta_1 - \zeta_2)^2d\theta} + \frac{1}{\lambda}F(t) \notag
\end{align}
Taking $\lambda = \frac{\inf \varphi''}{\sup \varphi''}$, we obtain a differential inequality which, by Gronwall's lemma, implies that $\zeta_1 = \zeta_2$.
\end{proof}

We can now prove Proposition \ref{prop: convergence eta}: 

\begin{proof}[Proof of  Proposition \ref{prop: convergence eta}]
According to Lemma \ref{lem: compactness}, we can consider a subsequence such that 
\[
\bar{\eta}^{\ell}\rightharpoonup \eta_*\quad\text{weak-* in}\quad L^\infty(L^2)=(L^1(L^2))^*. 
\] 
and strongly in $L^{\infty}(H^{-1})$. By Lemma \ref{lem: estimates limit}, $\eta_*$ satisfies (\ref{eq: regularity of weak sol}). 
According to Lemma \ref{lem: variational}, $\eta^{\ell}$ satisfies (\ref{eq: variational form of the macro eqn}). Passing to the limit using Lemma \ref{lem: convergence weak form}, we see that $\eta_*$ satisfies (\ref{eq: variational form of the limiting eqn}), and therefore is a weak solution of~\eqref{eq: limiting eqn}. 

Since Lemma \ref{lem: uniqueness} guarantees uniqueness of the weak solution, the full sequence $(\eta^{\ell})_{\ell}$ converges to the unique weak solution of~\eqref{eq: limiting eqn}. 

\end{proof}

\end{document}